\newcommand{\fE}{\mathcal{E}}
\newcommand{\fS}{\mathcal{S}}
\newcommand{\fT}{\mathcal{T}}
\newcommand{\fR}{\mathcal{R}}
\newcommand{\fI}{\mathcal{I}}
\newcommand{\fJ}{\mathcal{J}}
\newcommand{\C}{\mathbb{C}}
\newcommand{\R}{\mathbb{R}}
\newcommand{\Z}{\mathbb{Z}}
\newcommand{\bbT}{\mathbb{T}}
\newcommand{\supp}{\operatorname{supp}}
\newcommand{\fB}{\mathcal{B}}
\newcommand{\fQ}{\mathcal{Q}}
\newcommand{\dist}{\operatorname{dist}}
\newcommand{\Br}{\operatorname{Br}}
\newcommand{\Bil}{\operatorname{Bil}}
\newcommand{\ldist}{\operatorname{dist_L}}
\newcommand{\diam}{\operatorname{diam}}
\newcommand{\negl}{\operatorname{negligible}}
\def\Xint#1{\mathchoice
   {\XXint\displaystyle\textstyle{#1}}%
   {\XXint\textstyle\scriptstyle{#1}}%
   {\XXint\scriptstyle\scriptscriptstyle{#1}}%
   {\XXint\scriptscriptstyle\scriptscriptstyle{#1}}%
   \!\int}
\def\XXint#1#2#3{{\setbox0=\hbox{$#1{#2#3}{\int}$}
     \vcenter{\hbox{$#2#3$}}\kern-.5\wd0}}
\def\dashint{\Xint-}
\newtheorem{theorem}{Theorem}[section]
\newtheorem{proposition}[theorem]{Proposition}
\newtheorem{lemma}[theorem]{Lemma}
\theoremstyle{definition}
\newtheorem{definition}[theorem]{Definition}
\newtheorem{remark}[theorem]{Remark}
\numberwithin{equation}{section}
\begin{document}
\title[Local extension estimates for the hyperbolic hyperboloid]{Local extension estimates for the hyperbolic hyperboloid in three dimensions}
\address{Department of Mathematics, University of Wisconsin, Madison}
\email{bbruce@math.wisc.edu}
\author{Benjamin Baker Bruce}
\date{\today}
\maketitle

\begin{abstract}
We establish Fourier extension estimates for compact subsets of the hyperbolic hyperboloid in three dimensions via polynomial partitioning.
\end{abstract}

\section{Introduction}
In this article, we establish Fourier extension estimates for compact subsets of the hyperbolic, or one-sheeted, hyperboloid in three dimensions.  This surface may be defined as the set of points $(\tau,\xi) \in \R \times \R^2$ satisfying the relation $\tau^2 = 1 + \xi_1^2-\xi_2^2$.  Setting $\phi(\xi) := \sqrt{1+\xi_1^2-\xi_2^2}$ and $\Omega := \{\xi \in \R^2 : 1+\xi_1^2-\xi_2^2 \geq 0\}$, we will restrict our attention to the graph
\begin{align*}
\Sigma := \{(\phi(\xi),\xi) : \xi \in \Omega\}.
\end{align*}   
We aim to adapt the polynomial partitioning method of Guth \cite{Guth} to obtain extension estimates for a bounded subset of $\Sigma$ near $(1,0)$, which we denote by $\Sigma_1$.  Use of the parabolic scalings $P_r(\tau,\xi) := (r^{-2}\tau,r^{-1}\xi)$ in Guth's argument presents an apparent obstacle here, as hyperboloids are evidently not preserved by such transformations.  To overcome this minor issue, we will simultaneously prove extension estimates for all parabolic rescalings of $\Sigma_1$ with constants uniform in the scaling parameter.  Toward that end, let $U := \{\xi : |\xi| \leq \delta_0/10\}$, where $\delta_0 > 0$ is a small constant to be chosen later, and for each $r \in (0,1]$, let $\phi_r(\xi) := r^{-2}(\phi(r\xi)-1)$ and
\begin{align*}
\Sigma_r := \{(\phi_r(\xi),\xi) : \xi \in U\}.
\end{align*}
Each $\Sigma_r$ is the image of $\Sigma_1 \cap \{(\tau,\xi) : \xi \in rU\}$ under the parabolic scaling $P_r$, and the ``$-1$" in $\phi_r$ just makes $\Sigma_r$ converge to the hyperbolic paraboloid $\Sigma_0 := \{(\frac{1}{2}(\xi_1^2-\xi_2^2),\xi) : \xi \in U\}$ as $r\rightarrow 0$.   We associate to  $\Sigma_r$ the extension operator
\begin{align*}
\fE_r f(t,x) := \int_{U} e^{2\pi i(t,x)\cdot(\phi_r(\xi),\xi)}f(\xi)d\xi.
\end{align*}

\begin{theorem}\label{thm1.1}
If $q > 13/4$ and $p > (q/2)'$, then $\fE_r : L^p(U) \rightarrow L^q(\R^3)$ with operator norm bounded uniformly in $r$.
\end{theorem}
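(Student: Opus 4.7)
The plan is to adapt Guth's polynomial-partitioning proof of the $q > 13/4$ extension estimate for the elliptic paraboloid \cite{Guth} to the family $\{\Sigma_r\}_{r \in [0,1]}$, which is stable under parabolic rescaling in the sense that an affine rescaling of a $K^{-1}$-cap of $\Sigma_r$ is (up to terms that are lower-order at the relevant scale) a subset of some $\Sigma_{r'}$ with $r' \in [0,1]$. This is the whole point of introducing the family. The theorem is proved by induction on the radius $R$ of a ball $B_R \subset \R^3$ on which we seek
\[
\|\fE_r f\|_{L^q(B_R)} \leq C_\epsilon R^\epsilon \|f\|_{L^p(U)},
\]
with $C_\epsilon$ and the exponents uniform in $r \in (0,1]$; here $\epsilon > 0$ is fixed in advance and $R$ ranges over $[1,R_0]$ for any pre-specified $R_0$.

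First I would perform the standard wave-packet decomposition of $\fE_r f$ at scale $R$, writing $\fE_r f = \sum_T \fE_r f_T$ where each tube $T$ has dimensions $R^{1/2} \times R^{1/2} \times R$ and is associated to an $R^{-1/2}$-cap of $U$ and the unit normal of $\Sigma_r$ at its center. Fix an intermediate scale $K = K(\epsilon)$ and split $B_R$ into its $K$-broad and $K$-narrow parts: $x$ is $K$-broad when three pairwise transverse $K^{-1}$-caps of $U$ each contribute at least $K^{-C}|\fE_r f(x)|$, and $K$-narrow otherwise.

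On the narrow set, $|\fE_r f(x)|$ is controlled by the maximum of the single-cap contributions. Each $K^{-1}$-cap $\kappa$ rescales affinely to a ball of radius $R/K^2$ on which $\fE_r(f\mathbf{1}_\kappa)$ becomes a constant multiple of $\fE_{r'} g$ with $r' \in [0,1]$ and $\|g\|_{L^p} = K^{-2/p}\|f\mathbf{1}_\kappa\|_{L^p}$; the uniform-in-$r$ induction hypothesis applied on each cap, combined with $\ell^p$-orthogonality, closes the narrow case exactly as in Guth's argument for all $q > 13/4$ and $p > (q/2)'$. On the broad set, I would invoke polynomial partitioning: pick $P$ of degree $D=D(\epsilon)$ whose zero set splits $B_R$ into $\sim D^3$ cells equidistributing the broad $L^q$-mass, then classify each wave packet as transverse or tangent to $Z(P)$. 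Transverse packets meet only $O(D)$ cells, so the Bennett--Carbery--Tao trilinear restriction estimate on each cell (valid for $\Sigma_r$ uniformly in $r$, since the normals at any three $K$-separated caps remain linearly independent when $\delta_0$ is small) sums to the desired $L^q$-bound. Tangent packets are confined to the $R^{1/2+\delta}$-neighborhood of a two-dimensional algebraic variety; their contribution is handled by a nested induction on $R$ and $D$ using a polynomial-Wolff tangent-tube count.

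The main obstacle, and the reason for the elaborate setup, is maintaining $r$-uniformity throughout the induction. Each geometric input has to be verified with a constant independent of $r$: the transversality of normals at broadly separated caps of $\Sigma_r$; the identification of rescaled caps of $\Sigma_r$ with subsets of $\Sigma_{r'}$ (the affine coordinate change and the remainder in $\phi_r$ both need to be controlled uniformly); and the multilinear restriction and Kakeya-type estimates underlying the broad case. With these in place, the numerology is identical to Guth's and produces the exponent $q > 13/4$ together with the Stein--Tomas-type constraint $p > (q/2)'$.
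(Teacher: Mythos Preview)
Your outline transplants Guth's elliptic argument verbatim, and that is where it breaks.  The surface $\Sigma_r$ has negative Gaussian curvature and is doubly ruled, so the claim that ``the normals at any three $K$-separated caps remain linearly independent when $\delta_0$ is small'' is false: caps strung out along a single ruling can be mutually far apart in $U$ while their normals all lie in a fixed $2$-plane.  Consequently the trilinear Bennett--Carbery--Tao estimate is unavailable on the broad set as you have defined it, and the narrow set as you have defined it (failure of one cap to dominate) does \emph{not} force the Fourier support into a small square cap --- it can live in a long $K^{-1}\times 1$ strip along a ruling, which your single-cap rescaling does not handle.  The paper confronts exactly this by redefining ``narrow'' in terms of domination by rectangles $\rho\in\fR_r$ aligned with the rulings, introducing a Lorentz-invariant separation $\operatorname{dist_L}$ that measures distance \emph{modulo} the rulings, and then running a \emph{bilinear} $L^4$ argument (after Cho--Lee and Lee) in the tangent subcase, with transversality supplied by Lemma~\ref{lem4.4*}(b) rather than by a three-normal span.

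A second, smaller gap is the rescaling in your narrow case.  A $K^{-1}$-cap of $\Sigma_r$ centered at a generic point does not parabolically rescale to any $\Sigma_{r'}$; you must first apply a Lorentz transformation of the ambient hyperboloid to move the cap's center to the origin (this is Lemma~\ref{lem5.1} in the paper, using the maps \eqref{rotations}--\eqref{dilations}).  Only after that does parabolic scaling land you back in the family $\{\Sigma_s\}_{s\in(0,1]}$ with uniform constants.  Without this step the induction-on-scales does not close.
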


\begin{remark}
The bilinear and bilinear-to-linear theories for $\fE_1$ appear in a separate preprint \cite{Bruce--Oliveira e Silva--Stovall} of Stovall, Oliveira e Silva, and the author.  Using the bilinear machinery and Theorem \ref{thm1.1}, boundedness of $\fE_r$ on the parabolic scaling line $p = (q/2)'$ (for $q > 13/4$) can also be proved.  See \cite[Remark 5.2]{Bruce--Oliveira e Silva--Stovall}, as well as \cite{Stovall}, \cite{Tao--Vargas--Vega}, and \cite{Kim} for arguments of this type.   
\end{remark}

Theorem \ref{thm1.1} can be compared to several recent developments in the restriction/extension theory for hyperbolic surfaces in three dimensions.  Cho and Lee \cite{Cho--Lee} generalized Guth's argument in \cite{Guth} to the hyperbolic paraboloid, proving strong type $(p,q)$ extension estimates in the range $q > 13/4$, $p \geq q$.  Later work of Kim \cite{Kim} and Stovall \cite{Stovall} brought those estimates to the scaling line $p =(q/2)'$.  (Letting $r \rightarrow 0$ and applying Fatou's lemma, Theorem \ref{thm1.1} reproves the off-scaling extension estimates for the hyperbolic paraboloid.)  Recently Buschenhenke, M\"uller, and Vargas \cite{Buschenhenke--Muller--Vargas} used polynomial partitioning to obtain extension estimates for a one-variable perturbation of the hyperbolic paraboloid;  Guo and Oh \cite{Guo--Oh} have now obtained similar estimates for general polynomial perturbations. Near the origin, the hyperbolic hyperboloid can be viewed as a (rather special) non-polynomial perturbation of the hyperbolic paraboloid.

The rest of the article is organized as follows:  In Section 2, we adapt the notion of ``broad points" in \cite{Guth} to the hyperbolic hyperboloid, motivating our definition through the geometry of the surface.  In Section 3, we use Kim's argument in \cite{Kim} to reduce Theorem \ref{thm1.1} to Theorem \ref{thm2.2}, an estimate on the contribution to $\fE_r$ from broad points.  Finally, in Section 4, the heart of the article, we prove Theorem \ref{thm2.2} using polynomial partitioning as in \cite{Guth}.

\medskip {\bf Notation and terminology.} As is standard, we write $A \lesssim B$ or $A = O(B)$ if there exists a constant $C > 0$ such that $A \leq CB$.  Generally, an implicit constant is not allowed to depend on any parameters present in the article.  In particular, constants never depend on the parabolic scaling parameter $r$.  There are exceptions: In Section 4, constants may depend on the exponent $\varepsilon$ from Theorems \ref{thm2.2} and \ref{thm2.4}.  To highlight dependence on a parameter $s$, we will sometimes write $\lesssim_s$ in place of $\lesssim$.  Likewise, we write $ c \ll 1$ to mean that $c$ is sufficiently small, and we use subscripts to indicate dependence on  parameters.   A number $\delta$ is dyadic if $\delta = 2^j$ for some $j \in \Z$, and an interval $I$ is dyadic if $I = [k2^j, (k+1)2^j)$ for some $j,k \in \Z$.  If $u,v$ are geometric objects that form an angle, then $\angle(u,v)$ denotes the measure of their angle.  Finally, ``hyperboloid" always means the hyperbolic (one-sheeted) hyperboloid.

\medskip
{\bf Acknowledgments.}  The author is very grateful to Betsy Stovall for her advice.  This project was suggested by Stovall and grew out of joint work with Stovall and Diogo Oliveira e Silva.   The 2019 MSRI Summer Graduate School on the Polynomial Method provided useful discussions during the earliest stage of this project.  The author was supported by NSF grant DMS-1653264.

\medskip
{\bf A revised note on publication.} An earlier version of this manuscript was to be combined with \cite{Guo--Oh}, with only the resulting joint manuscript of Bruce, Guo, and Oh being submitted for publication.  However, due to reasons unforeseen at that time, all authors have now mutually agreed to submit the two manuscripts separately.

\section{Broad points and the geometry of the hyperboloid}
In this section, we adapt the notion of ``broad points" to the hyperboloid.  Informally, given a function $f \in L^\infty(U)$, a point $(t,x) \in \R \times \R^2$ is ``broad" for $\fE_rf$ if there exist small, well-separated squares $\tau_1,\tau_2 \subseteq U$ such that $f\chi_{\tau_1}$ and $f\chi_{\tau_2}$ contribute significantly to $\fE_rf(t,x)$; otherwise $(t,x)$ is ``narrow."  To estimate $\fE_rf$, it suffices to bound the contributions from broad and narrow points separately.  The narrow contribution will be handled by a parabolic rescaling argument, since (morally) its Fourier transform is supported in a small rectangular cap in $\Sigma_r$.  The broad contribution will be handled by polynomial partitioning, using, in particular, some techniques from bilinear restriction theory.  In the latter argument, the precise separation condition imposed on the squares $\tau_1,\tau_2$ will be crucial for ensuring that their lifts to $\Sigma_r$ are appropriately transverse.  Our choice of this condition will be motivated by the geometry of the hyperboloid, which we now describe.

First, the basic symmetries of the hyperboloid are the Lorentz transformations, linear maps on $\R \times \R^2$ that preserve the quadratic form $(\tau,\xi) \mapsto \tau^2 - \xi_1^2 +\xi_2^2$.  Concretely, the spatial rotations
\begin{align}\label{rotations}
R_\omega(\tau,\xi) := (-\omega_2\xi_2+\omega_1\tau, \xi_1, \omega_1\xi_2+\omega_2\tau), \quad\quad \omega \in \mathbb{S}^1,
\end{align}
boosts
\begin{align}\label{boosts}
B_\nu(\tau,\xi) := \big(-\nu\xi_1+\sqrt{1+\nu^2}\tau, \sqrt{1+\nu^2}\xi_1-\nu\tau,\xi_2\big), \quad\quad \nu \in \R,
\end{align}
and dilations
\begin{align}\label{dilations}
D_\lambda(\tau,\xi) := \bigg(\tau, \frac{\lambda+\lambda^{-1}}{2}\xi_1 + \frac{\lambda-\lambda^{-1}}{2}\xi_2, \frac{\lambda-\lambda^{-1}}{2}\xi_1+\frac{\lambda+\lambda^{-1}}{2}\xi_2\bigg), \quad\quad \lambda \in \R,
\end{align}
will be of particular use to us.  We define a measure $d\mu$ on $\Sigma$ by setting
\begin{align}\label{invariant measure}
\int_\Sigma g d\mu := \int_\Omega g(\phi(\xi),\xi)\frac{d\xi}{\phi(\xi)}
\end{align}
for $g$ continuous and compactly supported.  This measure is Lorentz invariant in the following sense:  If $L$ is a Lorentz transformation and $\supp g \subseteq \Sigma$ and $L^{-1}(\supp g)\subseteq \Sigma$, then
\begin{align*}
\int_\Sigma (g \circ L)d\mu = \int_\Sigma g d\mu.
\end{align*}
We also record the following notation for later use.  Given a Lorentz transformation $L$ and $\xi \in \Omega$, let
\begin{align}\label{projection}
\overline{L}(\xi) := \pi(L(\phi(\xi),\xi)),
\end{align}
where $\pi(\tau,\xi) := \xi$ is the projection to the spatial coordinates.  If ${L}(\phi(\xi),\xi) \in \Sigma$ (equivalently, if $e_1\cdot L(\phi(\xi),\xi) \geq 0$), then $\overline{ML}(\xi) = \overline{M}(\overline{L}(\xi))$ for any other Lorentz transformation $M$.  In particular, if $V \subseteq \Omega$ and $L(\phi(\xi),\xi) \in \Sigma$ for $\xi \in V$, then $\overline{L}$ is invertible on $V$ with $\overline{L}^{-1}(\zeta) = \overline{L^{-1}}(\zeta)$ for $\zeta \in \overline{L}(V)$.  

Second, the (hyperbolic) hyperboloid is doubly ruled.   The aforementioned separation condition will be adapted to this structure:  Informally, two small squares $\tau_1,\tau_2 \subseteq U$ will be ``separated" if their lifts to the hyperboloid do not intersect a common ruling.  While the precise version of this condition will be stated in Section 4, we record a few preparatory details here.  The \emph{Lorentz norm} of $(\tau,\xi) \in \R \times \R^2$ is defined as
\begin{align*}
\llbracket(\tau,\xi)\rrbracket := \sqrt{|\tau^2 - \xi_1^2 + \xi_2^2|}.
\end{align*}
It is clearly Lorentz invariant, and if $(\tau,\xi),(\tau',\xi') \in \Sigma$, then $\llbracket(\tau,\xi)-(\tau',\xi')\rrbracket = 0$ if and only if $(\tau,\xi)$ and $(\tau',\xi')$ belong to the same ruling of $\Sigma$.  Indeed, the latter property can be checked by using the formulae
\begin{align}\label{rulings}
\ell_{(\tau,\xi)}^\pm(t) := (\tau,\xi) + t(\xi_1\tau \mp \xi_2, 1+\xi_1^2, \xi_1\xi_2 \pm \tau),
\end{align}
which parametrize the rulings $\ell_{(\tau,\xi)}^\pm \subset \Sigma$ that intersect at $(\tau,\xi) \in \Sigma$.  We also define the \emph{Lorentz separation} of $\xi,\zeta \in \Omega$ as the quantity
\begin{align*}
\operatorname{dist_L}(\xi,\zeta) := \llbracket (\phi(\xi),\xi) - (\phi(\zeta),\zeta)\rrbracket,
\end{align*}
which can be viewed as the ``distance" between $(\phi(\xi),\xi)$ and $(\phi(\zeta),\zeta)$ modulo the rulings of $\Sigma$.  Given this definition, a more accurate rendering of our separation condition would be that $\ldist(\xi,\zeta) \gtrsim 1$ for all $\xi \in \tau_1$ and $\zeta \in \tau_2$.  Near the end of this section, we will prove Lemma \ref{lem4.4*}, which relates $\ldist$ to other (genuine) distances.

Having described the geometry of the hyperboloid, we turn to defining broad points.   Our first step is to divide each surface $\Sigma_r$ into caps that lie above special sets which we call \emph{tiles}.  Consider the map $\Phi : \R^2 \rightarrow \R^2$ given by
\begin{align*}
\Phi(\xi) := \frac{(\xi_1\sqrt{1+\xi_2^2}+\xi_2\sqrt{1+\xi_1^2}, \xi_2-\xi_1)}{\sqrt{1+\xi_1^2}+\sqrt{1+\xi_2^2}}
\end{align*}
and, for each $r \in (0,1]$, let $\Phi_r(\xi) := r^{-1}\Phi(r\xi)$.  Recall the constant $\delta_0$  used to define $U$, and assume henceforth that $\delta_0$ is dyadic.  Given two dyadic numbers $ \delta,\delta' \in (0,\delta_0]$, a \emph{$(\delta,\delta',r)$-tile} is any nonempty set of the form
\begin{align*}
\rho := \Phi_r(I_\delta \times I_{\delta'}) \cap U,
\end{align*}
where $I_\delta$ and $I_{\delta'}$ are dyadic intervals contained in $[-\delta_0,\delta_0)$ of length $\delta$ and $\delta'$, respectively.   We denote the set of $(\delta, \delta',r)$-tiles by $\fT_{\delta,\delta',r}$.  Observe that $\Phi$ is a diffeomorphism near the origin.  (Indeed, $\Phi$ can be viewed as a perturbation of the rotation $\xi \mapsto \frac{1}{2}(\xi_1+\xi_2, \xi_2-\xi_1)$ for $\xi$ small.)  Taking $\delta_0$ sufficiently small, it is straightforward to check that $\|\Phi_r^{-1}\|_{C^1(U)} \leq 5$ uniformly in $r$, and consequently that $U \subseteq \Phi_r([-\delta_0,\delta_0)^2)$ for every $r$.  We also note that for fixed $\delta,\delta',r$, the $(\delta,\delta',r)$-tiles are pairwise disjoint and satisfy
\begin{align*}
U = \bigcup_{\rho \in \fT_{\delta,\delta',r}}\rho.
\end{align*}
Let us briefly mention the geometry underlying these definitions.  The map $\Phi$ was created with the following property in mind:  If $\ell \subset \R^2$ is a vertical or horizontal line that intersects $\Phi_r^{-1}(U)$, then $\Phi_r(\ell)$ is a line that lifts to a ruling in $\Sigma_r$.  Thus, each tile lifts to a quadrilateral (in fact, nearly rectangular) cap bounded by four rulings.  We can think of the collection $\{\fT_{\delta,\delta',r}\}_{\delta,\delta'}$ as a dyadic grid adapted to $\Sigma_r$.  A precise description of the geometry of $\Phi$ will appear in Lemma \ref{lem2.3} at the end of this section. 

Now, let $K \geq \delta_0^{-1}$ be a large dyadic constant.  As suggested above, we will analyze contributions to $\fE_r$ from square-like sets $\tau$.  The $(K^{-1},K^{-1},r)$-tiles will function as these basic pieces.  However, controlling contributions from longer rectangle-like sets will also be essential. (As we will see, a collection of non-separated squares $\tau$ must cluster around a line.) For each dyadic number $\delta \in [K^{-1},\delta_0]$, let
\begin{align*}
\fR_{\delta,r} := \fT_{K^{-1},\delta,r} \cup \fT_{\delta,K^{-1},r}
\end{align*}
and also set
\begin{align*}
\fR_r := \bigcup_{\delta \in [K^{-1},\delta_0]} \fR_{\delta,r}.
\end{align*}
Elements of $\fR_{\delta,r}$ are nearly rectangles of dimensions $K^{-1} \times \delta$ and slope approximately $1$ or $-1$.  We are now ready to define broad points.  Given $f \in L^\infty(U)$ and $\alpha \in (0,1]$, we say that $(t,x) \in \R \times \R^2$ is \emph{$\alpha$-broad for $\fE_rf$} if
\begin{align*}
\max_{\rho \in \fR_r}|\fE_rf_\rho(t,x)| \leq \alpha|\fE_rf(t,x)|,
\end{align*}
where $f_\rho := f\chi_\rho$.  The \emph{$\alpha$-broad part} of $\fE_rf$ is defined as
\begin{align*}
\Br_\alpha\fE_rf(t,x) := \begin{cases} \fE_rf(t,x) &\text{if $(t,x)$ is $\alpha$-broad for } \fE_rf,\\ 0 &\text{otherwise}.\end{cases}
\end{align*}
In the next section, we will reduce Theorem \ref{thm1.1} to the following estimate on the broad part:

\begin{theorem}\label{thm2.2}
For every $0 < \varepsilon \ll 1$, there exists a constant $C_\varepsilon$, depending only on $\varepsilon$, such that if $K = 2^{\lceil \varepsilon^{-10}\rceil}$, then
\begin{align*}
\|\Br_{K^{-\varepsilon}}\fE_rf\|_{L^{13/4}(B_R)} \leq C_\varepsilon R^\varepsilon\|f\|_2^{12/13}\|f\|_\infty^{1/13}
\end{align*}
for all $ r \in (0,1]$, $R \geq 1$, and balls $B_R$ of radius $R$.
\end{theorem}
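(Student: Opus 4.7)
The plan is to run Guth's polynomial partitioning scheme \cite{Guth} in the form adapted by Cho--Lee \cite{Cho--Lee} to the hyperbolic paraboloid, with an outer induction on the radius $R$ (which produces the $R^\varepsilon$ loss) and an inner induction on the dyadic size of $\|f\|_2^2$. After a standard wave packet decomposition of $\fE_r f$ at scale $R^{1/2}$, indexed by $(R^{-1/2},R^{-1/2},r)$-tiles in the frequency variable and by tubes of length $R$ and radius $R^{1/2+\delta}$ in physical space, I would apply the polynomial ham-sandwich theorem to $|\Br_{K^{-\varepsilon}}\fE_r f|^{13/4}\chi_{B_R}$ to produce a partitioning polynomial $P$ of degree $D \sim R^\varepsilon$. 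Writing $B_R$ as a disjoint union of cells of $\R^3\setminus Z(P)$ together with a thin $R^{1/2+\delta}$-neighborhood $W$ of $Z(P)$, one splits according to which part carries the bulk of the $L^{13/4}$ mass.

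In the \emph{cellular case}, each of the $\sim D^3$ cells carries an equal share of the $L^{13/4}$ mass, while each tube enters at most $D+1$ cells, so pigeonholing produces a cell on which $\|f\|_2^2$ is smaller by a factor of $D$; the inner induction hypothesis then closes the case with room to spare. In the \emph{algebraic case}, one separates the tubes meeting $W$ into those \textbf{transverse} to $Z(P)$ (making angle $\gtrsim R^{-1/2+\delta}$ with the tangent plane at every nearby point) and those \textbf{tangential}. The transverse sub-case is handled as in \cite{Guth}: each such tube meets $W$ in $\lesssim D$ essentially disjoint pieces, so $W$ can be partitioned into $\sim D$ slabs each containing a proportionally smaller set of tubes, and induction on $R$ closes this sub-case.

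The main obstacle is the \emph{tangential} sub-case, which is where the ruled geometry of $\Sigma_r$ must be exploited. Tangential tubes concentrate in an $R^{1/2+\delta}$-neighborhood of a two-dimensional algebraic subset of $B_R$, and their underlying caps in $U$ cluster along a curve. The crux is to show that any two tangential caps $\tau_1,\tau_2$ whose lifts fail to satisfy $\ldist(\xi_1,\xi_2) \gtrsim 1$ for some $\xi_i \in \tau_i$ must be contained in a common element of the rectangular family $\fR_r$; this is the geometric content of Lemma \ref{lem4.4*} together with the fact, encoded by $\Phi_r$, that the only lines on $\Sigma_r$ come from the two families of rulings, parametrized in $U$ by horizontal or vertical lines through $\Phi_r^{-1}$. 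Once this is in place, the broadness hypothesis $\Br_{K^{-\varepsilon}}$ forces at most $K^\varepsilon$ tangential caps to contribute appreciably at any broad point, and the bilinear extension estimate for $\fE_r$ between Lorentz-separated sets from \cite{Bruce--Oliveira e Silva--Stovall} converts transversality into a quantitative $L^{13/4}$ bound on the tangential piece.

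Finally, one optimizes the parameters: choosing $\delta \ll \varepsilon$, $D \sim R^\varepsilon$, and $K = 2^{\lceil \varepsilon^{-10}\rceil}$ as in the statement so that all losses of the form $D^{O(1)}$, $K^{O(\varepsilon)}$, and $R^{O(\delta)}$ are absorbed into a single $R^\varepsilon C_\varepsilon$, and the double induction closes. The uniformity in $r$ is automatic throughout, since the tile families $\fT_{\delta,\delta',r}$, the Lorentz separation, and the bilinear estimates have all been formulated to be invariant under (and quantitatively uniform in) the parabolic rescaling $P_r$.
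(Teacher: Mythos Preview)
Your overall architecture is right, but there are two genuine gaps.

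\textbf{The inductive statement must be strengthened.} After restricting to a cell $O_i'$ or to the transverse tubes $\bbT_B^\sharp$, the truncated pieces $f_{\tau,i}$ (resp.\ $f_{\tau,B}^\sharp$) are supported not in $\tau$ but in a slight enlargement $2m\tau$, and the broadness parameter degrades from $\alpha$ to $2\alpha$ (resp.\ $10\alpha$). Moreover, in the algebraic case one must sum over all subsets $\fI\subseteq\fS_r$ (this comes out of the broad-to-bilinear reduction). If you induct directly on the statement of Theorem~\ref{thm2.2}, none of these objects satisfy the hypotheses of the inductive step. The paper therefore proves a more flexible statement (Theorem~\ref{thm2.4}) carrying an overlap parameter $m$, a variable broadness level $\alpha\in[K^{-\varepsilon},1]$, and an $R^{\varepsilon^6\log(K^\varepsilon\alpha m^2)}$ factor in the conclusion that absorbs the degradation of $m$ and $\alpha$. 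Without this, your double induction will not close. Relatedly, $D\sim R^\varepsilon$ is too large; the paper takes $D=R^{\varepsilon^4}$ and $\delta=\varepsilon^2$ so that all $D^{O(1)}$ and $R^{O(\delta)}$ losses are beaten by the gains $R^{-\varepsilon^3}$ or $R^{-2\varepsilon^5}$ in each case.

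\textbf{The tangential subcase cannot be closed by citing the bilinear estimate from \cite{Bruce--Oliveira e Silva--Stovall}.} That reference supplies a three-dimensional bilinear restriction estimate for Lorentz-separated caps, which by itself does not reach $L^{13/4}$ on the tangential piece. The mechanism that produces $13/4$ is different: the directions of tubes in $\bbT_B^\flat$ are constrained to an $O(R^{2\delta-1/2})$-neighborhood of a single plane (by the tangency condition and the fact that all such tubes pass through $Q$), so on each $R^{1/2}$-cube the problem is essentially two-dimensional. The paper exploits this via a direct $L^4$ bilinear computation (Lemma~\ref{lem4.8}): one expands $\int_Q|\fE_rf_{\tau_1,B}^\flat|^2|\fE_rf_{\tau_2,B}^\flat|^2$, and the coplanarity forces the resonant quadruples $(\xi,\bar\xi,\zeta,\bar\zeta)$ to lie in the intersection of the level set $Z=\{\Psi=0\}$ with the curve $\tilde\ell=(\nabla\phi_r)^{-1}(\ell)$. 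It is precisely part~(b) of Lemma~\ref{lem4.4*}, i.e.\ $\ldist(\xi,\zeta)^2\sim|\langle(\nabla^2\phi)^{-1}(\nabla\phi(\xi)-\nabla\phi(\zeta)),\nabla\phi(\xi)-\nabla\phi(\zeta)\rangle|$, that makes $Z$ and $\tilde\ell$ transverse when $\tau_1,\tau_2$ are Lorentz-separated. This yields an $L^4$ bound with an $R^{-1/2}$ gain; interpolating with the trivial $L^2$ bound and then invoking Guth's cap-counting (at most $R^{1/2+O(\delta)}$ caps $\theta$ carry tangential tubes, Lemma~\ref{lem4.3}(b)) together with hypothesis~\eqref{ex2.1} gives exactly $p=13/4$. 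Your sentence ``the broadness hypothesis forces at most $K^\varepsilon$ tangential caps to contribute'' is also not what happens: broadness is used (Lemma~\ref{lem4.5}) to show that either two \emph{separated} tangential caps both contribute---yielding the bilinear term $\Bil(\fE_rf_B^\flat)$---or the transverse part $\fE_r\overline{f}_{\fI,B}^\sharp$ dominates, for some $\fI$. It does not bound the number of contributing caps.
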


To conclude this section, we present two geometric lemmas.  We will need the following notation:  For $\xi \in \Omega$, let $\ell_\xi^\pm$ denote the lines in $\R^2$ parametrized by
\begin{align}\label{projections}
\ell_\xi^\pm(t) := \xi + t(1 + \xi_1^2, \xi_1\xi_2 \pm \phi(\xi)).
\end{align}
Geometrically, $\ell_\xi^\pm$ are the projections to the spatial coordinates of the lines $\ell_{(\phi(\xi),\xi)}^\pm$ defined in \eqref{rulings}.  

\begin{lemma}\label{lem4.4*}
For all $\xi,\zeta \in U$, we have
\begin{itemize}
\item[\textnormal{(a)}]{$\dist(\xi, \ell_\zeta^+ \cup \ell_\zeta^-) \lesssim  \ldist(\xi,\zeta) \lesssim |\xi-\zeta|$;}
\item[\textnormal{(b)}]{$\ldist(\xi,\zeta)^2 \sim |\langle(\nabla^2\phi(\xi))^{-1}(\nabla\phi(\xi)-\nabla\phi(\zeta)),\nabla\phi(\xi)-\nabla\phi(\zeta)\rangle|$.}
\end{itemize}
\end{lemma}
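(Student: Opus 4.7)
The approach is to reduce both parts to direct computation with the auxiliary quantity
\begin{align*}
\psi(\xi,\zeta) := (\phi(\xi)-\phi(\zeta))^2 - (\xi_1-\zeta_1)^2 + (\xi_2-\zeta_2)^2,
\end{align*}
so that $\ldist(\xi,\zeta)^2 = |\psi(\xi,\zeta)|$. Using $\phi^2 = 1+\xi_1^2-\xi_2^2$ one rewrites $\psi = 2(\beta-\phi(\xi)\phi(\zeta))$, where $\beta := 1+\xi_1\zeta_1-\xi_2\zeta_2$. Multiplying and dividing by the conjugate $\beta+\phi(\xi)\phi(\zeta)$ (which equals $2+O(\delta_0^2)$ on $U$) and expanding $\beta^2-\phi(\xi)^2\phi(\zeta)^2$ yields the alternative form
\begin{align*}
\psi(\xi,\zeta) = \frac{-2\bigl[(\xi_1-\zeta_1)^2 - (\xi_2-\zeta_2)^2 - (\xi_1\zeta_2-\xi_2\zeta_1)^2\bigr]}{\beta+\phi(\xi)\phi(\zeta)}.
\end{align*}

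For (a), the upper bound is immediate from either formula, since $|\phi(\xi)-\phi(\zeta)|\lesssim|\xi-\zeta|$ on $U$ forces $|\psi|\lesssim|\xi-\zeta|^2$. For the lower bound I decompose $\xi-\zeta = t_+ v_\zeta^+ + t_- v_\zeta^-$, where $v_\zeta^\pm := (1+\zeta_1^2,\zeta_1\zeta_2\pm\phi(\zeta))$ are the direction vectors of the lines $\ell_\zeta^\pm$ from \eqref{projections}. For $\zeta$ small these have angle bounded away from $0,\pi$, so $\dist(\xi,\ell_\zeta^\pm)\sim|t_\mp|$. Substituting the resulting expressions for $\xi_i-\zeta_i$ (and $\xi_1\zeta_2-\xi_2\zeta_1$) into the numerator above and using $\phi(\zeta)^2 = 1+\zeta_1^2-\zeta_2^2$, the pure $t_+^2$ and $t_-^2$ contributions cancel (geometrically, because $v_\zeta^\pm$ are null for $\nabla^2\phi(\zeta)$), leaving $\psi = -8t_+ t_-(1+\zeta_1^2)\phi(\zeta)^2/(\beta+\phi(\xi)\phi(\zeta))$. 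Thus $\ldist(\xi,\zeta)^2\sim|t_+t_-|$, and the AM--GM inequality gives $\dist(\xi,\ell_\zeta^+\cup\ell_\zeta^-)\sim\min(|t_+|,|t_-|)\leq\sqrt{|t_+ t_-|}\sim\ldist(\xi,\zeta)$.

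For (b), the plan is to establish the exact identity
\begin{align*}
&\langle(\nabla^2\phi(\xi))^{-1}(\nabla\phi(\xi)-\nabla\phi(\zeta)),\nabla\phi(\xi)-\nabla\phi(\zeta)\rangle \\
&\qquad= -\frac{\phi(\xi)\bigl(\phi(\xi)\phi(\zeta)+1-\xi_1\zeta_1+\xi_2\zeta_2\bigr)}{2\phi(\zeta)^2}\,\psi(\xi,\zeta),
\end{align*}
after which the conclusion follows because the prefactor is $1+O(\delta_0^2)$ on $U$. From $\det\nabla^2\phi(\xi) = -\phi(\xi)^{-4}$ one finds that $(\nabla^2\phi(\xi))^{-1}$ is $\phi(\xi)$ times the symmetric matrix with entries $1+\xi_1^2,\xi_1\xi_2,\xi_1\xi_2,\xi_2^2-1$; and $\nabla\phi(\xi)-\nabla\phi(\zeta) = (\xi_1\phi(\zeta)-\zeta_1\phi(\xi),\,-\xi_2\phi(\zeta)+\zeta_2\phi(\xi))/(\phi(\xi)\phi(\zeta))$. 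Expanding the quadratic form and then systematically substituting $\phi^2 = 1+\xi_1^2-\xi_2^2$ for both $\xi$ and $\zeta$ should collapse the expression to $\phi(\xi)^2\bigl[(\phi(\xi)\phi(\zeta)-\xi_1\zeta_1+\xi_2\zeta_2)^2-1\bigr]$ divided by $\phi(\xi)\phi(\zeta)^2$; factoring as $(X-1)(X+1)$ with $X := \phi(\xi)\phi(\zeta)-\xi_1\zeta_1+\xi_2\zeta_2$ and recognizing $X-1 = \phi(\xi)\phi(\zeta)-\beta = -\psi/2$ yields the identity.

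The main obstacle is simply the algebraic bookkeeping in (b), which involves several lines of expansion that collapse via repeated application of $\phi^2 = 1+\xi_1^2-\xi_2^2$. The structural reason that such a clean factorization exists is that both sides of (a) and (b) vanish precisely when $(\phi(\xi),\xi)$ and $(\phi(\zeta),\zeta)$ lie on a common ruling of $\Sigma$, so each side must factor through $\psi(\xi,\zeta)$.
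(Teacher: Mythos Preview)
Your argument is correct. For part (b) your computation is essentially the paper's: both expand the quadratic form, use $\phi^2=1+\xi_1^2-\xi_2^2$ repeatedly, and arrive at the factorization $\tfrac{\phi(\xi)}{\phi(\zeta)^2}(X^2-1)$ with $X=\phi(\xi)\phi(\zeta)-\xi_1\zeta_1+\xi_2\zeta_2$, together with $\ldist(\xi,\zeta)^2=2|1-X|$. The organization differs only in notation.

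For part (a), however, your route is genuinely different. The paper applies a Lorentz transformation $L=B_\nu R_\omega$ sending $(\phi(\zeta),\zeta)$ to $(1,0)$, so that $\ell_\zeta^\pm$ become $\R(1,\pm1)$; it then bounds $\dist(\xi,\ell_\zeta^\pm)$ by $|\eta_1\mp\eta_2|$ for $\eta=\overline{L}(\xi)$ and recovers $\ldist$ via Lorentz invariance of the Lorentz norm. You instead stay at $\zeta$ and decompose $\xi-\zeta$ in the ruling-direction basis $v_\zeta^\pm$, then substitute into your rationalized formula for $\psi$ to obtain the exact identity $\psi=-8t_+t_-(1+\zeta_1^2)\phi(\zeta)^2/(\beta+\phi(\xi)\phi(\zeta))$. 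Your approach is more elementary---it avoids the Lorentz machinery entirely and yields a precise closed form---while the paper's approach makes the underlying symmetry transparent and explains \emph{why} the computation should work (everything reduces to the trivial case $\zeta=0$). Both are clean; yours is arguably sharper, since $\ldist(\xi,\zeta)^2\sim|t_+t_-|$ gives two-sided control rather than just the one-sided inequality stated in (a).
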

\begin{proof}
(a) Let $\xi'$ be the intersection of $\ell_\xi^-$ and $\ell_\zeta^+$.  An easy calculation shows that $\angle(\ell_\eta^+,\ell_{\eta'}^-) \gtrsim 1$ for all $\eta,\eta' \in U$.  (In fact, the lines are nearly orthogonal.)  In particular, the law of sines implies that $\xi' \in CU$ for some constant $C$.  Let $L := B_\nu R_\omega$, as defined in \eqref{rotations} and \eqref{boosts}, with
\begin{align*}
\nu &:= \zeta_1,\\
\omega &:= \bigg(\frac{\phi(\zeta)}{\sqrt{1+\zeta_1^2}}, -\frac{\zeta_2}{\sqrt{1+\zeta_1^2}}\bigg).
\end{align*} 
Then $L(\phi(\zeta),\zeta) = (1,0)$.  Let $\eta := \overline{L}(\xi)$ and $\eta' := \overline{L}(\xi')$, using the notation from \eqref{projection}.  Since $\nu$ and $\omega_2$ are very small, $\overline{L}$ is essentially a perturbation of the identity.  It is easy to check that $L(\phi(\xi),\xi) \in \Sigma$ for all $\xi \in CU$, provided $\delta_0$ is sufficiently small, and thus $\overline{L}$ is invertible on $CU$.  Additionally, we have the bound $\|\overline{L}^{-1}\|_{C^1(\overline{L}(CU))} \lesssim 1$.  Combining these facts, we see that
\begin{align}\label{ex2.7*}
\dist(\xi,\ell_\zeta^+) \leq |\xi - \xi'| \lesssim |\eta-\eta'|.
\end{align}
Since $L$ preserves the hyperboloid and is linear, it must permute the rulings of the surface.  Therefore, since $L$ is close to the identity,
\begin{align*}
\overline{L}(\ell_\zeta^+) &= \ell_0^+ = \R(1,1),\\
\overline{L}(\ell_\xi^-) &= \ell_\eta^-,
\end{align*}
which implies that $\{\eta'\} = \R(1,1) \cap \ell_\eta^-$.  Then, since $\eta \in \ell_\eta^-$ and $\angle(\R(1,1),\ell_\eta^-) \gtrsim 1$, it follows that $|\eta -\eta'| \lesssim \dist(\eta,\R(1,1))$.  Thus, by \eqref{ex2.7*}, we have $\dist(\xi,\ell_\zeta^+) \lesssim \dist(\eta,\R(1,1)) \leq |\eta_1-\eta_2|$.  A similar argument shows that $\dist(\xi,\ell_\zeta^-) \lesssim |\eta_1+\eta_2|$.  Hence,
\begin{align*}
\dist(\xi,\ell_\zeta^+ \cup \ell_\zeta^-) &= \min\{\dist(\xi,\ell_\zeta^+),\dist(\xi,\ell_\zeta^-)\}\\
&\leq \sqrt{|\eta_1^2 - \eta_2^2|}\\
&\lesssim \sqrt{|1-\phi(\eta)|}\\
&\sim\llbracket (\phi(\eta), \eta) - (1,0)\rrbracket\\
&= \ldist(\xi,\zeta),
\end{align*}
where the last step used the Lorentz invariance of the Lorentz norm.  The second inequality in (a) can be proved in a similar (but easier) fashion.  (It also follows from part (b), using the Cauchy--Schwarz inequality and bounds on the derivatives of $\phi$.)

(b) A straightforward computation shows that the right-hand side of (b) is
\begin{align*}
\frac{1}{\phi(\xi)\phi(\zeta)^2}|(1+\xi_1^2)(\xi_1\phi(\zeta) - \zeta_1\phi(\xi))^2 + 2\xi_1\xi_2(-\xi_2\phi(\zeta) &+ \zeta_2\phi(\xi))(\xi_1\phi(\zeta) - \zeta_1\phi(\xi))\\ &\hspace*{.23 in}+ (-1+\xi_2^2)(-\xi_2\phi(\zeta) + \zeta_2\phi(\xi))^2|.
\end{align*}
The expression inside absolute value signs is equal to
\begin{align*}
\phi(\xi)^2[(1+\xi_1^2)\zeta_1^2 - 2\xi_1\xi_2\zeta_1\zeta_2+(-1+\xi_2^2)\zeta_2^2]\\
 &\hspace{-1.1 in}+ \phi(\xi)\phi(\zeta)[-2(1+\xi_1^2)\xi_1\zeta_1 + 2\xi_1\xi_2(\xi_2\zeta_1+\xi_1\zeta_2)-2(-1+\xi_2^2)\xi_2\zeta_2]\\ &\hspace{-1.1 in}+ \phi(\zeta)^2[(1+\xi_1^2)\xi_1^2 - 2\xi_1^2\xi_2^2 + (-1+\xi_2^2)\xi_2^2],
\end{align*}
which, by the relations $\phi(\xi)^2 = 1 + \xi_1^2 - \xi_2^2$ and $\phi(\zeta)^2 = 1 +\zeta_1^2 - \zeta_2^2$, simplifies to
\begin{align*}
\phi(\xi)^2[(\xi_1\zeta_1-\xi_2\zeta_2)^2+\phi(\zeta)^2-1] + 2\phi(\xi)\phi(\zeta)[-\xi_1\zeta_1+\xi_2\zeta_2] + \phi(\zeta)^2\phi(\xi)^2[\phi(\xi)^2-1].
\end{align*}
Thus, by a bit more algebra, the right-hand side of (b) is
\begin{align*}
\frac{\phi(\xi)}{\phi(\zeta)^2}|1+\xi_1\zeta_1-\xi_2\zeta_2-\phi(\xi)\phi(\zeta)||\xi_1\zeta_1-\xi_2\zeta_2-\phi(\xi)\phi(\zeta)-1|.
\end{align*}
We also compute that
\begin{align*}
\ldist(\xi,\zeta)^2 = 2|1+\xi_1\zeta_1-\xi_2\zeta_2-\phi(\xi)\phi(\zeta)|,
\end{align*}
and (b) follows.
\end{proof}

Let us briefly interpret the lemma.  Part (a) says that points with small Lorentz separation lie near a common line, while points with large Lorentz separation are genuinely separated.  Part (b) relates to a transversality condition that naturally arises in bilinear restriction theory (see \cite[Theorem 1.1]{Lee}).  Crucially, whenever $\xi$ and $\zeta$ belong to separated squares (as discussed above), the right-hand side of (b) will be bounded below.

\begin{lemma}\label{lem2.3}
If $\xi \in U$ and  $\zeta = \Phi^{-1}(\xi)$, then $\ell_\xi^+ \cap 2U = \Phi(\{\zeta_1\} \times \R) \cap 2U$ and $\ell_\xi^- \cap 2U = \Phi(\R \times \{\zeta_2\}) \cap 2U$.
\end{lemma}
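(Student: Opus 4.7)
The plan is to pass to hyperbolic coordinates $\xi_1 = \sinh\alpha$, $\xi_2 = \sinh\beta$, in which $\Phi$ and its lift to $\Sigma$ simplify dramatically, and then verify that fixing $\alpha$ (respectively $\beta$) produces an affine line whose lift is a ruling. To carry this out, I first invoke the identities $\cosh\alpha + \cosh\beta = 2\cosh(\tfrac{\alpha+\beta}{2})\cosh(\tfrac{\alpha-\beta}{2})$, $\sinh(\alpha+\beta) = \sinh\alpha\cosh\beta + \cosh\alpha\sinh\beta$, $\sinh\beta-\sinh\alpha = 2\cosh(\tfrac{\alpha+\beta}{2})\sinh(\tfrac{\beta-\alpha}{2})$, and $1+\cosh(\alpha+\beta) = 2\cosh^2(\tfrac{\alpha+\beta}{2})$. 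Setting $s = \tfrac{\alpha+\beta}{2}$, $t = \tfrac{\alpha-\beta}{2}$, these collapse $\Phi$ and its lift to
\begin{align*}
\Phi(\sinh\alpha,\sinh\beta) = \bigl(\tfrac{\sinh s}{\cosh t},-\tanh t\bigr),\qquad \phi(\Phi(\sinh\alpha,\sinh\beta)) = \tfrac{\cosh s}{\cosh t}.
\end{align*}

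Next, I fix $\alpha$ (so $\zeta_1 = \sinh\alpha$) and let $\beta$ vary, equivalently let $t = \alpha - s$ vary. Applying $\cosh(\alpha-u) = \cosh\alpha\cosh u - \sinh\alpha\sinh u$, $\sinh(\alpha-u) = \sinh\alpha\cosh u - \cosh\alpha\sinh u$ and dividing by $\cosh t$ rewrites the lift in terms of the new parameter $T := \tanh t$ as
\begin{align*}
(\phi(\Phi(\zeta)),\Phi(\zeta)) = (\cosh\alpha,\sinh\alpha,0) + T(-\sinh\alpha,-\cosh\alpha,-1),
\end{align*}
which is an affine line in $\Sigma$, hence a ruling. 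As $\beta$ ranges over $\R$, $T$ ranges over $(-1,1)$; but since $2U \subseteq \{|\xi_2|\ll 1\}$ and $T = -\xi_2$ along this line, the parametrization already covers the full intersection with $2U$. Projecting to $\R^2$, $\Phi(\{\zeta_1\}\times\R)\cap 2U$ equals an affine line segment through $\xi = \Phi(\zeta)$ of direction $(-\cosh\alpha,-1)$.

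It remains to identify this line with $\ell_\xi^+$ rather than $\ell_\xi^-$. Using the formulas from the first step, at $\xi=\Phi(\zeta)$ I compute
\begin{align*}
\xi_1\xi_2 + \phi(\xi) = \tfrac{\cosh s\cosh t - \sinh s\sinh t}{\cosh^2 t} = \tfrac{\cosh\beta}{\cosh^2 t},\qquad 1 + \xi_1^2 = \tfrac{\cosh^2 s + \sinh^2 t}{\cosh^2 t} = \tfrac{\cosh\alpha\cosh\beta}{\cosh^2 t},
\end{align*}
where the last equality uses the product-to-sum identity $\cosh\alpha\cosh\beta = \cosh(s+t)\cosh(s-t) = \tfrac12(\cosh 2s + \cosh 2t) = \cosh^2 s + \sinh^2 t$. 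Thus the direction $(1+\xi_1^2,\xi_1\xi_2+\phi(\xi))$ of $\ell_\xi^+$ is the positive multiple $(\cosh\beta/\cosh^2 t)(\cosh\alpha,1)$, parallel to the projected direction from the previous step. Since both lines pass through $\xi$, they coincide, giving the first equality. The second equality is symmetric: fixing $\beta$ and varying $\alpha$ yields the affine line through $\xi$ of direction $(\cosh\beta,-1)$, which agrees with $(1+\xi_1^2,\xi_1\xi_2-\phi(\xi))$ by the analogous computation. The main technical point is simply identifying the product-to-sum identity linking $\cosh\alpha\cosh\beta$ to $(s,t)$ and tracking signs so as to pair vertical lines with $\ell^+$ and horizontal lines with $\ell^-$.
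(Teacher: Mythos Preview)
Your proof is correct and takes a genuinely different route from the paper's. The paper argues geometrically: it first records that $\{\Phi(\zeta)\}=\ell_{(\zeta_1,0)}^+\cap\ell_{(\zeta_2,0)}^-$ (a direct check), then uses the fact that no three rulings of the hyperboloid meet at a common point to deduce $\ell_{\xi'}^+=\ell_\xi^+$ whenever $\xi'\in\ell_\xi^+$, and finally chases inclusions. Your argument instead introduces the hyperbolic substitution $\zeta=(\sinh\alpha,\sinh\beta)$, which collapses $\Phi$ to the closed form $(\sinh s/\cosh t,-\tanh t)$ in the variables $s=\tfrac{\alpha+\beta}{2}$, $t=\tfrac{\alpha-\beta}{2}$; fixing $\alpha$ or $\beta$ then visibly traces an affine line on $\Sigma$, and a direction computation identifies it with $\ell_\xi^+$ or $\ell_\xi^-$. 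The paper's approach is softer and explains the \emph{origin} of $\Phi$ as the intersection map of rulings through axis points; yours is purely computational but fully explicit, avoids any appeal to ruling uniqueness, and produces closed-form expressions for the lift that could be useful elsewhere. Your coverage argument ($T=-\xi_2$, so $|T|\ll 1$ on $2U$) is clean and handles the $\cap\,2U$ issue that the paper addresses via the double inclusion.
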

\begin{proof}
We will only prove the first equality; the second follows in a similar manner.  The proof rests on two claims.

\emph{Claim 1. If $|\xi|,|\xi'| \leq 1/2$ and $\xi' \in \ell_\xi^+$, then $\ell_{\xi'}^+ = \ell_\xi^+$.}  Consider the lines $\ell_\xi^+$, $\ell_{\xi'}^+$, and $\ell_{\xi'}^-$.  Each one contains $\xi'$ and lifts to a ruling in $\Sigma$.  By elementary geometry, no three rulings of the hyperboloid intersect at a common point.  Thus, two of these lines are identical.  Since $\ell_{\xi'}^+ \neq \ell_{\xi'}^-$ and $\ell_{\xi}^+ \neq \ell_{\xi'}^-$, as is easy to check, we must have $\ell_{\xi'}^+ = \ell_\xi^+$.

\emph{Claim 2. For every $\xi \in \R^2$, we have $\{\Phi(\xi)\} = \ell_{(\xi_1,0)}^+ \cap \ell_{(\xi_2,0)}^-$.}  This relation can be checked directly, using \eqref{projections}.  It is helpful to reparametrize \eqref{projections} so that the second coordinates of $\ell_{(\xi_1,0)}^+(t)$ and $\ell_{(\xi_2,0)}^-(t)$ are identically $t$ and $-t$, respectively.

Now, fix $\xi \in U$ and let $\zeta := \Phi^{-1}(\xi)$.  Let $\xi' \in \ell_\xi^+ \cap 2U$ and $\zeta' := \Phi^{-1}(\xi')$.  Claim 2 implies that $\xi \in \ell_{(\zeta_1,0)}^+$ and $\xi' \in \ell_{(\zeta_1',0)}^+$.  Hence, by claim 1, we have $\ell_{(\zeta_1,0)}^+ = \ell_\xi^+ = \ell_{\xi'}^+ = \ell_{(\zeta_1',0)}^+$, and it follows that $\zeta_1 = \zeta_1'$.  Since $\xi'$ was arbitrary, we conclude that $\ell_\xi^+ \cap 2U \subseteq \Phi(\{\zeta_1\} \times \R) \cap U$.  The other direction is similar:  Let $\xi' \in \Phi(\{\zeta_1\} \times \R) \cap 2U$, so that $\xi' = \Phi(\zeta_1,t)$ with $(\zeta_1,t) \in \Phi^{-1}(2U) \subseteq \Omega$.  Claim 2 implies that $\xi,\xi' \in \ell_{(\zeta_1,0)}^+$.  Hence, $\xi' \in \ell_\xi^+$ by claim 1, and it follows that $\Phi(\{\zeta_1\} \times \R) \cap 2U \subseteq \ell_\xi^+ \cap 2U$.
\end{proof}

\section{Reduction to Theorem \ref{thm2.2}}

In this section, we adapt the argument of Kim in \cite{Kim} to show that Theorem \ref{thm2.2} implies Theorem \ref{thm1.1}.  The following parabolic rescaling lemma is the main tool required for this reduction.

\begin{lemma}\label{lem5.1}
Let $r \in (0,1]$ be dyadic and let $\theta \in [0,1]$.  If $\|\fE_s g\|_{L^q(B_{R/2})} \leq M\|g\|_2^{1-\theta}\|g\|_\infty^\theta$ for all $s \in (0,1]$, balls $B_{R/2}$ of radius $R/2$, and $g \in L^\infty(U)$, then there exists an absolute constant $C$ such that $\|\fE_r h\|_{L^q(B_R)} \leq CM(\delta\delta')^{\frac{1+\theta}{2} -\frac{2}{q}}\|h\|_2^{1-\theta}\|h\|_\infty^\theta$ for all bounded $h$ supported in $\rho \in \fT_{\delta,\delta',r}$, provided $\delta,\delta'$ are sufficiently small.
\end{lemma}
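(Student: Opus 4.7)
The plan is to prove Lemma \ref{lem5.1} by a change of variables that transforms the $\fE_r$-estimate on a tile $\rho \in \fT_{\delta,\delta',r}$ into an $\fE_s$-estimate on $U$ for an appropriate $s \in (0,1]$, then invoke the hypothesis. Without loss of generality $\delta \leq \delta'$. The change of variables composes three pieces: (a) unwind the parabolic rescaling defining $\Sigma_r$, so we work on $\Sigma$ itself; (b) apply a Lorentz transformation $L$ carrying the lifted tile to a near-square region of side $\sim r\sqrt{\delta\delta'}$ near $(1,0,0)$; (c) apply a fresh parabolic rescaling at scale $s \sim r\sqrt{\delta\delta'}$ to land on $\Sigma_s$. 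The Lorentz step is where the hyperboloid specifically enters, and it uses the dilations $D_\lambda$ in a crucial way.

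\emph{Execution.} In (a), the substitution $\zeta = r\xi$ converts $\fE_r h$ into (a modulation of) the extension operator $\tilde{\fE}$ for $\Sigma$ applied to $\tilde h(\zeta) := h(r^{-1}\zeta)$, supported on $r\rho$ of dimensions $\sim r\delta \times r\delta'$ in the $\Phi$-coordinates. In (b), choose $L = D_\lambda B_\nu R_\omega$ with $\lambda = \sqrt{\delta'/\delta}$, where $B_\nu R_\omega$ is chosen as in the proof of Lemma \ref{lem4.4*} to send a chosen corner of the cap above $r\rho$ to $(1,0,0)$. In the $\Phi$-coordinates near the origin, $B_\nu R_\omega$ acts as a near-translation while $D_\lambda$ acts as $(u_1, u_2) \mapsto (\lambda u_1, \lambda^{-1} u_2)$ (by \eqref{dilations} together with Lemma \ref{lem2.3}); since Lorentz transformations permute the rulings of $\Sigma$, $\overline{L}(r\rho)$ is a near-square of side $\sim r\sqrt{\delta\delta'}$ near the origin. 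Linearity of $L$ gives the identity $\tilde{\fE}\tilde h(t', x') = \tilde{\fE}\tilde g(L^{-T}(t', x'))$, where $\tilde g(\eta) := \tilde h(\overline{L^{-1}}(\eta)) \cdot |J_{\overline{L^{-1}}}(\eta)|$ has Jacobian factor bounded above and below. In (c), choose $s \sim r\sqrt{\delta\delta'} \in (0,1]$ and reverse the substitution of (a): the operator $\tilde{\fE}\tilde g$ becomes $\fE_s g$ applied to $g(\eta) := \tilde g(s\eta)$, supported in $U$ (provided $\delta,\delta'$ are small enough that $s^{-1}\overline{L}(r\rho) \subset U$). Composing,
\[
\fE_r h(t, x) = (\delta\delta')\, e^{i\Psi(t,x)}\, \fE_s g\bigl(T^*(t, x)\bigr),
\]
where $\Psi$ is linear, and $T^*$ is a composite linear map on $(t, x)$-space with operator norm $O(\sqrt{\delta\delta'})$ and $|\det T^*| \sim (\delta\delta')^2$; in particular $T^*(B_R) \subseteq B_{R/2}$ for $\delta,\delta'$ sufficiently small.

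\emph{Bookkeeping.} A direct change of variable in the $L^q$-norm gives
\[
\|\fE_r h\|_{L^q(B_R)} \leq (\delta\delta')\cdot (\delta\delta')^{-2/q}\|\fE_s g\|_{L^q(B_{R/2})} = (\delta\delta')^{1-2/q}\|\fE_s g\|_{L^q(B_{R/2})}.
\]
Since Lorentz maps and parabolic rescalings preserve $L^\infty$ (modulo bounded factors), $\|g\|_\infty \sim \|h\|_\infty$; and since the spatial support expands from area $\sim \delta\delta'$ (the tile $\rho$) to area $\sim 1$ (a subset of $U$), $\|g\|_2 \sim (\delta\delta')^{-1/2}\|h\|_2$. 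Applying the hypothesis and simplifying the exponents,
\[
\|\fE_r h\|_{L^q(B_R)} \leq CM (\delta\delta')^{1 - 2/q - (1-\theta)/2}\|h\|_2^{1-\theta}\|h\|_\infty^\theta = CM(\delta\delta')^{(1+\theta)/2 - 2/q}\|h\|_2^{1-\theta}\|h\|_\infty^\theta,
\]
as desired.

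\emph{Main obstacle.} The principal technical work lies in Step (b): verifying that with the chosen $L$, the image $\overline{L}(r\rho)$ is actually a near-square of side $\sim r\sqrt{\delta\delta'}$. This rests on three ingredients: (i) Lorentz transformations permute the rulings of $\Sigma$, so $\overline{L}$ carries tiles to tiles; (ii) Lemma \ref{lem2.3} identifies $\Phi$-coordinate lines with projections of rulings; and (iii) in the coordinates $(\xi_1+\xi_2, \xi_1-\xi_2)$, which coincide with $(2\Phi_1, -2\Phi_2)$ to first order at the origin, $D_\lambda$ acts as the diagonal scaling $(\lambda, \lambda^{-1})$ on $\Sigma$, visible directly from \eqref{dilations} (and the invariance of the Minkowski form $\tau^2 - (\xi_1+\xi_2)(\xi_1-\xi_2)$). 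The nonlinearity of $\Phi$ at scales up to $\delta_0$ is controlled by $\|\Phi_r^{-1}\|_{C^1(U)} \leq 5$. A secondary task is to write out explicitly how each of (a)--(c) composes on $(t, x)$-space, verify that the phase $\Psi$ is linear in $(t, x)$ (so the modulation does not affect $L^q$ norms), and compute the Jacobian $|\det T^*|$ claimed above.
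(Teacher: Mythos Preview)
Your approach matches the paper's: unwind the parabolic scaling to land on $\Sigma$, apply a Lorentz transformation $D_\lambda B_\nu R_\omega$ (with $\lambda$ chosen to equalize the two side lengths) to carry the lifted tile to a near-square cap of side $\sim r\sqrt{\delta\delta'}$ near $(1,0,0)$, then parabolically rescale at scale $s\sim r\sqrt{\delta\delta'}$ to reach $\Sigma_s$ and invoke the hypothesis. The bookkeeping of exponents is exactly as in the paper.

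One concrete error: your claim that $T^*$ has operator norm $O(\sqrt{\delta\delta'})$ is false. The composite map acts anisotropically on $(t,x)$-space---by $\sim\delta\delta'$ in the $t$-direction and by $\sim\delta$, $\sim\delta'$ in the two spatial directions parallel to $(0,1,\pm1)$---so the operator norm is $O(\max(\delta,\delta'))$, not $O(\sqrt{\delta\delta'})$. (Dually: in frequency the tile is stretched by $\delta^{-1}$ and $(\delta')^{-1}$ in the two ruling directions, not isotropically by $(\delta\delta')^{-1/2}$.) This does not break the argument, since $\max(\delta,\delta')\leq\delta_0$ still gives $T^*(B_R)$ inside a ball of radius $R/2$; the paper tracks the image through each step and reaches exactly the box $Q(\delta\delta'R,\,\delta'R,\,\delta R)$. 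The paper also spends some care on two technicalities you gloss over: verifying $\overline{L}^{-1}(\rho_1)=\overline{L^{-1}}(\rho_1)$ (i.e., that $L$ does not throw part of the cap onto the other sheet $\tau<0$), and working with the Lorentz-invariant measure $d\mu$ rather than a bare Jacobian factor when transforming the extension integral. Neither changes the structure of the proof.
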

\begin{proof}
Fix $h \in L^\infty(U)$ supported in $\rho \in \fT_{\delta,\delta',r}$.  There exists $\rho_1 \in \fT_{r\delta,r\delta',1}$ such that $r\rho \subseteq \rho_1$.  By parabolic rescaling, we have
\begin{align*}
\|\fE_r h\|_{L^q(B_R)} = r^{\frac{4}{q}-2}\|\fE_1h_{\rho_1}\|_{L^q(P_r(B_R))},
\end{align*}
where $h_{\rho_1} := h(r^{-1}\cdot)$ is supported in $\rho_1$.  We assume without loss of generality that $\delta \leq \delta'$ and fix $\eta \in \rho_1$.  We claim that $\rho_1$ lies in the intersection of an $O(r\delta)$-neighborhood of $\ell_\eta^+$ and an $O(r\delta')$-neighborhood of $\ell_\eta^-$.  Indeed, let $\eta' \in \rho_1$ and set $\zeta = \Phi^{-1}(\eta)$ and $\zeta' = \Phi^{-1}(\eta')$.  By the definition of $(r\delta,r\delta',1)$-tile, we have
\begin{align*}
\dist(\zeta', (\{\zeta_1\}\times\R) \cap \Phi^{-1}(U)) &\leq r\delta,\\
\dist(\zeta', (\R\times\{\zeta_2\}) \cap \Phi^{-1}(U)) &\leq r\delta'.
\end{align*}
Thus, by Lemma \ref{lem2.3} and the boundedness of $\|\nabla \Phi\|$ near the origin, it follows that
\begin{align*}
\dist(\eta', \ell_\eta^+) &\lesssim  r\delta,\\
\dist(\eta', \ell_\eta^-) &\lesssim  r\delta',
\end{align*}
proving the claim.

Now, let $L := (D_\lambda B_\nu R_\omega)^{-1}$ with
\begin{align*}
\lambda &:= \sqrt{\frac{\delta}{\delta'}},\\
\nu &:= \eta_1,\\
\omega &:= \bigg(\frac{\phi(\eta)}{\sqrt{1+\eta_1^2}}, -\frac{\eta_2}{\sqrt{1+\eta_1^2}}\bigg),
\end{align*}
using the notation from \eqref{rotations}--\eqref{dilations}.  As in the proof of Lemma \ref{lem4.4*}, the map $\overline{B_\nu R_\omega}$ sends $\eta$ to the origin and $\ell_\eta^\pm$ to $\ell_0^\pm = \R(1,\pm1)$ and satisfies $\|\overline{B_\nu R_\omega}\|_{C^1(U)} \lesssim 1$.  Thus, by the claim, $\overline{B_\nu R_\omega}(\rho_1)$ lies in an $O(r\delta) \times O(r\delta')$ rectangle with slope $1$ centered at the origin, and consequently $\overline{D_\lambda}(\overline{B_\nu R_\omega}(\rho_1))$ is contained in $sU$ for some $s \lesssim r\sqrt{\delta\delta'}$.  It is easy to check that $B_\nu R_\omega(\phi(\xi),\xi) \in \Sigma$ for all $\xi \in U$, and thus by the discussion following \eqref{projection},
\begin{align}\label{ex3.1*}
\overline{L^{-1}}(\rho_1) = \overline{D_\lambda}(\overline{B_\nu R_\omega}(\rho_1)) \subseteq sU.
\end{align}
We claim that
\begin{align}\label{ex3.2*}
\overline{L}^{-1}(\rho_1) := \{\xi \in \Omega : \overline{L}(\xi) \in \rho_1\} = \overline{L^{-1}}(\rho_1).
\end{align}
Indeed, given a set $V \subseteq \Omega$, let $V^\pm := \{(\pm \phi(\xi),\xi) : \xi \in V\}$.  Then
\begin{align*}
\overline{L}^{-1}(\rho_1) &= \{\xi \in \Omega : L(\phi(\xi),\xi) \in \rho_1^+ \cup \rho_1^-\}\\ &= \{\xi \in \Omega : (\phi(\xi),\xi) \in L^{-1}(\rho_1^+) \cup -L^{-1}((-\rho_1)^+)\}.
\end{align*}
It is easy to check that $e_1 \cdot L^{-1}(\phi(\zeta),\zeta) > 0$ for every $\zeta \in U$.  Thus, since $-\rho_1 \subseteq U$ and $\phi \geq 0$, we have $(\phi(\xi),\xi) \notin -L^{-1}((-\rho_1)^+)$ for every $\xi$.  Hence,
\begin{align*}
\overline{L}^{-1}(\rho_1) = \{\xi \in \Omega : (\phi(\xi),\xi) \in L^{-1}(\rho_1^+)\} = \overline{L^{-1}}(\rho_1),
\end{align*}
proving the claim.

Now, define $F : \Sigma \rightarrow \C$ by $F(\tau,\xi) := h_{\rho_1}(\xi)\phi(\xi)$ and assume that $\delta,\delta'$ are small enough that $s \leq 1$.  Then, using \eqref{ex3.2*} and \eqref{ex3.1*}, it is straightforward to check that $L^{-1}(\supp F) \subseteq \Sigma$.  Thus,
\begin{align*}
\fE_1h_{\rho_1}(t,x) &= e^{-2\pi it}\int_\Sigma e^{2\pi i(t,x)\cdot(\tau,\xi)}F(\tau,\xi)d\mu(\tau,\xi)\\
&=e^{-2\pi it}\int_\Sigma e^{2\pi i(t,x)\cdot L(\tau,\xi)}F(L(\tau,\xi))d\mu(\tau,\xi),
\end{align*}
where $d\mu$ is the Lorentz-invariant measure given by \eqref{invariant measure}.  Hence, for $H(\xi) := h_{\rho_1}(\overline{L}(\xi))\frac{\phi(\overline{L}(\xi))}{\phi(\xi)}$, we have
\begin{align*}
|\fE_1h_{\rho_1}(t,x)| = |\fE_1H(L^*(t,x))|.
\end{align*}
Noting that $|\det L| = 1$, we obtain the relation
\begin{align*}
\|\fE_r h\|_{L^q(B_R)} = r^{\frac{4}{q}-2}\|\fE_1 H\|_{L^q(L^* P_r(B_R))},
\end{align*}
and parabolic rescaling then gives
\begin{align*}
\|\fE_rh\|_{L^q(B_R)} \sim (\delta\delta')^{1-\frac{2}{q}}\|\fE_s[H(s\cdot)]\|_{L^q(P_{s^{-1}}L^* P_r(B_R))},
\end{align*}
where $H(s\cdot)$ is supported in $U$ by \eqref{ex3.2*} and \eqref{ex3.1*}.

We claim that $P_{s^{-1}}L^* P_r(B_R)$ is covered by a bounded number of balls of radius $R/2$.  Assuming the claim is true, the hypothesis of the lemma implies that
\begin{align}\label{ex3.3*}
\|\fE_r h\|_{L^q(B_R)} \lesssim M(\delta\delta')^{1-\frac{2}{q}}\|H(s\cdot)\|_2^{1-\theta}\|H(s\cdot)\|_\infty^\theta.
\end{align}
To prove the claim, we may assume by translation invariance that $B_R$ is centered at the origin.  Let $Q(a,b,c)$ denote any rectangular box centered at zero with sides of length $O(a),O(b),O(c)$ parallel to $(1,0,0)$, $(0,1,1)$, $(0,1,-1)$, respectively.  Thus, slightly informally, $B_R \subseteq Q(R,R,R)$, and
\begin{align*}
P_r(B_R) \subseteq Q\bigg(\frac{R}{r^2},\frac{R}{r},\frac{R}{r}\bigg).
\end{align*}
We have $L^* = D_\lambda^{-*}B_\nu^{-*}R_\omega^{-*}$, where $S^{-*} := (S^{-1})^*$.  Since $R_\omega^{-*}$ and $B_\nu^{-*}$ have bounded norm, we can ignore their contribution.  Thus, from the definition of $D_\lambda$, we have
\begin{align*}
L^* P_r(B_R) \subseteq Q\bigg(\frac{R}{r^2},\frac{\sqrt{\delta'}R}{\sqrt{\delta}r},\frac{\sqrt{\delta}R}{\sqrt{\delta'}r}\bigg).
\end{align*}
The definition of $s$ then implies that
\begin{align*}
P_{s^{-1}}L^* P_r(B_R) \subseteq Q(\delta\delta' R, \delta'R, \delta R),
\end{align*}
which proves claim.

Finally, to finish the proof, we need to undo the changes of variable we have used.  Using \eqref{ex3.2*} and \eqref{ex3.1*}, we have $L(\phi(\xi),\xi) \in \Sigma$ for all $\xi \in \overline{L}^{-1}(\rho_1)$.  Thus, $\overline{L}$ is invertible on $\supp H$ with $\overline{L}^{-1}(\zeta) = \overline{L^{-1}}(\zeta)$ for $\zeta \in \overline{L}(\supp H) \subseteq U$.  Moreover, $\overline{L^{-1}} = \overline{D_\lambda}\circ \overline{B_\nu} \circ \overline{R_\omega}$ on $U$, so a straightforward calculation shows that $|\det \nabla \overline{L}^{-1}| \lesssim 1$ on $U$.  Using these observations, we find that
\begin{align*}
\|H(s\cdot)\|_2 &\lesssim \frac{1}{\sqrt{\delta\delta'}}\|h\|_2,\\
\|H(s\cdot)\|_\infty &\lesssim \|h\|_\infty.
\end{align*}
Plugging these bounds into \eqref{ex3.3*} completes the proof. 
\end{proof}

\begin{proposition}\label{prop5.2}
Assume that Theorem \ref{thm2.2} holds.  Then for every $\theta \in (3/13, 1]$ and $0 < \varepsilon \ll_\theta 1$, there exists a constant $C_{\varepsilon,\theta}$, depending only on $\varepsilon$ and $\theta$, such that
\begin{align*}
\|\fE_r f\|_{L^{13/4}(B_R)} \leq C_{\varepsilon,\theta}R^\varepsilon\|f\|_2^{1-\theta}\|f\|_\infty^\theta
\end{align*}
for all $r \in (0,1]$, $R \geq 1$, and balls $B_R$ of radius $R$.
\end{proposition}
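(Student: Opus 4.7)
The plan is to induct on dyadic $R \geq 1$, following Kim \cite{Kim}. The base case $R = 1$ is immediate from $\|\fE_r f\|_{L^{13/4}(B_1)} \lesssim \|f\|_1 \leq |U|^{1/2}\|f\|_2 \lesssim_\theta \|f\|_2^{1-\theta}\|f\|_\infty^\theta$, using $\|f\|_2 \leq |U|^{1/2}\|f\|_\infty$ to adjust the exponents; choosing $C_{\varepsilon,\theta}$ large absorbs the constant. For the inductive step, I assume the claimed estimate on balls of radius $R/2$ with constant $C_{\varepsilon,\theta}(R/2)^\varepsilon$.

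The key move is the pointwise broad/narrow split. By the definition of broadness, every $(t,x)$ that fails to be $K^{-\varepsilon}$-broad admits some $\rho \in \fR_r$ with $|\fE_r f_\rho(t,x)| > K^{-\varepsilon}|\fE_r f(t,x)|$, so
\[
\|\fE_r f\|_{L^{13/4}(B_R)}^{13/4} \lesssim \|\Br_{K^{-\varepsilon}}\fE_r f\|_{L^{13/4}(B_R)}^{13/4} + K^{13\varepsilon/4}\sum_{\rho \in \fR_r}\|\fE_r f_\rho\|_{L^{13/4}(B_R)}^{13/4}.
\]
Theorem \ref{thm2.2} bounds the first term by $(C_\varepsilon R^\varepsilon)^{13/4}\|f\|_2^3\|f\|_\infty^{1/4}$, which I rewrite in the required $\|f\|_2^{(1-\theta)13/4}\|f\|_\infty^{13\theta/4}$ form at the cost of a $\theta$-dependent constant (legitimate because $\theta > 1/13$ and $\|f\|_2 \leq |U|^{1/2}\|f\|_\infty$). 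For each narrow piece, Lemma \ref{lem5.1} applied with the inductive hypothesis on $B_{R/2}$ gives, for $\rho \in \fR_{\delta,r}$ (so $\delta\delta' = K^{-1}\delta$),
\[
\|\fE_r f_\rho\|_{L^{13/4}(B_R)}^{13/4} \lesssim C_{\varepsilon,\theta}^{13/4}(R/2)^{13\varepsilon/4}(K^{-1}\delta)^{13\gamma/4}\|f_\rho\|_2^{(1-\theta)13/4}\|f_\rho\|_\infty^{13\theta/4},
\]
where $\gamma := (1+\theta)/2 - 8/13 = (13\theta-3)/26 > 0$ precisely when $\theta > 3/13$.

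To complete the narrow sum I pull out $\|f\|_\infty^{13\theta/4}$ and control $\sum_{\rho \in \fR_{\delta,r}}\|f_\rho\|_2^p$ with $p := (1-\theta)13/4 \in (0,5/2)$. If $p \geq 2$ (i.e., $\theta \leq 5/13$), orthogonality gives $\sum_\rho \|f_\rho\|_2^p \leq (\max_\rho\|f_\rho\|_2)^{p-2}\sum_\rho\|f_\rho\|_2^2 \leq \|f\|_2^p$; if $p < 2$ (i.e., $\theta > 5/13$), Hölder with $|\fR_{\delta,r}| \lesssim K/\delta$ yields $\sum_\rho \|f_\rho\|_2^p \lesssim (K/\delta)^{1-p/2}\|f\|_2^p$. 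Summing the resulting geometric series over dyadic $\delta \in [K^{-1},\delta_0]$—both the $\delta$-exponent and the $K$-exponent work out positive thanks to $\gamma > 0$—the narrow contribution collapses to $\leq C_{\varepsilon,\theta}^{13/4}(R/2)^{13\varepsilon/4}K^{-c(\theta)}\|f\|_2^{(1-\theta)13/4}\|f\|_\infty^{13\theta/4}$ times a $\theta$-dependent constant, for some $c(\theta) > 0$.

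Combining everything yields
\[
\|\fE_r f\|_{L^{13/4}(B_R)}^{13/4} \leq \big(A_\varepsilon + C_{\varepsilon,\theta}^{13/4}\cdot 2^{-13\varepsilon/4}K^{13\varepsilon/4 - c(\theta)}B_{\varepsilon,\theta}\big)R^{13\varepsilon/4}\|f\|_2^{(1-\theta)13/4}\|f\|_\infty^{13\theta/4}.
\]
With $K = 2^{\lceil\varepsilon^{-10}\rceil}$ and $\varepsilon \ll_\theta 1$, the $K^{-c(\theta)}$ gain dominates $K^{13\varepsilon/4}$, so the bracketed factor is $\leq C_{\varepsilon,\theta}^{13/4}$ once $C_{\varepsilon,\theta}$ is chosen large enough, and the induction closes. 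The main obstacle is the narrow-sum bookkeeping: one must keep both $K$- and $\delta$-exponents positive across the two regimes in $\theta$. The decisive structural fact is that Lemma \ref{lem5.1} produces the exponent $(1+\theta)/2 - 8/13$, strictly positive exactly when $\theta > 3/13$, so the parabolic rescaling gain beats the broad/narrow loss precisely on the range stated in the proposition.
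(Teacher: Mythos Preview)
Your argument is correct and follows essentially the same route as the paper: induction on $R$, the pointwise broad/narrow split, Theorem~\ref{thm2.2} for the broad part, and Lemma~\ref{lem5.1} with the inductive hypothesis for each $\rho\in\fR_{\delta,r}$, closing the induction because the rescaling exponent $(1+\theta)/2-8/13$ is positive exactly when $\theta>3/13$. The only difference is cosmetic: the paper observes (via $\|f\|_2\le|U|^{1/2}\|f\|_\infty$) that it suffices to treat $\theta$ close to $3/13$, in particular $\theta\le 5/13$, so that $p=\tfrac{13}{4}(1-\theta)\ge 2$ and the single embedding $\ell^2\hookrightarrow\ell^p$ handles the $\rho$-sum; you instead treat the regime $\theta>5/13$ directly with a H\"older/cardinality bound, and your exponent bookkeeping there (net $K^{-1/4}\delta^{1/4}$) is fine. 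One small omission: Lemma~\ref{lem5.1} is stated for dyadic $r$, so you should note, as the paper does, that one may reduce to dyadic $r$ by an initial parabolic rescaling.
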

\begin{proof}
We will induct on $R$.  The base case, that $R \sim 1$, holds trivially.  We assume as our induction hypothesis that the proposition holds with $R/2$ in place of $R$.  Additionally, we may assume that $2C_\varepsilon^{13/4} \leq C_{\varepsilon,\theta}^{13/4}$, where $C_\varepsilon$ is the constant from Theorem \ref{thm2.2}.  The definition of $K^{-\varepsilon}$-broad implies that
\begin{align*}
|\fE_r f(t,x)| \leq \max\{|\Br_{K^{-\varepsilon}}\fE_rf(t,x)|,K^\varepsilon\max_{\rho \in \fR_r}|\fE_rf_\rho(t,x)|\}
\end{align*}
for every $(t,x) \in \R \times \R^2$.  It follows that
\begin{align*}
\int_{B_R}|\fE_rf|^{13/4} \leq \int_{B_R}|\Br_{K^{-\varepsilon}}\fE_rf|^{13/4} + K^{\frac{13}{4}\varepsilon}\sum_{\rho \in \fR_r}\int_{B_R}|\fE_rf_\rho|^{13/4} =: \textrm{I} + \textrm{II}.
\end{align*}
To bound the first term, we use Theorem \ref{thm2.2} and H\"older's inequality to get
\begin{align*}
\textrm{I} \leq (C_\varepsilon\|f\|_2^{12/13}\|f\|_\infty^{1/13})^{13/4} \leq (C_\varepsilon\|f\|_2^{1-\theta}\|f\|_\infty^\theta)^{13/4} \leq \frac{1}{2}(C_{\varepsilon,\theta}\|f\|_2^{1-\theta}\|f\|_\infty^{\theta})^{13/4}.
\end{align*}
To bound the second term, we will use Lemma \ref{lem5.1}.  We may assume that $r$ is dyadic by parabolic rescaling, and the other hypothesis of the lemma holds by our inductive assumption.  
Additionally, by H\"older's inequality, we may assume that $\theta$ is close to $3/13$; in particular, that $\theta \leq 5/13$.  Then,
\begin{align*}
\textrm{II} &\leq K^{\frac{13}{4}\varepsilon}\sum_{\delta \in [K^{-1},\delta_0]}\sum_{\rho \in \fR_{\delta,r}}(CC_{\varepsilon,\theta}R^{\varepsilon}(\delta K^{-1})^{\frac{1}{2}(\theta-\frac{3}{13})}\|f_\rho\|_{2}^{1-\theta}\|f_\rho\|_{\infty}^\theta)^{13/4}\\
&\leq K^{\frac{13}{4}\varepsilon + \frac{13}{8}(\frac{3}{13}-\theta)}C^{13/4}\sum_{\delta \in [K^{-1},\delta_0]}\delta^{\frac{13}{8}(\theta-\frac{3}{13})}(C_{\varepsilon,\theta}R^\varepsilon)^{13/4}\sum_{\rho \in \fR_{\delta,r}}\|f_\rho\|_{2}^{\frac{13}{4}(1-\theta)}\|f\|_\infty^{\frac{13}{4}\theta}\\
&\leq \bigg[K^{\frac{13}{4}\varepsilon + \frac{13}{8}(\frac{3}{13}-\theta)}C^{13/4}\bigg(\sum_{\delta \in [K^{-1},\delta_0]}\delta^{\frac{13}{8}(\theta-\frac{3}{13})}\bigg)2^{\frac{13}{8}(1-\theta)}\bigg](C_{\varepsilon,\theta}R^\varepsilon\|f\|_2^{1-\theta}\|f\|_\infty^\theta)^{13/4},
\end{align*}
where the last step used the inclusion $\ell^2 \hookrightarrow \ell^{\frac{13}{4}(1-\theta)}$ and that $\fR_{\delta,r}$ covers $U$ with overlap of multiplicity $2$.  Since $\theta > 3/13$, the sum over $\delta$ is bounded and the power of $K$ is negative for $\varepsilon$ sufficiently small.  Thus, since $K \rightarrow \infty$ as $\varepsilon \rightarrow 0$ by the hypothesis of Theorem \ref{thm2.2}, the expression in square brackets is at most $1/2$ for $\varepsilon$ sufficiently small, and the induction closes. 
\end{proof}

Assuming Theorem \ref{thm2.2} holds, Proposition \ref{prop5.2} implies the restricted strong type bounds
\begin{align*}
\|\fE_rf_E\|_{L^{13/4}(B_R)} \lesssim_{\varepsilon,p} R^\varepsilon|E|^{1/p}.
\end{align*}
for all $p > 13/5$, measurable sets $E \subseteq U$, and $|f_E| \lesssim \chi_E$.  Then, by real interpolation with the trivial $L^1 \rightarrow L^\infty$ estimate, we obtain the strong type bounds
\begin{align*}
\|\fE_r f\|_{L^{q}(B_R)} \lesssim_{\varepsilon,p,q}R^\varepsilon\|f\|_p
\end{align*}
for all $q > 13/4$ and $p > (q/2)'$.  Tao's epsilon removal lemma, in the form of Theorem 5.3 in \cite{Kim}, consequently gives the global strong type bounds
\begin{align}\label{off-scaling}
\|\fE_r f\|_q \lesssim_{p,q}\|f\|_p
\end{align}
for the same range of $p,q$, completing the proof of Theorem \ref{thm1.1}.

\section{Proof of Theorem \ref{thm2.2}}

We are left to prove Theorem \ref{thm2.2}, which will occupy the rest of the article.  To enable an inductive argument, we will actually need to prove a slightly stronger theorem, as in \cite{Guth}.  In Section 2, we defined broad points by considering the contribution to $\fE_rf$ from each $f_\rho$, where $f_\rho := f\chi_\rho$ and $\rho \in \fR_r$.  Soon we will work with wave packets of the form $\fE_rf_{\rho,T}$, where $f_{\rho,T}$ is supported not in $\rho$ but  in a slight enlargement of it.  Thus, we need a more general definition of broad points in which the functions $f_\rho$ may have larger, overlapping supports.  Given $\rho = \Phi_r(I_\delta \times I_{\delta'}) \cap U \in \fT_{\delta,\delta',r}$ and $m \geq 1$, we define
\begin{align*}
m\rho := \Phi_r(m(I_\delta \times I_{\delta'})) \cap U,
\end{align*}
where $m(I_\delta \times I_{\delta'})$ is the $m$-fold dilate of the rectangle $I_\delta \times I_{\delta'}$ with respect to its center.  Let
\begin{align*}
\fS_r := \fR_{K^{-1},r};
\end{align*}
elements of $\fS_r$ are essentially $K^{-1} \times K^{-1}$ squares.  Now, given $f \in L^2(U)$, suppose that $f = \sum_{\tau \in \fS_r} f_\tau$, with each $f_\tau$ supported in $m \tau$, for some $m \geq 1$.  In our modified definition, $(t,x) \in \R \times \R^2$ is \emph{$\alpha$-broad for $\fE_r f$} if
\begin{align*}
\max_{\rho \in \fR_r}\bigg|\sum_{\tau \in \fS_r : \tau \subseteq \rho}\fE_rf_\tau(t,x)\bigg| \leq \alpha|\fE_rf(t,x)|.
\end{align*}
We define the \emph{$\alpha$-broad part} of $\fE_rf$, still denoted by $\Br_{\alpha}\fE_rf$, as in Section 2.  These definitions depend on the particular decomposition $f = \sum_\tau f_\tau$.

\begin{theorem}\label{thm2.4}
For every $0 < \varepsilon \ll 1$, there exists a constant $C_\varepsilon'$, depending only on $\varepsilon$, such that if $K = 2^{\lceil \varepsilon^{-10}\rceil}$, then the following holds: 
If $f = \sum_{\tau \in \fS_r} f_\tau$ with each $f_\tau$ supported in $m \tau$, for some $m \geq 1$, and if additionally $f$ satisfies
\begin{align}\label{ex2.1}
\dashint_{D(\xi,R^{-1/2})}|f_\tau|^2 \leq 1
\end{align}
for all $\xi \in U$ and $\tau \in \fS_r$, then
\begin{align*}
\int_{B_R}|\Br_{\alpha}\fE_rf|^{13/4} \leq C_\varepsilon'R^{\varepsilon+\varepsilon^6\log(K^\varepsilon\alpha m^2)}\bigg(\sum_{\tau \in \fS_r}\|f_\tau\|_2^2\bigg)^{3/2+\varepsilon}
\end{align*}
for all $r \in (0,1]$,  $R \gg_\varepsilon 1$, balls $B_R$ of radius $R$, and $\alpha \in [K^{-\varepsilon}, 1]$.
\end{theorem}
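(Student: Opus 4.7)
The plan is to follow Guth's polynomial partitioning scheme as adapted to the hyperbolic paraboloid by Cho--Lee \cite{Cho--Lee}, running a double induction on $R$ and on the total wave-packet mass $\sum_\tau \|f_\tau\|_2^2$. The base case $R \sim_\varepsilon 1$ follows from the trivial $L^\infty$ bound. For the inductive step, I would first perform a wave packet decomposition at scale $R^{1/2}$: partition $U$ into a grid of $R^{-1/2}$-caps and $B_R$ into $R^{1/2}$-balls, and write $f_\tau = \sum_T f_{\tau,T}$, where each $T$ is a tube of dimensions $\sim R^{1/2+\delta} \times R$ oriented along the normal to $\Sigma_r$ at the center of the relevant cap in $m\tau$. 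Stationary phase shows $\fE_r f_{\tau,T}$ is essentially localized to $T$ and the packets are almost orthogonal in $L^2$, so condition \eqref{ex2.1} propagates to uniform control on wave-packet density per direction.

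Next, apply the polynomial partitioning lemma to $|\Br_\alpha \fE_r f|^{13/4}\chi_{B_R}$, producing a polynomial $P$ of degree $D \sim R^{\varepsilon^2}$ whose complement splits $B_R$ into $\sim D^3$ cells $O_i$ on each of which $\int|\Br_\alpha \fE_r f|^{13/4}$ is roughly equal. Let $W$ be the $R^{1/2+\delta_0}$-neighborhood of $Z(P)$ and set $O_i' := O_i \setminus W$. Either the cells contribute the dominant share of the broad norm, or $W$ does. In the \emph{cellular case}, each tube $T$ crosses at most $D+1$ cells by B\'ezout, so that $\sum_i \sum_\tau \|f_{\tau,O_i'}\|_2^2 \lesssim D \sum_\tau \|f_\tau\|_2^2$, and summing the inductive estimate on each $O_i'$ at the same $R$ yields the cellular bound with a gain of $D^{-2\varepsilon} = R^{-2\varepsilon^3}$ coming from the concavity factor $3/2 + \varepsilon$. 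In the \emph{transverse sub-case} of the wall, tubes crossing $Z(P)$ transversely stay in $W$ only for length $\lesssim R^{1/2+\delta_0}$, so covering $W$ by balls of radius $R' \sim R^{1/2+\delta_0}$ and applying the induction at the smaller scale $R'$ yields the desired gain through the shrinking of $R^\varepsilon$.

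The main obstacle is the \emph{tangential sub-case}, where tubes remain in $W$ and the scale does not decrease. Here the broadness hypothesis becomes essential: if $(t,x)$ is $\alpha$-broad then no single $\rho \in \fR_r$ can account for $\fE_r f(t,x)$, which forces the dominant tangent wave-packet interactions to come from pairs $\tau_1,\tau_2 \in \fS_r$ that do not both lie in any common $\rho \in \fR_r$. By Lemma \ref{lem4.4*}(a) such pairs satisfy $\ldist(\xi,\zeta) \gtrsim K^{-1}$ for $\xi \in \tau_1, \zeta \in \tau_2$, and then Lemma \ref{lem4.4*}(b) guarantees the transversality bound
\begin{align*}
|\langle(\nabla^2 \phi_r(\xi))^{-1}(\nabla \phi_r(\xi) - \nabla \phi_r(\zeta)), \nabla \phi_r(\xi) - \nabla \phi_r(\zeta)\rangle| \gtrsim K^{-2}.
\end{align*}
This is precisely the hypothesis of the Lee-type bilinear restriction theorem \cite[Theorem 1.1]{Lee}, from which I would derive $\|\fE_r f_{\tau_1} \fE_r f_{\tau_2}\|_{L^2(B_R)} \lesssim R^{\varepsilon^3} K^C \|f_{\tau_1}\|_2 \|f_{\tau_2}\|_2$ on the tangent wave packets. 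Combined with a Wongkew-type count bounding the number of $R^{1/2+\delta_0}$-tangent tubes inside $W$ arising from any single $\tau$ by $O(R^{1/2+O(\delta_0)} D^2)$, and using the factorization $13/4 = 2 \cdot \frac{13}{8}$ with H\"older's inequality, one sums the bilinear contributions over broad pairs using the $\ell^2$-orthogonality of the caps $\tau$ to close the tangent estimate.

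To finish, I would choose $\delta_0 \ll \delta \ll \varepsilon^{10}$ so that each of the three sub-cases produces a factor strictly smaller than $R^{\varepsilon + \varepsilon^6 \log(K^\varepsilon \alpha m^2)}$ times the inductive constant. The logarithmic slack $\varepsilon^6 \log(K^\varepsilon \alpha m^2)$ is calibrated to absorb the incremental losses in $\alpha$ and $m$ incurred at each step (tangent wave packets enlarge the effective support parameter by a constant factor, and the broad threshold may weaken slightly when restricted to a sub-cell), so that the double induction terminates after $O(\log R)$ iterations with the claimed constant $C_\varepsilon'$.
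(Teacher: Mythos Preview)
Your overall architecture---double induction on $R$ and on $\sum_\tau\|f_\tau\|_2^2$, polynomial partitioning, and the cellular/transverse/tangent trichotomy---matches the paper's, and your cellular case is essentially correct. However, the two algebraic sub-cases, as you describe them, do not close.

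\textbf{Transverse subcase.} You propose covering $W$ by balls of radius $R'\sim R^{1/2+\delta_0}$ and inducting at that scale. This fails: even restricting to balls that meet $W$, there are roughly $D\cdot R^{1/2}$ of them, and the gain $R^{\varepsilon}\to (R')^{\varepsilon}\sim R^{\varepsilon/2}$ cannot absorb a polynomial-in-$R$ loss. Moreover, the claim that ``transverse tubes stay in $W$ only for length $\lesssim R^{1/2+\delta_0}$'' is false at the threshold angle $R^{2\delta-1/2}$, where the intersection has length $\sim R^{1-\delta}$. The paper instead covers $B_R$ by balls $B$ of radius $R^{1-\delta}$ with $\delta=\varepsilon^2$ and uses Lemma~\ref{lem4.3}(a): each tube lies in $\bbT_B^\sharp$ for at most $D^{O(1)}$ balls $B$. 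This gives $\sum_B\sum_\tau\|f_{\tau,B}^\sharp\|_2^2\lesssim D^{O(1)}\sum_\tau\|f_\tau\|_2^2$, and the mild scale reduction $R\to R^{1-\delta}$ produces a gain of $R^{-\delta\varepsilon}=R^{-\varepsilon^3}$ that beats the $D^{O(1)}=R^{O(\varepsilon^4)}$ loss (note $D=R^{\varepsilon^4}$ in the paper, not $R^{\varepsilon^2}$). Your choice $\delta_0\ll\varepsilon^{10}$ would not close even with the correct covering.

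\textbf{Tangent subcase.} Invoking Lee's bilinear theorem on all of $B_R$ yields only the generic bound $\|\fE_rf_{\tau_1,B}^\flat\fE_rf_{\tau_2,B}^\flat\|_{L^2}\lesssim\|f_{\tau_1,B}^\flat\|_2\|f_{\tau_2,B}^\flat\|_2$, and combining this with the tangent-tube count via interpolation leaves a loss of $R^{5/16}$ at $p=13/4$. The missing ingredient is an \emph{additional} factor of $R^{-1/2}$ in the $L^4$ bilinear estimate, which the paper obtains in Lemma~\ref{lem4.8} by exploiting the fact that all tubes in $\bbT_{B,Q}^\flat$ through a fixed $R^{1/2}$-cube $Q$ are \emph{coplanar} (tangent to $T_zZ(P)$ for a common $z$). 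Dually, the Fourier supports lie in an $O(R^{-1/2})$-neighborhood of a single curve in $\Sigma_r$, so the four-fold convolution sum collapses to an essentially diagonal one; Lemma~\ref{lem4.4*}(b) enters precisely here to guarantee that the curves $Z$ and $\tilde\ell$ in that proof intersect transversely when $\tau_1,\tau_2$ are separated. Your sketch invokes Lemma~\ref{lem4.4*}(b) only to feed Lee's theorem, which does not capture this extra algebraic gain.
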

 
 A couple of remarks may be helpful.  Firstly, the dyadic structure of our tiles, as defined in Section 2, implies that if $\tau \in \fS_r$ and $\rho \in \fR_r$, then either $\tau \cap \rho = \emptyset$ or $\tau \subseteq \rho$.  More generally, if $\rho_1 \in \fT_{\delta_1,\delta_1',r}$ and $\rho_2 \in \fT_{\delta_2,\delta_2',r}$, then either $\rho_1 \cap \rho_2 = \emptyset$ or $\rho_1 \cap \rho_2 \in \fT_{\min_i \delta_i, \min_i\delta_i',r}$.  Secondly, Theorem \ref{thm2.4} is indeed stronger than Theorem \ref{thm2.2}.  We can derive the latter from the former as follows:  If $R \sim_\varepsilon 1$, then the estimate in Theorem \ref{thm2.2} is trivial, so we may assume that $R \gg_\varepsilon 1$.  By scaling, we also may assume that $\|f\|_\infty = 1$.  Thus, the condition \eqref{ex2.1} holds automatically.  We now apply Theorem \ref{thm2.4} with $\alpha = K^{-\varepsilon}$ and $m=1$ to get
\begin{align*}
\int_{B_R}|\Br_{K^{-\varepsilon}}\fE_rf|^{13/4} \leq C_\varepsilon' R^{\varepsilon}\|f\|_2^{3+2\varepsilon} \leq |U|^{\varepsilon}C_\varepsilon' R^{\varepsilon}\|f\|_2^3,
\end{align*}
and then raising both sides to the power ${4/13}$ finishes the proof.

\subsection{Preliminaries}
Before beginning the proof of Theorem \ref{thm2.4}, we lay some groundwork.   For the remainder of the article, $\varepsilon$, $m$, $r$, $R$, $B_R$, and $\alpha$ are fixed.  Implicit constants will be allowed to depend on $\varepsilon$.  The propositions and lemma we record in this subsection are by now standard. 

We begin with the wave packet decomposition.  Let $\Theta$ be a collection of discs $\theta$ of radius $R^{-1/2}$ which cover $U$ with bounded overlap.  We denote by $c_\theta$ the center of $\theta$, and we let $v_\theta$ be the unit normal vector to $\Sigma_r$ at $(\phi_r(c_\theta),c_\theta)$.  We may assume that $c_\theta \in U$ for every $\theta$. Let $\delta := \varepsilon^2$, and for each $\theta$, let $\bbT(\theta)$ be a collection of tubes parallel to $v_\theta$ with radius $R^{\delta+1/2}$ and length $R$ and which cover $B_R$ with bounded overlap.  If $T \in \bbT(\theta)$, then $v(T) := v_\theta$ denotes the direction of $T$.  Finally, we set $\bbT := \bigcup_{\theta \in \Theta}\bbT(\theta)$.  The following wave packet decomposition resembles Proposition 2.6 in \cite{Guth}:

\begin{proposition}\label{prop3.1}
For each $T \in \bbT$, there exists a function $f_T \in L^2(\R^2)$ such that:
\begin{itemize}
\item[\textnormal{(i)}]{If $T \in \bbT(\theta)$, then $f_T$ is supported in $3\theta$;}
\item[\textnormal{(ii)}]{If $(t,x) \in B_R \setminus T$, then $|\fE_rf_T(t,x)| \leq R^{-1000}\|f\|_2$;}
\item[\textnormal{(iii)}]{$|\fE_rf(t,x) - \sum_{T \in \bbT}\fE_rf_T(t,x)| \leq R^{-1000}\|f\|_2$ for every $(t,x) \in B_R$;}
\item[\textnormal{(iv)}]{If $T_1,T_2 \in \bbT(\theta)$ and $T_1 \cap T_2 = \emptyset$, then $|\int f_{T_1}\overline{f_{T_2}}| \leq R^{-1000}\|f\|_{L^2(\theta)}^2$;}
\item[\textnormal{(v)}]{$\sum_{T \in \bbT(\theta)}\|f_T\|_2^2 \lesssim \|f\|_{L^2(\theta)}^2$.}
\end{itemize}
\end{proposition}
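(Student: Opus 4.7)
I will follow the standard wave-packet template. Fix a smooth partition of unity $\{\psi_\theta\}_{\theta \in \Theta}$ of $U$ with $\psi_\theta$ adapted to $2\theta$, and for each $\theta$ choose a cutoff $W_\theta$ equal to $1$ on $2\theta$ and supported in $3\theta$. Since $v_\theta$ is proportional to $(1,-\nabla\phi_r(c_\theta))$, and for $\delta_0$ small its $t$-component is bounded away from $0$, the tubes of $\bbT(\theta)$ may be parametrized by their intersections $y_T \in \R^2$ with the slice $\{t=0\}$. Take $\{\chi_{y_T}\}_{T \in \bbT(\theta)}$ to be a smooth partition of unity on the relevant portion of $\R^2$ with each $\chi_{y_T}$ supported in a disc $D(y_T, cR^{1/2+\delta})$ for a small constant $c$, arranged so that $T \cap B_R$ is contained in the slab $\{(t,x) : |x + t\nabla\phi_r(c_\theta) - y_T| \leq R^{1/2+\delta}\}$.

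Then set
\begin{align*}
f_T(\xi) := W_\theta(\xi)\int_{\R^2}\chi_{y_T}(z)\,\widehat{f\psi_\theta}(z)\,e^{2\pi i\xi\cdot z}\,dz,
\end{align*}
where $\widehat{\cdot}$ denotes the two-dimensional Fourier transform. Property (i) is immediate from $\supp W_\theta \subseteq 3\theta$. Property (iii) is essentially Fourier inversion: since $\sum_T \chi_{y_T} \equiv 1$ and $W_\theta \equiv 1$ on $\supp(f\psi_\theta) \subseteq 2\theta$, we have $\sum_{T \in \bbT(\theta)} f_T = f\psi_\theta$ on $U$, up to contributions of tubes whose centers lie far from the projection of $B_R$; these contribute $O(R^{-1000}\|f\|_2)$ via the same non-stationary phase argument used for (ii). Property (v) follows from the bound $\|f_T\|_2 \lesssim \|\chi_{y_T}\widehat{f\psi_\theta}\|_2$, Plancherel, and the pointwise inequality $\sum_T \chi_{y_T}^2 \leq \sum_T \chi_{y_T} = 1$. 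Property (iv) follows by expanding $\int f_{T_1}\overline{f_{T_2}}$ and recognizing the resulting $\xi$-integral as $\widehat{W_\theta^2}(z_2-z_1)$, whose Schwartz decay beyond scale $R^{1/2}$ yields the factor $R^{-1000}$ whenever $|y_{T_1}-y_{T_2}| \gtrsim R^{1/2+\delta}$, which holds when $T_1 \cap T_2 = \emptyset$.

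The heart of the proof is property (ii). Exchanging the order of integration gives
\begin{align*}
\fE_r f_T(t,x) = \int\chi_{y_T}(z)\,\widehat{f\psi_\theta}(z)\left[\int e^{2\pi i\Phi(\xi)}W_\theta(\xi)\,d\xi\right]dz,
\end{align*}
where $\Phi(\xi) := t\phi_r(\xi) + (x+z)\cdot\xi$. Since $W_\theta$ has frequency scale $R^{-1/2}$ and $\phi_r$ has bounded second derivatives on $U$ uniformly in $r$, for $\xi \in 3\theta$ we have $\nabla_\xi\Phi(\xi) = x + z + t\nabla\phi_r(c_\theta) + O(R^{1/2})$, using $|t| \leq R$. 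For $(t,x) \in B_R \setminus T$ and $z \in \supp\chi_{y_T}$, the triangle inequality yields $|x+z+t\nabla\phi_r(c_\theta)| \gtrsim R^{1/2+\delta}$, which dominates the error. Repeated non-stationary phase integration by parts (each step gaining $R^{-\delta}$ since $W_\theta$ has scale $R^{-1/2}$) produces the desired $R^{-1000}$ decay after $O(\varepsilon^{-2})$ iterations; the outer integral is then controlled by Cauchy--Schwarz together with $\|\widehat{f\psi_\theta}\|_2 = \|f\psi_\theta\|_2 \leq \|f\|_2$.

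The chief technical obstacle is the geometric bookkeeping: choosing $c$ in the support of $\chi_{y_T}$ small enough that the $O(R^{1/2})$ error from the variation of $\nabla\phi_r$ on $3\theta$ is dominated by the principal term $R^{1/2+\delta}$, identifying the parametrization of $\bbT(\theta)$ so that ``$(t,x) \notin T$'' translates directly into the required lower bound on $|\nabla_\xi\Phi|$, and sequencing the enlargements $\theta \subseteq 2\theta \subseteq 3\theta$ so that the partition-of-unity arguments in (iii) and (v) align. None of these steps is individually difficult, but together they require careful arrangement.
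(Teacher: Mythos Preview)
Your proposal is correct and follows essentially the same approach as the paper, which simply cites Guth's wave packet construction and notes that the uniform bounds $\sup_{\xi\in U}|\nabla^k\phi_r(\xi)|\lesssim_k 1$ make all constants uniform in $r$; you have spelled out precisely that construction and singled out the same uniformity point. Two small clean-ups worth making: the sign convention linking $y_T$ to the Fourier variable $z$ should be chosen so that $z\approx y_T$ actually centers the packet at $x\approx y_T$ when $t=0$ (as written there is a sign flip), and the non-stationary phase iteration uses bounds on \emph{all} derivatives of $\phi_r$, not just the Hessian.
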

\begin{proof}
Adapting Guth's argument in \cite{Guth} is straightforward.  The fact that the derivatives of $\phi_r$ are bounded in $r$ (i.e.~$\sup_{\xi \in U}|\nabla^k\phi_r(\xi)| \lesssim_k 1$) ensures that all constants arising in the argument can be made uniform in $r$.  We note, in particular, that the crucial derivative estimates appearing in line (17) of \cite{Guth} hold uniformly in $r$ when adapted to our setting.
\end{proof}

Next, we record an orthogonality lemma from \cite{Guth}.  The special case $N =1$ will be of particular use.  
\begin{lemma}\label{lem3.2}
Let $\bbT_1,\ldots,\bbT_N$ be subsets of $\bbT$. Suppose that each tube in $\bbT$ belongs to at most $M$ of the $\bbT_i$, and for each $\tau \in \fS_r$, let
\begin{align*}
f_{\tau,i} := \sum_{T \in \bbT_i}f_{\tau,T},
\end{align*}
where the functions $f_{\tau,T}$ come from applying Proposition \ref{prop3.1} to $f_\tau$.  Then
\begin{align*}
\sum_{i=1}^N\int_{3\theta}|f_{\tau,i}|^2 \lesssim M\int_{10\theta}|f_\tau|^2
\end{align*}
for every $\theta \in \Theta$, and
\begin{align*}
\sum_{i =1}^N\int_U|f_{\tau,i}|^2 \lesssim M\int_U|f_\tau|^2.
\end{align*}
\end{lemma}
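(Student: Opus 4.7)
The plan is to prove the local estimate on a single disc $3\theta$ first and then deduce the global bound by summing over $\Theta$. Fix $\tau \in \fS_r$ and $\theta \in \Theta$. Since Proposition \ref{prop3.1}(i) forces each $f_{\tau,T}$ with $T \in \bbT(\theta')$ to be supported in $3\theta'$, only those $\theta'$ with $3\theta' \cap 3\theta \neq \emptyset$ contribute to the integral over $3\theta$. I would use the bounded overlap of $\Theta$ to conclude that there are only $O(1)$ such ``neighboring'' discs $\theta' \sim \theta$, and that each is contained in $10\theta$. Applying $(a_1+\cdots+a_k)^2 \leq k(a_1^2+\cdots+a_k^2)$ then yields
\begin{equation*}
\int_{3\theta}|f_{\tau,i}|^2 \lesssim \sum_{\theta' \sim \theta}\Big\|\sum_{T \in \bbT_i \cap \bbT(\theta')}f_{\tau,T}\Big\|_{L^2}^2,
\end{equation*}
reducing matters to one disc at a time.

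For each fixed $\theta'$, I would expand the $L^2$ norm into diagonal and off-diagonal contributions. The off-diagonal pairs split according to whether $T_1 \cap T_2 = \emptyset$ or not. Disjoint pairs contribute at most $R^{-1000}\|f_\tau\|_{L^2(\theta')}^2$ each by Proposition \ref{prop3.1}(iv), which is negligible after summing over the $O(R^C)$ possible pairs. Intersecting pairs are controlled by bounded overlap of the parallel tubes in $\bbT(\theta')$: each $T_1$ has only $O(1)$ neighbors $T_2$, so AM--GM reduces their contribution to a constant multiple of the diagonal sum $\sum_T \|f_{\tau,T}\|_2^2$. Combining these yields
\begin{equation*}
\Big\|\sum_{T \in \bbT_i \cap \bbT(\theta')}f_{\tau,T}\Big\|_{L^2}^2 \lesssim \sum_{T \in \bbT_i \cap \bbT(\theta')}\|f_{\tau,T}\|_2^2.
\end{equation*}

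Summing this over $i=1,\ldots,N$, the multiplicity hypothesis gives $\sum_i \sum_{T \in \bbT_i \cap \bbT(\theta')}\|f_{\tau,T}\|_2^2 \leq M\sum_{T \in \bbT(\theta')}\|f_{\tau,T}\|_2^2$, and Proposition \ref{prop3.1}(v) bounds the right-hand side by $CM\|f_\tau\|_{L^2(\theta')}^2$. Summing over the $O(1)$ neighbors $\theta' \sim \theta$ completes the first inequality, using $3\theta' \subseteq 10\theta$. For the global bound, I would partition $U$ by the bounded-overlap cover $\Theta$ and repeat the argument; cross terms between tubes associated to different discs $\theta_1 \neq \theta_2$ vanish unless $3\theta_1 \cap 3\theta_2 \neq \emptyset$ by the support condition (i), and in that case only $O(1)$ neighboring discs contribute, so the extra cross terms are absorbed into the diagonal sum by Cauchy--Schwarz.

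The whole argument is a routine almost-orthogonality manipulation, and I do not anticipate a serious obstacle. The only mildly delicate point is accounting for intersecting pairs of tubes inside the same $\bbT(\theta')$, which is handled by the bounded overlap built into the tube collection; the near-orthogonality in (iv) disposes of genuinely disjoint pairs thanks to the enormous $R^{-1000}$ factor, which easily absorbs the polynomial growth in the number of pairs.
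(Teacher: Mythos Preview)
Your argument is correct and is the standard almost-orthogonality proof for this lemma. The paper itself does not supply a proof here; it simply records the lemma and attributes it to Guth \cite{Guth}, so there is nothing to compare beyond noting that your outline matches the usual proof in that reference. One small point worth making explicit when you write it up: after summing the negligible off-diagonal error over $i$, you pick up a factor of $N$, but since only those $i$ with $\bbT_i \cap \bbT(\theta') \neq \emptyset$ contribute and there are at most $M\cdot\#\bbT(\theta') = MR^{O(1)}$ such indices, the $R^{-1000}$ gain still absorbs everything.
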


Finally, we turn to polynomial partitioning.  Let $P$ be a polynomial on $\R^d$.  We denote the zero set of $P$ by $Z(P)$ and say that $z \in Z(P)$ is nonsingular if $\nabla P(z) \neq 0$.  If $z$ is nonsingular, then $Z(P)$ is a smooth hypersurface near $z$.  If every point of $Z(P)$ is nonsingular, then we say that $P$ is nonsingular.
 
\begin{proposition}[Guth \cite{Guth}]\label{prop3.2}
Given $g \in L^1(\R^d)$ and $D \geq 1$, there exists a polynomial $P$ of degree at most $D$ such that $P$ is a product of nonsingular polynomials and each connected component $O$ of $\R^d \setminus Z(P)$ satisfies
\begin{align*}
\int_{O} |g| \sim \frac{1}{D^d}\int_{\R^d} |g|.
\end{align*}
\end{proposition}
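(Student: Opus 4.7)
The plan is to prove Proposition~\ref{prop3.2} by iteratively bisecting the finite measure $d\mu := |g|\,dx$, following the strategy Guth introduced in~\cite{Guth}. The main analytic input is the polynomial ham sandwich theorem of Stone--Tukey: for any $M$ finite Borel measures on $\R^d$ with continuous densities, there exists a nonzero real polynomial $Q$ of degree $\lesssim M^{1/d}$ whose zero set simultaneously bisects each of the measures. This is proved via the Veronese embedding into $\R^N$ with $N \sim M^{d}$ combined with the Borsuk--Ulam theorem on the sphere $S^{N-1}$ (the antipodal map $Q \mapsto -Q$ is used to match signs).

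My inductive claim is: for each $k \geq 0$, there is a polynomial $P_k$ of degree $\lesssim 2^{k/d}$ that is a product of $k$ nonsingular polynomials and whose complement $\R^d \setminus Z(P_k)$ consists of at most $O(2^k)$ open connected components, each carrying $\mu$-mass $\sim 2^{-k}\mu(\R^d)$. The base case $k=0$ is trivial, taking $P_0 \equiv 1$ with a single cell. For the inductive step, suppose the claim holds at stage $k$ with cells $O_1,\ldots,O_N$ where $N \lesssim 2^k$. After replacing $|g|\chi_{O_i}$ by a continuous approximation so the hypothesis of ham sandwich is met, I invoke the theorem on the measures $\mu|_{O_1},\ldots,\mu|_{O_N}$ to obtain a polynomial $Q_{k+1}$ of degree $\lesssim N^{1/d} \lesssim 2^{k/d}$ bisecting each. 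Setting $P_{k+1} := P_k Q_{k+1}$ refines the partition: each $O_i$ splits into $\{Q_{k+1} > 0\} \cap O_i$ and $\{Q_{k+1} < 0\} \cap O_i$, both of $\mu$-mass $\sim 2^{-(k+1)}\mu(\R^d)$. The degree increments form a geometric series summing to $\lesssim 2^{k/d}$. Stopping at the smallest $k$ with $2^{k/d} \geq D$ produces the required polynomial of degree $\lesssim D$ with $\sim D^d$ cells of mass $\sim D^{-d}\mu(\R^d)$; absorbing constants into the choice of $k$ makes the degree bound at most $D$ exactly.

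The main obstacle is ensuring that each factor $Q_{k+1}$ can be taken nonsingular, since ham sandwich alone does not guarantee this. The fix is that nonsingularity is generic: the discriminant locus (where $\nabla Q$ and $Q$ vanish simultaneously) is a proper algebraic subvariety of the finite-dimensional vector space of polynomials of degree $\leq \deg Q_{k+1}$, so an arbitrarily small perturbation of $Q_{k+1}$ is nonsingular. Because $\mu \ll dx$, the map $P \mapsto \mu(\{P>0\})$ is continuous in the coefficients, so the bisection property survives such a perturbation up to a small multiplicative error; this error is distributed over $k \sim d \log_2 D$ stages and absorbed by the $\sim$ in the conclusion. A secondary bookkeeping point is that the number of connected components at stage $k$ is indeed $O(2^k)$: each bisection step cuts each existing cell into exactly two pieces because, by continuity of $Q_{k+1}$ on the connected set $O_i$, both $\{Q_{k+1} > 0\} \cap O_i$ and $\{Q_{k+1} < 0\} \cap O_i$ are non-empty open sets whose union is $O_i \setminus Z(Q_{k+1})$.
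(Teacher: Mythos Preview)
The paper does not supply its own proof here; it simply attributes the result to Guth~\cite{Guth} and remarks afterward that Sard's theorem furnishes the nonsingular perturbation. Your outline is Guth's iterated ham-sandwich argument, and the strategy is correct, but the final ``bookkeeping'' paragraph contains a genuine gap.

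You assert that each bisection cuts a connected cell $O_i$ into exactly two connected pieces, on the grounds that $\{Q_{k+1}>0\}\cap O_i$ and $\{Q_{k+1}<0\}\cap O_i$ are non-empty open sets whose union is $O_i\setminus Z(Q_{k+1})$. That only shows both sign regions are non-empty; it does \emph{not} show either one is connected. Take $O_i=\R^2$ and $Q(x,y)=x^2-1$: the set $\{Q>0\}\cap O_i$ already has two components. So your inductive count of $O(2^k)$ connected components is unjustified. Guth handles this by either (i) working with the $2^k$ \emph{sign-condition} cells $\{\pm Q_1>0,\ldots,\pm Q_k>0\}$, which double exactly and carry equal mass by construction, or (ii) invoking the bound that the complement in $\R^d$ of a degree-$D$ variety has $O(D^d)$ connected components (Milnor--Thom, or the elementary argument Guth gives for products of nonsingular polynomials), which caps the total at $O(2^k)$ independently of how individual cells fragment. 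Either route closes the gap; your connectedness claim does not.

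Two smaller corrections: the Veronese embedding lands in $\R^N$ with $N\sim M$ (dimension of the space of degree-$\lesssim M^{1/d}$ polynomials), not $N\sim M^d$; and the clean way to arrange nonsingularity is Sard's theorem---replace $Q$ by $Q-c$ for a regular value $c$---which is exactly the perturbation the paper alludes to after the statement. Your discriminant-locus formulation is heuristically right but needs more care over $\R$ than you indicate.
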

\noindent We note that a product of nonsingular polynomials may have singular points. However, by a perturbation argument using Sard's theorem, one can ensure that nonsingular points are dense in the zero set of the partitioning polynomial.

\subsection{Main proof}
We are now ready to prove Theorem \ref{thm2.4} in earnest.  We will induct on $R$ and $\sum_{\tau \in \fS_r}\|f_\tau\|_2^2$.  The base cases, that $R \sim 1$ or $\sum_{\tau}\|f_\tau\|_2^2 \leq R^{-1000}$, are easy to check, and our induction hypotheses are that Theorem \ref{thm2.4} holds with: (i) $R/2$ in place of $R$, or (ii) $g$ in place of $f$ whenever $\sum_{\tau}\|g_\tau\|_2^2 \leq \frac{1}{2}\sum_{\tau}\|f_\tau\|_2^2$.  Throughout the proof, we will assume that $\varepsilon$ is sufficiently small and that $R$ is sufficiently large in relation to $\varepsilon$. 

We begin by setting $D := R^{\varepsilon^4}$ and applying Proposition \ref{prop3.2} to the function $|\Br_\alpha \fE_r f|^{13/4}\chi_{B_R}$ to produce a polynomial $P$ of degree at most $D$ such that
\begin{align*}
\R^3 \setminus Z(P) = \bigcup_{i \in I}O_i,
\end{align*}
where the ``cells" $O_i$ are connected, pairwise disjoint, and satisfy
\begin{align}\label{ex4.1}
\int_{B_R \cap O_i}|\Br_\alpha\fE_rf|^{13/4} \sim \frac{1}{D^3}\int_{B_R}|\Br_\alpha \fE_r f|^{13/4}.
\end{align}
In particular, the number of cells is $\#I \sim D^3$.  We define the ``wall" $W$ as the $R^{1/2+\delta}$-neighborhood of $Z(P)$, and we set $O_i' := O_i \setminus W$.  Thus,
\begin{align}\label{ex4.2}
\int_{B_R}|\Br_\alpha\fE_rf|^{13/4} = \sum_{i \in I}\int_{B_R \cap O_i'}|\Br_\alpha\fE_rf|^{13/4} + \int_{B_R \cap W}|\Br_\alpha\fE_r f|^{13/4}.
\end{align}
We now argue by cases, according to which term on the right-hand side of \eqref{ex4.2} dominates.

\subsection{Cellular case}
Suppose that the total contribution from the shrunken cells $O_i'$ dominates.  In this case, we have
\begin{align*}
\int_{B_R}|\Br_\alpha\fE_rf|^{13/4} \lesssim \sum_{i \in I}\int_{B_R \cap O_i'}|\Br_\alpha\fE_rf|^{13/4}.
\end{align*}
Using \eqref{ex4.1}, we then see that the contribution from any single $O_i'$ is controlled by the average of all such contributions.  Thus, ``most" cells should contribute close to the average, and it is straightforward to show that there exists $J \subseteq I$ such that $\#J \sim D^3$ and
\begin{align}\label{ex4.3*}
\int_{B_R \cap O_i'}|\Br_\alpha \fE_r f|^{13/4} \sim \frac{1}{D^3}\int_{B_R}|\Br_\alpha\fE_r f|^{13/4}
\end{align}
for all $i \in J$.  The lower bound on $\#J$ will be the basis for a pigeonholing argument shortly.

First, some definitions are needed.  For each $i \in I$ and $\tau \in \fS_r$, we set
\begin{align*}
\bbT_i := \{T \in \bbT : T \cap O_i' \neq \emptyset\}
\end{align*}
and
\begin{align*}
f_{\tau,i} := \sum_{T \in \bbT_i} f_{\tau,T},
\end{align*}
where the functions $f_{\tau,T}$ come from applying Proposition \ref{prop3.1} to $f_\tau$.  We also set
\begin{align*}
f_{i} := \sum_{\tau \in \fS_r} f_{\tau,i}.
\end{align*}
Since $f_\tau$ is supported in $m \tau$, property (i) in Proposition \ref{prop3.1} implies that $f_{\tau,i}$ is supported in an $O(R^{-1/2})$-neighborhood of $m\tau$.  Let $\overline{f}_i := \chi_U f_i$ and $\overline{f}_{i,\tau} := \chi_U f_{i,\tau}$.  If $R$ is sufficiently large, then $\supp \overline{f}_{\tau,i} \subseteq 2m\tau$.  Consequently, $\overline{f}_i$ has a well defined broad part with respect to these larger squares.  Soon we will apply our induction hypothesis to $\overline{f}_i$ (for some special $i$) with $m$ replaced by $2m$.     

\begin{lemma}\label{lem4.1}
If $(t,x) \in O_i'$ and $\alpha \leq 1/2$, then
\begin{align*}
|\Br_\alpha \fE_r f(t,x)| \leq |\Br_{2\alpha}\fE_r \overline{f}_i(t,x)| + R^{-900}\sum_{\tau \in \fS_r}\|f_\tau\|_2.
\end{align*}
\end{lemma}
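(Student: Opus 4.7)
My plan is to show that, for $(t,x) \in O_i'$, wave packets supported in tubes $T \notin \bbT_i$ contribute only negligibly to $\fE_r f(t,x)$ and to the tile-restricted sums appearing in the definition of broad, and then convert this into a comparison of broad parts. The key observation is that if $T \in \bbT \setminus \bbT_i$ then $T \cap O_i' = \emptyset$, so $(t,x) \notin T$, and Proposition \ref{prop3.1}(ii) gives $|\fE_r f_{\tau,T}(t,x)| \leq R^{-1000}\|f_\tau\|_2$. Since $|\bbT|$ is polynomial in $R$ and $|\fS_r| \lesssim K^2$, summing these negligible contributions and invoking Proposition \ref{prop3.1}(iii) yields, for some error $E_0$ of order $R^{-990}\sum_\tau\|f_\tau\|_2$,
\begin{equation*}
|\fE_r f(t,x) - \fE_r f_i(t,x)| \leq E_0 \quad \text{and} \quad \bigg|\sum_{\tau \subseteq \rho}\fE_r f_\tau(t,x) - \sum_{\tau \subseteq \rho}\fE_r f_{\tau,i}(t,x)\bigg| \leq E_0
\end{equation*}
for every $\rho \in \fR_r$. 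Since $\fE_r$ depends on its argument only through its restriction to $U$, the same inequalities hold with $\overline{f}_i$ and $\overline{f}_{\tau,i}$ in place of $f_i$ and $f_{\tau,i}$.

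Next, I would assume $(t,x)$ is $\alpha$-broad for $\fE_r f$ (otherwise the left side vanishes) and split into two cases according to the size of $|\fE_r \overline{f}_i(t,x)|$ relative to $E_0/\alpha$. If $|\fE_r \overline{f}_i(t,x)| \leq CE_0/\alpha$ for a suitable absolute constant $C$, then $|\fE_r f(t,x)| \leq |\fE_r \overline{f}_i(t,x)| + E_0 \lesssim E_0/\alpha$, and since $\alpha \geq K^{-\varepsilon}$ is bounded below by a constant depending only on $\varepsilon$, this is $\leq R^{-900}\sum_\tau \|f_\tau\|_2$ for $R \gg_\varepsilon 1$; the lemma then follows with the $\Br_{2\alpha}$ term equal to zero. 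In the opposite regime $|\fE_r \overline{f}_i(t,x)| > CE_0/\alpha$, plugging the wave-packet approximations into the $\alpha$-broadness hypothesis gives, for every $\rho \in \fR_r$,
\begin{equation*}
\bigg|\sum_{\tau \subseteq \rho}\fE_r \overline{f}_{\tau,i}(t,x)\bigg| \leq \alpha|\fE_r f(t,x)| + E_0 \leq \alpha|\fE_r \overline{f}_i(t,x)| + (1+\alpha)E_0 \leq 2\alpha |\fE_r \overline{f}_i(t,x)|,
\end{equation*}
where the final step uses $(1+\alpha)E_0 \leq \alpha|\fE_r \overline{f}_i(t,x)|$, a consequence of the case hypothesis (with $C$ chosen sufficiently large) together with $\alpha \leq 1/2$. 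This certifies that $(t,x)$ is $2\alpha$-broad for $\fE_r \overline{f}_i$, so $\Br_{2\alpha}\fE_r \overline{f}_i(t,x) = \fE_r \overline{f}_i(t,x)$, and the first displayed bound then gives the desired inequality.

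The main technical point to keep track of is that $\alpha$ is allowed to be as small as $K^{-\varepsilon}$, so the factors of $1/\alpha$ that arise cannot be absorbed into absolute constants but must be beaten by the polynomial savings $R^{-1000}$ afforded by Proposition \ref{prop3.1}; this works because $K$ depends only on $\varepsilon$ and $R$ is taken sufficiently large relative to $\varepsilon$. The hypothesis $\alpha \leq 1/2$ is precisely what produces the slack between the $\alpha$- and $2\alpha$-broad thresholds needed to accommodate the $O(E_0)$ errors when comparing the broadness conditions for $\fE_r f$ and $\fE_r \overline{f}_i$.
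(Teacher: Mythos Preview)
Your proposal is correct and follows essentially the same route as the paper's proof: both use Proposition~\ref{prop3.1}(ii),(iii) to show that $\fE_r f$ and $\fE_r \overline{f}_i$ (and the corresponding tile-restricted sums) differ by a negligible error on $O_i'$, then exploit the lower bound $\alpha \geq K^{-\varepsilon}$ and $R \gg_\varepsilon 1$ to absorb the error into the slack between $\alpha$- and $2\alpha$-broadness. The only cosmetic difference is that the paper sets the case-splitting threshold on $|\fE_r f(t,x)|$ rather than on $|\fE_r \overline{f}_i(t,x)|$, which is immaterial.
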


\begin{proof}
First, we may assume that
\begin{align}\label{ex4.3}
|\fE_rf(t,x)| \geq R^{-900}\sum_\tau\|f_\tau\|_2;
\end{align}
otherwise, the required inequality is trivial.  Since $(t,x) \in O_i'$, properties (iii) and (ii) in Proposition \ref{prop3.1} imply that
\begin{align*}
\fE_r f_\tau(t,x) = \sum_{T \in \bbT_i}\fE_rf_{\tau,T}(t,x) + O(R^{-990}\|f_\tau\|_2)
\end{align*}
for each $\tau$.  Summing over $\tau$, we get
\begin{align}\label{ex4.5*}
\fE_rf(t,x) = \fE_rf_i(t,x) + O\Big(R^{-990}\sum_\tau\|f_\tau\|_2\Big).
\end{align}
Now it suffices to show that if $(t,x)$ is $\alpha$-broad for $f$, then $(t,x)$ is $2\alpha$-broad for $\overline{f}_i$.  Assume the former and fix $\rho \in \fR_r$.  Using Proposition \ref{prop3.1} again, we have
\begin{align*}
\bigg\vert\sum_{\tau : \tau \subseteq \rho}\fE_r\overline{f}_{\tau,i}(t,x)\bigg\vert &= \bigg\vert\sum_{\tau : \tau \subseteq \rho}\fE_rf_{\tau,i}(t,x)\bigg\vert\\
 &= \bigg\vert\sum_{\tau : \tau \subseteq \rho}\fE_rf_\tau(t,x)\bigg\vert + O\Big(R^{-990}\sum_\tau\|f_\tau\|_2\Big)\\
&\leq \alpha|\fE_r f(t,x)| + O\Big(R^{-990}\sum_\tau\|f_\tau\|_2\Big).
\end{align*}
Using \eqref{ex4.3}, \eqref{ex4.5*}, and the fact that $\alpha \geq K^{-\varepsilon}$, the right-hand side is at most $2\alpha|\fE_rf_i(t,x)| = 2\alpha|\fE_r\overline{f}_i(t,x)|$ for $R$ sufficiently large.
\end{proof}

If $\alpha > 1/2$, then the estimate in Theorem \ref{thm2.4} holds trivially, since the power of $R$ can then be made at least $1000$ by taking $\varepsilon$ sufficiently small.  Thus, we may assume that $\alpha \leq 1/2$.  Applying Lemma \ref{lem4.1} to \eqref{ex4.3*} and recalling that $D = R^{\varepsilon^4}$, we get
\begin{align}\label{ex4.5}
\int_{B_R}|\Br_\alpha\fE_rf|^{13/4} \lesssim D^3\int_{B_R \cap O_i'}|\Br_{2\alpha}\fE_r\overline{f}_i|^{13/4} + O\Big(R^{-1000}\Big(\sum_{\tau \in \fS_r}\|f_\tau\|_2\Big)^{13/4}\Big)
\end{align}
for every $i \in J$.  We will now pick $i_0 \in J$ so that $\sum_{\tau \in \fS_r}\|\overline{f}_{\tau,i_0}\|_2^2$ is small, which will allow us to apply our induction hypothesis to $\overline{f}_{i_0}$.  Because $Z(P)$ is the zero set of a polynomial of degree at most $D$, any line is either contained in $Z(P)$ or intersects $Z(P)$ at most $D$ times.  Thus, each tube in $\bbT$ belongs to at most $D+1$ of the sets $\bbT_i$.  Now, applying Lemma \ref{lem3.2} and the bound $\#J \gtrsim D^3$, we must have
\begin{align*}
\frac{1}{\#J}\sum_{i \in J}\sum_\tau\|f_{\tau,i}\|_2^2 \leq \frac{C}{D^2}\sum_\tau\|f_\tau\|_2^2
\end{align*}
for some constant $C$. Consequently, there exists $i_0 \in J$ such that
\begin{align}\label{ex4.6}
\sum_\tau\|\overline{f}_{\tau,i_0}\|_2^2 \leq \sum_\tau\|f_{\tau,i_0}\|_2^2 \leq \frac{C}{D^2}\sum_\tau\|f_\tau\|_2^2 \leq \frac{1}{2}\sum_\tau\|f_\tau\|_2^2
\end{align}
for $R$ sufficiently large.  We can apply Theorem \ref{thm2.4} to $\overline{f}_{i_0}$ with $2m$ in place of $m$, provided \eqref{ex2.1} holds.  Since \eqref{ex2.1} holds for $f$, Lemma \ref{lem3.2} gives
\begin{align*}
\dashint_{D(\xi,R^{-1/2})}|\overline{f}_{\tau,i_0}|^2 \leq \dashint_{D(\xi,R^{-1/2})}|f_{\tau,i_0}|^2 \lesssim \dashint_{D(\xi,100R^{-1/2})}|f_\tau|^2 \lesssim 1.
\end{align*}
Thus, after multiplying $\overline{f}_{i_0}$ by a constant, we can apply Theorem \ref{thm2.4} to \eqref{ex4.5} with $i = i_0$ to get
\begin{align*}
\int_{B_R}|\Br_\alpha\fE_r f|^{13/4} \leq CD^3C_\varepsilon'R^{\varepsilon + \varepsilon^6\log(8K^\varepsilon\alpha m^2)}\Big(\sum_{\tau \in \fS_r}\|\overline{f}_{\tau,i_0}\|_2^2\Big)^{3/2+\varepsilon} + O\Big(R^{-1000}\Big(\sum_{\tau \in \fS_r}\|f_\tau\|_2\Big)^{13/4}\Big)
\end{align*}
for some $C$.  If the big $O$ term dominates, then the desired estimate follows easily.  Assuming it does not, then by \eqref{ex4.6} and the definition of $D$, we have altogether
\begin{align*}
\int_{B_R}|\Br_\alpha \fE_rf|^{13/4} \leq 2CR^{-2\varepsilon^5+\varepsilon^6\log(8)}C_\varepsilon'R^{\varepsilon+\varepsilon^6\log(K^\varepsilon\alpha m^2)}\Big(\sum_{\tau \in \fS_r}\|f_\tau\|_2^2\Big)^{3/2+\varepsilon},
\end{align*}
and the induction closes if $\varepsilon$ is sufficiently small and $R$ sufficiently large.

\subsection{Algebraic case}
Next, suppose that the contribution from $W$ dominates in \eqref{ex4.2}, so that
\begin{align}\label{ex4.8*}
\int_{B_R}|\Br_\alpha \fE_rf|^{13/4} \lesssim \int_{B_R \cap W}|\Br_\alpha\fE_r f|^{13/4}.
\end{align}
Following Guth \cite{Guth}, we distinguish between tubes that intersect $W$ transversely and those essentially tangent to $W$.  Let $\fB$ be a collection of balls $B$ of radius $R^{1-\delta}$ that cover $B_R$ with bounded overlap.

\begin{definition}
Fix $B \in \fB$.  Let $\bbT_B^\flat$ be the set of tubes $T$ satisfying $T \cap W \cap B \neq \emptyset$ and $\angle(v(T),T_z Z(P)) \leq R^{2\delta - 1/2}$ for every nonsingular point $z \in Z(P) \cap 2B \cap 10T$.  Let $\bbT_B^\sharp$ be the set of tubes $T$ satisfying $T \cap W \cap B \neq \emptyset$ and $T \notin \bbT_B^\flat$.
\end{definition}

Observe that if $T$ intersects $W \cap B$, then $T$ belongs to exactly one of $\bbT_B^\flat$ and $\bbT_B^\sharp$.  (The definition of $\bbT_B^\flat$ would be vacuous if $Z(P) \cap 2B \cap 10T$ contained only singular points; however, as noted above, we can arrange for nonsingular points to be dense in $Z(P)$.)  Thus, on $W \cap B$, each $\fE_rf_\tau$ is well approximated by the sum of the ``tangent" and ``transverse" wave packets, $\{\fE_rf_{\tau,T}\}_{T \in \bbT_B^\flat}$ and $\{\fE_rf_{\tau,B}\}_{T \in \bbT_B^\sharp}$, respectively.  Roughly speaking, our desired bound for $\|\Br_\alpha \fE_rf\|_{L^{13/4}(B \cap W)}$ will soon be reduced to a broad part estimate on the transverse contribution and a bilinear estimate on the tangent contribution.  The following geometric lemma, due to Guth \cite{Guth}, will be critical for establishing those bounds:

\begin{lemma}\label{lem4.3}
\textnormal{(a)} Each $T \in \bbT$ belongs to $\bbT_B^\sharp$ for at most $D^{O(1)}$ balls $B \in \fB$. \textnormal{(b)} For each $B \in \fB$, the number of discs $\theta \in \Theta$ such that $\bbT_B^\flat \cap \bbT(\theta) \neq \emptyset$ is at most $R^{O(\delta) + 1/2}$.  
\end{lemma}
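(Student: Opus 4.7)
\textbf{Plan for Lemma \ref{lem4.3}.} My plan is to prove (a) and (b) separately, each via a standard algebraic argument of Guth type: a degree-based count in $\R^3$ combined with the quantitative transversality threshold $R^{2\delta-1/2}$ in the definitions of $\bbT_B^\sharp$ and $\bbT_B^\flat$. Throughout, I will exploit the fact that nonsingular points are dense in $Z(P)$ (as arranged after Proposition \ref{prop3.2}) and that $\Sigma_r$ has $C^k$ bounds uniform in $r$, so that the normal map $c_\theta\mapsto v_\theta$ is bilipschitz onto a patch of $S^2$.

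For part (a), fix $T\in\bbT$ with core line $\ell_T$. If $T\in\bbT_B^\sharp$, there is a nonsingular $z_B\in Z(P)\cap 2B\cap 10T$ with $\angle(v(T),T_{z_B}Z(P))>R^{2\delta-1/2}$. The transversality forces $Z(P)$ to cross $10T$ near $z_B$ as a local ``sheet'' whose extent along $v(T)$ is at most $R^{\delta+1/2}/R^{2\delta-1/2}=R^{1-\delta}$. These transverse sheets are pairwise disjoint subsets of $Z(P)\cap 10T$, and a B\'ezout-type count — restricting $P$ to a generic family of lines parallel to $v(T)$ that fills $T$, each such line meeting $Z(P)$ in at most $D$ simple points — bounds the total number of sheets by $\mathrm{Poly}(D)$. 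Since the balls $\fB$ have radius $R^{1-\delta}$ with bounded overlap, each sheet lies in only $O(1)$ such balls, which yields the claim.

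For part (b), fix $B\in\fB$. If $\bbT_B^\flat\cap\bbT(\theta)\neq\emptyset$, then $v_\theta$ is within angular distance $R^{2\delta-1/2}$ of some tangent plane $T_z Z(P)$ with $z\in Z(P)\cap 2B$ nonsingular. Using the bilipschitz nature of $c_\theta\mapsto v_\theta$, counting admissible $\theta$ reduces to estimating the number of $R^{-1/2}$-separated points $c_\theta\in U$ whose normals lie in the $R^{2\delta-1/2}$-neighborhood of the Gauss image of $Z(P)\cap 2B$. The Wongkew bound $\mathcal{H}^2(Z(P)\cap 2B)\lesssim D\,R^{2-2\delta}$, combined with a Crofton-type integration over directions (equivalently, a covering estimate for the tangent-direction variety of $Z(P)\cap 2B$), then produces the bound $R^{1/2+O(\delta)}$.

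The main obstacle will be making the sheet-counting in (a) fully rigorous when $P$ is singular along $\ell_T$ or even vanishes on $\ell_T$: the nonsingular point $z_B$ need not sit on $\ell_T$, and a careless direct restriction $P|_{\ell_T}$ could be identically zero. This is handled by replacing $\ell_T$ with a generic parallel translate inside $T$, using the density of nonsingular points to ensure that such a translate meets each transverse sheet in at least one simple zero of $P$, after which the transversality angle $R^{2\delta-1/2}$ guarantees these zeros are distinct and bounded by $D$. All the remaining inputs depend only on $D=\deg P$ and on the uniform-in-$r$ smoothness of $\Sigma_r$, so the argument is essentially Guth's with no new ideas required.
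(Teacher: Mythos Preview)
The paper does not supply its own proof of this lemma; it simply attributes the result to Guth \cite{Guth}. So the comparison is to Guth's argument.

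Your plan for part (a) is sound and is exactly Guth's approach: pass to a generic line parallel to $v(T)$ inside $10T$, use the transversality threshold to force a genuine zero of $P$ on that line near each $B$ with $T\in\bbT_B^\sharp$, and invoke $\deg P\le D$. Your remark about avoiding the degeneracy $P|_{\ell_T}\equiv 0$ by a generic translate is the right fix.

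Part (b), however, has a real gap. You reduce to counting $R^{-1/2}$-separated directions $v_\theta$ lying in ``the $R^{2\delta-1/2}$-neighborhood of the Gauss image of $Z(P)\cap 2B$.'' First, the condition is the opposite one: $\angle(v_\theta,T_zZ)\le R^{2\delta-1/2}$ means $v_\theta$ is nearly \emph{orthogonal} to the Gauss normal $n_z$, i.e.\ $v_\theta$ lies near the great circle $n_z^\perp$, not near $n_z$. Second, and more seriously, even with the corrected condition this set of admissible directions is typically all of $S^2$: for a generic degree-$D$ surface the Gauss image is two-dimensional, so $\bigcup_z n_z^\perp$ covers the sphere, and no Wongkew or Crofton estimate on $\mathcal H^2(Z\cap 2B)$ alone can rescue you. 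What your outline drops is the \emph{positional} coupling in the definition of $\bbT_B^\flat$: the tube $T$ must meet $W\cap B$ and the point $z$ must lie in $10T$, so $v_\theta$ and $z$ are not independent. Guth uses this by first proving the containment $T\cap 2B\subset N_{R^{1/2+O(\delta)}}(Z(P))$ for every $T\in\bbT_B^\flat$ (this needs the tangency at \emph{every} nonsingular point of $Z\cap 2B\cap 10T$, together with a connectivity argument along the tube), and only then running a covering/counting argument on that thin neighborhood. Your sketch skips this containment step entirely and, without it, the Wongkew--Crofton input you cite does not yield $R^{1/2+O(\delta)}$.
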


To carry out the bilinear argument, we need to define the separation condition mentioned in Section 2.  Recall how we defined the Lorentz separation $\ldist(\xi,\zeta)$ of $\xi,\zeta \in \Omega$.  We say that two squares $\tau_1,\tau_2 \in \fS_r$ are \emph{separated} if
\begin{align*}
\ldist(r\xi, r\zeta) \geq {C_0rm}{K^{-1}}
\end{align*}
for all $\xi \in 2m\tau_1$ and $\zeta \in 2m\tau_2$, where $C_0 \geq 1$ is a constant to be chosen later.  Part (a) of Lemma \ref{lem4.4*} implies that points having small Lorentz separation must lie near a common line.  The next lemma extends this property to collections of non-separated squares.  \begin{lemma}\label{lem4.4}
Let $\fI \subseteq \fS_r$ be a collection of pairwise non-separated squares.  Then there exist $\sigma_1,\ldots,\sigma_4 \in \fT_{\delta,\delta_0,r} \cup \fT_{\delta_0,\delta,r}$, with $K^{-1} \leq \delta \lesssim mK^{-1}$, such that $\tau \subseteq \bigcup_{i=1}^4\sigma_i$ for every $\tau \in \fI$.
\end{lemma}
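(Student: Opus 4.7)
The plan is to translate the pairwise non-separation condition into a simple combinatorial constraint on the dyadic parameter squares $J_\tau = \Phi_r^{-1}(\tau) \subseteq [-\delta_0,\delta_0)^2$ associated to each $\tau \in \fS_r$, and then exploit a dichotomy to force all of $\fI$ into a single thin strip.

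First I would set up the parameter-space picture. Each $\tau \in \fS_r$ is $\Phi_r(J_\tau) \cap U$ for a dyadic $K^{-1}\times K^{-1}$ square $J_\tau$; let $p_\tau$ denote its center. Suppose $\tau_1,\tau_2 \in \fI$ are non-separated, with witnesses $\xi \in 2m\tau_1$ and $\zeta \in 2m\tau_2$ satisfying $\ldist(r\xi,r\zeta) < C_0 rmK^{-1}$. By Lemma \ref{lem4.4*}(a), $r\xi$ lies within $O(C_0 rmK^{-1})$ of one of the lines $\ell_{r\zeta}^\pm$. By Lemma \ref{lem2.3}, these lines are the $\Phi$-images of the vertical/horizontal lines $\{\Phi^{-1}(r\zeta)_i\}\times \R$ (or $\R\times\{\Phi^{-1}(r\zeta)_i\}$) through $\Phi^{-1}(r\zeta)$, so the bounded-derivative property of $\Phi^{-1}$ on $2U$ translates this to $|\Phi_r^{-1}(\xi)_j - \Phi_r^{-1}(\zeta)_j| \lesssim C_0 m K^{-1}$ for $j = 1$ or $j = 2$. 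Absorbing the $O(mK^{-1})$ diameter of $2mJ_{\tau_i}$ into the constant, I obtain that for some $c = CmK^{-1}$,
\[
|p_{\tau_1}^{(1)} - p_{\tau_2}^{(1)}| \leq c \quad \text{or} \quad |p_{\tau_1}^{(2)} - p_{\tau_2}^{(2)}| \leq c,
\]
for every non-separated pair in $\fI$.

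Next I would fix any $\tau_0 \in \fI$, write $p_0 := p_{\tau_0}$, and partition $\fI = A \cup B$, where $A := \{\tau : |p_\tau^{(1)} - p_0^{(1)}| \leq c\}$ and $B := \{\tau : |p_\tau^{(2)} - p_0^{(2)}| \leq c\}$. The heart of the argument is the dichotomy that either \emph{(i)} every $\tau \in A \setminus B$ satisfies $|p_\tau^{(2)} - p_0^{(2)}| \leq 2c$, or \emph{(ii)} every $\tau \in B \setminus A$ satisfies $|p_\tau^{(1)} - p_0^{(1)}| \leq 2c$. If both failed, one could pick $\tau_1 \in A\setminus B$ with $|p_{\tau_1}^{(2)}-p_0^{(2)}|>2c$ and $\tau_2 \in B\setminus A$ with $|p_{\tau_2}^{(1)}-p_0^{(1)}|>2c$; applying the coordinate-agreement statement to the non-separated pair $(\tau_1,\tau_2)$ and combining with $\tau_1 \in A$, $\tau_2 \in B$ yields $|p_{\tau_2}^{(1)}-p_0^{(1)}| \leq 2c$ or $|p_{\tau_1}^{(2)}-p_0^{(2)}| \leq 2c$ by the triangle inequality, a contradiction.

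Finally I would cover: WLOG (i) holds, so $A \setminus B$ lies in the horizontal strip $\{|y_2 - p_0^{(2)}|\leq 2c\}$ and $B$ in $\{|y_2 - p_0^{(2)}|\leq c\}$. Including the $K^{-1}\leq c$ diameter of each $J_\tau$ gives $\bigcup_{\tau \in \fI} J_\tau$ in a horizontal strip of width $O(c)$. Choosing $\delta$ to be the smallest dyadic number exceeding, say, $8c$, one has $K^{-1} \leq \delta \lesssim mK^{-1}$; the strip is then covered by at most $2$ dyadic intervals of length $\delta$ in the second coordinate, and $[-\delta_0,\delta_0)$ splits into the $2$ dyadic intervals of length $\delta_0$ in the first. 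By dyadic nesting, each $J_\tau$ is contained in one of the resulting $\leq 4$ rectangles, giving $\leq 4$ tiles $\sigma_i \in \fT_{\delta_0,\delta,r}$ with $\tau \subseteq \sigma_i$; case (ii) is symmetric with $\fT_{\delta,\delta_0,r}$.

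The main obstacle is obtaining $4$ tiles rather than $8$. A naive cover of $A$ alone gives $\leq 4$ tiles and $B$ alone gives $\leq 4$ more, totaling $8$. The extra savings come from the \emph{pairwise} nature of the non-separation hypothesis, which, via the dichotomy above, forces one of $A\setminus B$ or $B\setminus A$ to be trapped in a box so small that $\fI$ entirely fits in a single thin strip, coverable by $4$ tiles of a single type.
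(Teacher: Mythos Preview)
Your proposal is correct and follows essentially the same approach as the paper. Both arguments transport the non-separation condition to the parameter square via Lemmas~\ref{lem4.4*}(a) and~\ref{lem2.3} to obtain the ``one coordinate agrees up to $O(mK^{-1})$'' constraint, then use a three-point contradiction (your dichotomy on $A\setminus B$ versus $B\setminus A$ is exactly the paper's trichotomy on whether $\overline{I}$ meets $S\setminus 3T$ or $T\setminus 3S$) to force everything into a single thin strip, and finish by covering with $2\times 2$ dyadic rectangles.
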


\begin{proof}
For $\xi \in U$, let $\overline{\xi} := \Phi^{-1}(r\xi)$ and also set
\begin{align*}
{I} &:= \bigcup_{\tau \in I}\tau,\\
\overline{I} &:= \{\overline{\xi} : \xi \in I\}.
\end{align*}
Fix $\tau_1,\tau_2 \in \fI$. By part (a) of Lemma \ref{lem4.4*} and the definition of (non-)separated squares, there exist $\xi^*\in 2m\tau_1$ and $\zeta^* \in 2m\tau_2$ such that
\begin{align*}
\dist(r\xi^*,\ell_{r\zeta^*}^+ \cup \ell_{r\zeta^*}^-) \lesssim {rm}{K^{-1}}.
\end{align*}
Let $\eta$ be a point in $\ell_{r\zeta^*}^+ \cup \ell_{r\zeta^*}^-$ closest to $r\xi^*$.  By elementary geometry, $\eta$ lies in $2U$.  Thus, from the bound $\|\Phi^{-1}\|_{C^1(2U)} \lesssim 1$ and Lemma \ref{lem2.3}, we have
\begin{align*}
\dist(\overline{\xi^*}, (\{\overline{\zeta_1^*}\} \times \R) \cup (\R \times \{\overline{\zeta_2^*}\})) \lesssim |r\xi^* - \eta| \lesssim {rm}{K^{-1}}.
\end{align*}
Since $\diam \Phi^{-1}(r\cdot 2m\tau) \lesssim rmK^{-1}$ for each $\tau$, it follows that
\begin{align}\label{ex4.10*}
\dist(\overline{\xi}, (\{\overline{\zeta}_1\} \times \R) \cup (\R \times \{\overline{\zeta}_2\})) \leq A
\end{align}
for all $\xi,\zeta \in {I}$ and some $A \lesssim rmK^{-1}$.  Fix $\zeta \in {I}$ and set
\begin{align*}
S := [\overline{\zeta}_1-A,\overline{\zeta}_1+A] \times \R,\\
T := \R \times [\overline{\zeta}_2-A,\overline{\zeta}_2+A],
\end{align*}
so that $\overline{I} \subseteq S \cup T$.  Additionally, define
\begin{align*}
 {3S} := [\overline{\zeta}_1-3A,\overline{\zeta}_1+3A] \times \R,\\
 {3T} := \R \times [\overline{\zeta}_2-3A,\overline{\zeta}_2+3A].
\end{align*}
We consider three exhaustive cases:
\begin{itemize}
\item[(i)]{If $\overline{I} \cap (S \setminus 3T) \neq \emptyset$, then \eqref{ex4.10*} implies that $\overline{I} \cap (T \setminus 3S) = \emptyset$, and consequently $\overline{I} \subseteq 3S$.}
\item[(ii)]{If $\overline{I} \cap (T \setminus 3S) \neq \emptyset$, then \eqref{ex4.10*} implies that $\overline{I} \cap (S \setminus 3T) = \emptyset$, and consequently $\overline{I} \subseteq 3T$.}
\item[(iii)]{Otherwise, $\overline{I} \subseteq 3S \cap 3T$.}
\end{itemize}
Thus, by symmetry, we may assume that $\Phi_r^{-1}({I}) = r^{-1}\overline{I} \subseteq (r^{-1}\cdot 3S) \cap [-\delta_0,\delta_0)^2$.  The interval
\begin{align*}
[{r^{-1}}(\overline{\zeta_1} - 3A), {r^{-1}}(\overline{\zeta_1}+3A)] \cap [-\delta_0,\delta_0)
\end{align*}
is covered by two dyadic intervals $I_1,I_2 \subseteq [-\delta_0,\delta_0)$ of length $\delta \lesssim r^{-1}A \lesssim mK^{-1}$.  Thus, if we set
\begin{align*}
\sigma_1 &:= \Phi_r(I_1 \times [-\delta_0,0)) \cap U,\\
\sigma_2 &:= \Phi_r(I_2 \times [-\delta_0,0)) \cap U,\\
\sigma_3 &:= \Phi_r(I_1 \times [0,\delta_0)) \cap U,\\
\sigma_4 &:= \Phi_r(I_2 \times [0,\delta_0)) \cap U,
\end{align*}
then $I \subseteq \bigcup_{i=1}^4 \sigma_i$ and the proof is complete.
\end{proof}

As mentioned above, estimating $\|\Br_\alpha \fE_rf\|_{L^{13/4}(B \cap W)}$ can be reduced to estimating certain contributions from transverse and tangent wave packets.  The next lemma facilitates this reduction.  First, some notation is needed.  For $\tau \in \fS_r$ and $B \in \fB$, we set
\begin{align*}
f_{\tau,B}^\flat := \sum_{T \in \bbT_B^\flat} f_{\tau,T} \quad\quad \text{and}\quad\quad f_{\tau,B}^\sharp := \sum_{T \in \bbT_B^\sharp} f_{\tau,T}.
\end{align*}
We also let
\begin{align*}
f_B^\flat := \sum_{\tau \in \fS_r} f_{\tau,B}^\flat \quad\quad \text{and} \quad\quad f_B^\sharp := \sum_{\tau \in \fS_r} f_{\tau,B}^\sharp.
\end{align*}
Given $\fI \subseteq \fS_r$, we set
\begin{align*}
f_{\fI,B}^\flat := \sum_{\tau \in \fI}f_{\tau,B}^\flat \quad\quad\text{and}\quad\quad f_{\fI,B}^\sharp := \sum_{\tau \in \fI}f_{\tau,B}^\sharp.
\end{align*}
We note that $f_{\fI,B}^\sharp$ (analogously $f_{\fI,B}^\flat$) has the natural decomposition $f_{\fI,B}^\sharp = \sum_{\tau \in \fS_r} f_{\tau,\fI,B}^\sharp$, where
\begin{align*}
f_{\tau,\fI,B}^\sharp := \begin{cases} f_{\tau,B}^\sharp &\text{if }\tau \in \fI,\\
0 &\text{if } \tau \notin \fI. \end{cases}
\end{align*}
Let $\overline{f}_{\fI,B}^\sharp := \chi_U f_{\fI,B}^\sharp$ and $\overline{f}_{\tau,B}^\sharp := \chi_Uf_{\tau,B}^\sharp$.  Then $\supp \overline{f}_{\tau,B}^\sharp \subseteq 2m\tau$, and thus
$\overline{f}_{\fI,B}^\sharp$ has a well defined broad part.  Finally, we  define
\begin{align*}
\Bil(\fE_r f_B^\flat) := \sum_{\tau_1,\tau_2 \textnormal{ separated}}|\fE_r f_{\tau_1,B}^\flat|^{1/2}|\fE_r f_{\tau_2,B}^\flat|^{1/2}.
\end{align*}

\begin{lemma}\label{lem4.5}
If $(t,x) \in B \cap W$ and $\alpha m$ is sufficiently small, then
\begin{align*}
|\Br_\alpha \fE_rf(t,x)| \leq \sum_{\fI \subseteq \fS_r} |\Br_{10\alpha} \fE_r \overline{f}_{\fI,B}^\sharp(t,x)| + K^{100}\Bil(\fE_r f_B^\flat)(t,x) + R^{-900}\sum_{\tau \in \fS_r}\|f_\tau\|_2.
\end{align*}
\end{lemma}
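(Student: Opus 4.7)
My plan is to analyze the tangent/transverse decomposition at $(t,x) \in B \cap W$. Since any wave packet $T$ containing $(t,x)$ necessarily meets $W \cap B$, and $\bbT_B^\flat \cup \bbT_B^\sharp$ covers exactly the tubes intersecting $W \cap B$, properties (ii)--(iii) of Proposition \ref{prop3.1} give $\fE_r f_\tau(t,x) = \fE_r f_{\tau,B}^\flat(t,x) + \fE_r f_{\tau,B}^\sharp(t,x) + O(R^{-990}\|f_\tau\|_2)$ for every $\tau \in \fS_r$. I may assume $(t,x)$ is $\alpha$-broad for $\fE_r f$ and that $|\fE_r f(t,x)|$ exceeds $R^{-900}\sum_\tau \|f_\tau\|_2$, since otherwise the claim is trivial.

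I then fix a threshold $\beta$ of order $\alpha K^{-2}$ and set $\fJ := \{\tau \in \fS_r : |\fE_r f_{\tau,B}^\flat(t,x)| \geq \beta|\fE_r f(t,x)|\}$. If $\fJ$ contains a pair of separated squares $\tau_1,\tau_2$, then $\Bil(\fE_r f_B^\flat)(t,x) \geq \beta|\fE_r f(t,x)|$, and hence $|\fE_r f(t,x)| \leq \beta^{-1}\Bil(\fE_r f_B^\flat)(t,x) \leq K^{100}\Bil(\fE_r f_B^\flat)(t,x)$, so the bilinear term on the right-hand side of the lemma absorbs the contribution. Otherwise $\fJ$ contains no separated pair, and Lemma \ref{lem4.4} then places $\fJ$ inside a union of four rectangles of dimensions at most $mK^{-1} \times \delta_0$ (or the transpose), which refine into a pairwise disjoint cover $\fJ \subseteq \bigcup_{j=1}^N \rho_j$ with $\rho_j \in \fR_r$ and $N \lesssim m$.

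In the non-bilinear case I define $\fI^* := \{\tau \in \fS_r : \tau \not\subseteq \rho_j \text{ for any } j\}$, so that $\fJ \subseteq \fS_r \setminus \fI^*$. Writing $\fE_r f(t,x) = \sum_{j=1}^N \sum_{\tau \subseteq \rho_j} \fE_r f_\tau(t,x) + \sum_{\tau \in \fI^*} \fE_r f_{\tau,B}^\flat(t,x) + \fE_r \overline{f}_{\fI^*,B}^\sharp(t,x) + \mathrm{err}$, the first sum is bounded by $N\alpha|\fE_r f(t,x)|$ via the broadness of $\fE_r f$ on each $\rho_j$, while the second is controlled by $|\fS_r|\beta|\fE_r f(t,x)| \lesssim K^2\beta|\fE_r f(t,x)| \sim \alpha|\fE_r f(t,x)|$ using $\fI^* \subseteq \fJ^c$ and the choice of $\beta$. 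Since $\alpha m \ll 1$, these combine to at most $\tfrac{1}{2}|\fE_r f(t,x)|$, yielding the lower bound $|\fE_r \overline{f}_{\fI^*,B}^\sharp(t,x)| \geq \tfrac{1}{2}|\fE_r f(t,x)| - \mathrm{err}$.

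The remaining and most delicate step is to confirm that $(t,x)$ is $10\alpha$-broad for $\fE_r \overline{f}_{\fI^*,B}^\sharp$. For each $\rho \in \fR_r$, I split $\sum_{\tau \subseteq \rho,\tau \in \fI^*} \fE_r \overline{f}_{\tau,B}^\sharp(t,x)$ as $\sum_{\tau \subseteq \rho, \tau \in \fI^*}[\fE_r f_\tau(t,x) - \fE_r f_{\tau,B}^\flat(t,x)] + \mathrm{err}$. The $\fE_r f_{\tau,B}^\flat$ piece is bounded by a constant multiple of $\alpha|\fE_r f(t,x)|$ via the threshold $\beta$. The $\fE_r f_\tau$ piece expands as $\sum_{\tau \subseteq \rho} \fE_r f_\tau(t,x) - \sum_j \sum_{\tau \subseteq \rho \cap \rho_j} \fE_r f_\tau(t,x)$; the dyadic structure from the remarks after Theorem \ref{thm2.4} forces each $\rho \cap \rho_j$ to be either empty or another tile in $\fR_r$, so the $\alpha$-broadness of $\fE_r f$ controls each subsum by $\alpha|\fE_r f(t,x)|$, for a total of $(N+1)\alpha|\fE_r f(t,x)|$. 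Dividing by the lower bound on $|\fE_r \overline{f}_{\fI^*,B}^\sharp(t,x)|$ from the preceding paragraph and exploiting $\alpha m \ll 1$ gives $10\alpha|\fE_r \overline{f}_{\fI^*,B}^\sharp(t,x)|$. The main obstacle lies precisely in this last bookkeeping: the $N \lesssim m$ rectangles covering $\fJ$ contribute a multiplicative loss $N\alpha$, so the hypothesis $\alpha m \ll 1$ is exactly what forces the $10\alpha$-broadness through. With broadness confirmed, $|\Br_{10\alpha}\fE_r \overline{f}_{\fI^*,B}^\sharp(t,x)| \gtrsim |\fE_r f(t,x)|$, and the $\fI = \fI^*$ summand dominates the sum on the right-hand side.
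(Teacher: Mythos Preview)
Your argument has a genuine gap in the final broadness check. After refining the four tiles $\sigma_1,\ldots,\sigma_4$ from Lemma \ref{lem4.4} into $N \lesssim m$ disjoint rectangles $\rho_j \in \fR_r$, you bound
\[
\bigg|\sum_{\tau \in \fI^*:\,\tau \subseteq \rho} \fE_r f_\tau(t,x)\bigg| \le \bigg|\sum_{\tau \subseteq \rho} \fE_r f_\tau(t,x)\bigg| + \sum_{j=1}^N\bigg|\sum_{\tau \subseteq \rho \cap \rho_j} \fE_r f_\tau(t,x)\bigg| \le (N+1)\alpha|\fE_r f(t,x)|.
\]
Combined with the flat piece and your lower bound $|\fE_r \overline{f}_{\fI^*,B}^\sharp| \ge \tfrac12|\fE_r f|$, this yields $(2N+O(1))\alpha$-broadness, not $10\alpha$-broadness. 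The hypothesis $\alpha m \ll 1$ only says $(2N+O(1))\alpha \ll 1$; it cannot convert an $O(m\alpha)$ bound into an $O(\alpha)$ one. And the loss is real: if the $\rho_j$ are, say, all in $\fT_{K^{-1},\delta_0,r}$ and one takes $\rho \in \fT_{\delta_0,K^{-1},r}$, then $\rho$ can meet all $N \sim m$ of the $\rho_j$ simultaneously.

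The paper avoids this by \emph{not} refining the $\sigma_i$ into $\fR_r$-tiles. It keeps the four tiles $\sigma_i \in \fT_{\delta,\delta_0,r} \cup \fT_{\delta_0,\delta,r}$ intact and uses two different facts at the two stages. For the lower bound on $|\fE_r \overline{f}_{\fJ^c,B}^\sharp|$, each $\sigma_i$ is a union of $O(m)$ elements of $\fR_{\delta_0,r}$, giving $|\sum_{\tau \subseteq \sigma_i}\fE_r f_\tau| \le Cm\alpha|\fE_r f|$; here $\alpha m \ll 1$ is exactly what is needed. For the broadness check, however, the key observation is that for any $\rho \in \fR_r$ the intersection $\sigma_i \cap \rho$ is itself a \emph{single} element of $\fR_r$ (this is the dyadic nesting property noted after Theorem \ref{thm2.4}), so only five applications of $\alpha$-broadness are needed:
\[
\bigg|\sum_{\tau \in \fJ^c:\,\tau\subseteq\rho}\fE_rf_\tau\bigg| \le \bigg|\sum_{\tau\subseteq\rho}\fE_rf_\tau\bigg| + \sum_{i=1}^4\bigg|\sum_{\tau\subseteq\sigma_i\cap\rho}\fE_rf_\tau\bigg| \le 5\alpha|\fE_rf|.
\]
This keeps the broadness constant independent of $m$. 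Your refinement throws away precisely this collapse of $\sigma_i \cap \rho$ to one tile, which is the mechanism that makes the constant $10$ work.
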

\begin{proof}
We may assume that $(t,x)$ is $\alpha$-broad for $\fE_rf$ and that
\begin{align}\label{ex4.8}
|\fE_r f(t,x)| \geq R^{-900}\sum_\tau \|f_\tau\|_2.
\end{align}
Let
\begin{align*}
\fI := \{\tau \in \fS_r : |\fE_r f(t,x)| \leq K^{100}|\fE_r f_{\tau,B}^\flat(t,x)|\}.
\end{align*}
If $\fI$ contains a pair of separated squares, then the bound $|\Br_\alpha \fE_r f(t,x)| \leq K^{100}\Bil(\fE_r f_B^\flat)(t,x)$ follows immediately.  Thus, we may assume that $\fI$ contains no pair of separated squares.  By Lemma \ref{lem4.4}, there exist $\sigma_1,\ldots,\sigma_4 \in \fT_{\delta,\delta_0,r} \cup \fT_{\delta_0,\delta,r}$, with $K^{-1} \leq \delta \lesssim mK^{-1}$, such that $\tau \subseteq \bigcup_{i=1}^4\sigma_i$ for every $\tau \in \fI$.  Let
\begin{align*}
\fJ := \Big\{\tau \in \fS_r : \tau \subseteq \bigcup_{i=1}^4 \sigma_i\Big\}.
\end{align*}
Then
\begin{align*}
|\fE_r f(t,x)| \leq \sum_{i=1}^4\bigg\vert\sum_{\tau : \tau \subseteq \sigma_i} \fE_rf_\tau(t,x)\bigg\vert + \bigg\vert\sum_{\tau \in \fJ^c} \fE_rf_\tau(t,x)\bigg\vert.
\end{align*}
Since $\delta \lesssim mK^{-1}$, each $\sigma_i$ is a union of at most $Cm$ elements of $\fR_{\delta_0,r}$ where $C$ is a constant.  Thus, since $(t,x)$ is $\alpha$-broad for $\fE_rf$ and $\alpha m$ is sufficiently small, we have
\begin{align*}
\sum_{i=1}^4\bigg\vert\sum_{\tau : \tau \subseteq \sigma_i} \fE_rf_\tau(t,x)\bigg\vert \leq 4Cm\alpha|\fE_r f(t,x)| \leq \frac{1}{10}|\fE_r f(t,x)|,
\end{align*}
and consequently,
\begin{align*}
\frac{9}{10}|\fE_r f(t,x)| \leq \bigg\vert\sum_{\tau \in \fJ^c} \fE_r f_\tau(t,x)\bigg\vert.
\end{align*}
Since $(t,x) \in B \cap W$, properties (iii) and (ii) in Proposition \ref{prop3.1} imply that
\begin{align*}
\fE_r f_\tau(t,x) = \fE_r f_{\tau,B}^\sharp(t,x) + \fE_r f_{\tau,B}^\flat(t,x) + O(R^{-990}\|f_\tau\|_2)
\end{align*}
for every $\tau \in \fS_r$.  Summing over $\tau \in \fJ^c$, we get
\begin{align*}
\bigg\vert\sum_{\tau \in \fJ^c}\fE_r f_\tau(t,x)\bigg\vert \leq |\fE_r f_{\fJ^c,B}^\sharp(t,x)| + |\fE_r f_{\fJ^c,B}^\flat(t,x)| + O\Big(R^{-990}\sum_\tau\|f_\tau\|_2\Big).
\end{align*}
Since $\fJ^c \subseteq \fI^c$, we have
\begin{align*}
|\fE_r f_{\fJ^c,B}^\flat(t,x)| \leq \#\fJ^c K^{-100}|\fE_r f(t,x)| \leq K^{-97}|\fE_rf(t,x)|.
\end{align*}
Hence,
\begin{align*}
\frac{9}{10}|\fE_r f(t,x)| \leq |\fE_r f_{\fJ^c,B}^\sharp(t,x)| + K^{-97}|\fE_r f(t,x)| + O\Big(R^{990}\sum_\tau\|f_\tau\|_2\Big).
\end{align*}
Using \eqref{ex4.8}, we see that
\begin{align*}
|\fE_r f(t,x)| \leq \frac{5}{4}|\fE_r f_{\fJ^c,B}^\sharp(t,x)| = \frac{5}{4}|\fE_r \overline{f}_{\fJ^c,B}^\sharp(t,x)|,
\end{align*}
provided $\varepsilon$ is sufficiently small and $R$ sufficiently large.  To finish the proof, we will show that $(t,x)$ is $10\alpha$-broad for $\fE_r \overline{f}_{\fJ^c,B}^\sharp$.  It suffices to show that
\begin{align}\label{ex4.9}
\bigg\vert\sum_{\tau \in \fJ^c : \tau \subseteq \rho}\fE_r \overline{f}_{\tau,B}^\sharp(t,x)\bigg\vert \leq 8\alpha|\fE_rf(t,x)|
\end{align}
for every $\rho \in \fR_r$.  Fixing $\rho \in \fR_r$, we have
\begin{align*}
\bigg\vert \sum_{\tau \in \fJ^c : \tau \subseteq \rho} \fE_r \overline{f}_{\tau,B}^\sharp(t,x)\bigg\vert &= \bigg\vert \sum_{\tau \in \fJ^c : \tau \subseteq \rho} \fE_r f_{\tau,B}^\sharp(t,x)\bigg\vert\\ 
&\leq \bigg\vert\sum_{\tau \in \fJ^c : \tau \subseteq \rho} \fE_r f_\tau(t,x)\bigg\vert + \bigg\vert\sum_{\tau \in \fJ^c : \tau \subseteq \rho} \fE_r f_{\tau,B}^\flat(t,x)\bigg\vert + O\Big(R^{-990}\sum_\tau \|f_\tau\|_2\Big).
\end{align*}
As above, $\fJ^c \subseteq \fI^c$ implies that
\begin{align*}
\bigg\vert\sum_{\tau \in \fJ^c : \tau \subseteq \rho} \fE_r f_{\tau,B}^\flat(t,x)\bigg\vert \leq K^{-97}|\fE_rf(t,x)| \leq \alpha|\fE_r f(t,x)|.
\end{align*}
It is straightforward to check that $\sigma_i \cap \rho \in \fR_r$ for each $i = 1,\ldots,4$.  Thus, since $(t,x)$ is $\alpha$-broad for $\fE_rf$, we have
\begin{align*}
\bigg\vert\sum_{\tau \in \fJ^c : \tau \subseteq \rho} \fE_r f_\tau(t,x)\bigg\vert \leq \bigg\vert\sum_{\tau :\tau \subseteq \rho} \fE_r f_\tau(t,x)\bigg\vert + \sum_{i=1}^4\bigg\vert\sum_{\tau : \tau \subseteq \sigma_i \cap \rho} \fE_r f_\tau(t,x)\bigg\vert \leq 5\alpha|\fE_rf(t,x)|.
\end{align*}
Using the preceding three estimates and \eqref{ex4.8}, we arrive at \eqref{ex4.9}.
\end{proof}

If $\alpha m \gtrsim 1$ so that Lemma \ref{lem4.5} does not apply, then the estimate in Theorem \ref{thm2.4} holds trivially, since the power of $R$ can then be made at least $1000$ by taking $\varepsilon$ sufficiently small.  Thus, we may assume that $\alpha m \ll 1$.  We now apply Lemma \ref{lem4.5} to \eqref{ex4.8*} to get
\begin{align}\label{ex4.11}
\notag\int_{B_R} |\Br_\alpha \fE_r f|^{13/4} \lesssim \sum_{B \in \fB}\sum_{\fI \subseteq \fS_r}\int_{B \cap W} |\Br_{10\alpha}\fE_r \overline{f}_{\fI,B}^\sharp|^{13/4} + \sum_{B \in \fB}\int_{B \cap W} \Bil(\fE_rf_B^\flat)^{13/4}\\ + R^{-1000}\Big(\sum_{\tau \in \fS_r}\|f_\tau\|_2\Big)^{13/4};
\end{align}
note that the implicit constant may (and does) depend on $K$, a function of $\varepsilon$.  If the last term dominates in \eqref{ex4.11}, then the estimate in Theorem \ref{thm2.4} holds trivially.

\subsubsection{Transverse subcase} Suppose that the first term dominates in \eqref{ex4.11}, so that
\begin{align}\label{ex4.12}
\int_{B_R} |\Br_\alpha \fE_r f|^{13/4} \lesssim \sum_{B \in \fB}\sum_{\fI \subseteq \fS_r}\int_B |\Br_{10\alpha}\fE_r \overline{f}_{\fI,B}^\sharp|^{13/4}.
\end{align}
Each ball $B \in \fB$ has radius $R^{1-\delta}$, so by induction on $R$, we can apply Theorem \ref{thm2.4} to each summand in \eqref{ex4.12}, whenever \eqref{ex2.1} holds.  Since \eqref{ex2.1} holds for $f$, Lemma \ref{lem3.2} gives
\begin{align*}
\dashint_{D(\xi,R^{-1/2})}|\overline{f}_{\tau,B}^\sharp|^2 \leq \dashint_{D(\xi,R^{-1/2})}|f_{\tau,B}^\sharp|^2 \lesssim \dashint_{D(\xi,100R^{-1/2})}|f_\tau|^2 \lesssim 1.
\end{align*}
Thus, after multiplying by a constant, Theorem \ref{thm2.4} implies that
\begin{align*}
\int_{B_R}|\Br_\alpha \fE_r f|^{13/4} \lesssim \sum_{B \in \fB}\sum_{\fI \subseteq \fS_r}C_\varepsilon' R^{(1-\delta)\varepsilon}R^{\varepsilon^6\log(40 K^\varepsilon \alpha m^2)}\Big(\sum_{\tau \in \fS_r}\|\overline{f}_{\tau,B}^\sharp\|_2^2\Big)^{3/2+\varepsilon}. 
\end{align*}
By Lemma \ref{lem4.3}, each $T \in \bbT$ belongs to at most $D^{O(1)}$ sets $\bbT_B^\sharp$.  Therefore, by Lemma \ref{lem3.2}, we have
\begin{align*}
\sum_{B \in \fB}\Big(\sum_{\tau\in\fS_r}\|\overline{f}_{\tau,B}^\sharp\|_2^2\Big)^{3/2+\varepsilon} \leq \Big(\sum_{\tau \in \fS_r}\sum_{B \in \fB}\|f_{\tau,B}^\sharp\|_2^2\Big)^{3/2+\varepsilon} \lesssim D^{O(1)}\Big(\sum_{\tau \in \fS_r}\|f_\tau\|_2^2\Big)^{3/2+\varepsilon}.
\end{align*}
Since $\delta = \varepsilon^2$, $D = R^{\varepsilon^4}$, and the number of subsets $\fI \subseteq \fS_r$ depends only on $K$, we have altogether
\begin{align*}
\int_{B_R}|\Br_\alpha \fE_r f|^{13/4} \leq CR^{-\varepsilon^3+\varepsilon^6\log(40)+O(\varepsilon^4)}C_\varepsilon' R^{\varepsilon+\varepsilon^6\log(K^\varepsilon \alpha m^2)}\Big(\sum_{\tau \in \fS_r}\|f_\tau\|_2^2\Big)^{3/2+\varepsilon}
\end{align*}
for some $C$ (depending on $\varepsilon$).  The power of the first $R$ is negative for $\varepsilon$ sufficiently small, and then the induction closes for $R$ sufficiently large.

\subsubsection{Tangent subcase} In the remaining case, the second term in \eqref{ex4.11} dominates, whence
\begin{align*}
\int_{B_R} |\Br_\alpha \fE_r f|^{13/4} \lesssim \sum_{B \in \fB}\int_{B \cap W} \Bil(\fE_r f_B^\flat)^{13/4}.
\end{align*}
We will bound the right-hand side directly (i.e.~without induction) using basically standard bilinear restriction techniques and Lemma \ref{lem4.3}.  Since $\#\fB = R^{O(\delta)} \leq R^\varepsilon$, it will suffice to prove the following:

\begin{proposition}\label{prop4.7}
For every $B \in \fB$, we have
\begin{align*}
\int_{B \cap W} \Bil(\fE_r f_B^\flat)^{13/4} \lesssim R^{O(\delta)}\Big(\sum_{\tau \in \fS_{r}}\|f_\tau\|_2^2\Big)^{3/2}.
\end{align*}
\end{proposition}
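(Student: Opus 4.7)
The plan is to follow Guth's bilinear argument for the paraboloid \cite{Guth}, replacing the planar transversality with the hyperboloid-specific transversality from Lemma \ref{lem4.4*}(b), and exploiting the tangent tile/tube geometry developed in Section 2. The argument has three stages: a convexity reduction to a single separated pair, a bilinear estimate on the tangent slab, and summation over the $O(K^{O(1)})$ pairs.

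For the reduction, I would use the convexity bound $(\sum_{i=1}^N a_i)^{13/4} \leq N^{9/4} \sum_i a_i^{13/4}$ with $N \leq (\#\fS_r)^2 = O(K^4)$ to obtain
\[
\Bil(\fE_r f_B^\flat)^{13/4} \lesssim K^{O(1)} \sum_{\tau_1, \tau_2 \text{ separated}} |\fE_r f_{\tau_1, B}^\flat|^{13/8} |\fE_r f_{\tau_2, B}^\flat|^{13/8},
\]
which reduces the problem to proving, for each separated pair, the single-pair bound
\[
\int_{B \cap W} |\fE_r f_{\tau_1, B}^\flat \fE_r f_{\tau_2, B}^\flat|^{13/8} \lesssim R^{O(\delta)} (\|f_{\tau_1, B}^\flat\|_2 \|f_{\tau_2, B}^\flat\|_2)^{3/2}.
\]
To establish this single-pair bound I would combine three ingredients: (a) the bilinear $L^2$ estimate $\|\fE_r f_{\tau_1} \fE_r f_{\tau_2}\|_{L^2(\R^3)} \lesssim K^{O(1)} \|f_{\tau_1}\|_2 \|f_{\tau_2}\|_2$ for separated caps, which follows from Plancherel plus a Jacobian computation whose non-degeneracy is exactly the lower bound in Lemma \ref{lem4.4*}(b); (b) the slab volume bound $|B \cap W| \lesssim R^{5/2+O(\delta)}$, valid since $Z(P) \cap B$ is a 2-dimensional algebraic surface of degree $\leq D = R^{\varepsilon^4}$; and (c) an $L^\infty$ bound from the tangent geometry: by Lemma \ref{lem4.3}(b), $\bbT_B^\flat$ involves only $R^{1/2+O(\delta)}$ directions, so $f_{\tau,B}^\flat$ has Fourier support of area $\lesssim R^{-1/2+O(\delta)}$, whence $\|\fE_r f_{\tau,B}^\flat\|_\infty \lesssim R^{-1/4+O(\delta)} \|f_{\tau,B}^\flat\|_2$ by Cauchy-Schwarz. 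Finally, summing over separated pairs uses Lemma \ref{lem3.2} to replace $\|f_{\tau,B}^\flat\|_2$ with $\|f_\tau\|_2$ (up to constants) together with the $\ell^{3/2} \hookrightarrow \ell^2$ inclusion $\sum_\tau \|f_\tau\|_2^{3/2} \leq (\#\fS_r)^{1/4} (\sum_\tau \|f_\tau\|_2^2)^{3/4}$, whose square produces the desired $(\sum_\tau \|f_\tau\|_2^2)^{3/2}$ with an extra $K^{O(1)}$ factor.

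The main obstacle is the numerology in the single-pair step: a naive Hölder interpolation among the three ingredients leaves a residual polynomial-in-$R$ loss. To close the gap, I expect to need a finer wave-packet analysis, decomposing the bilinear $L^2$ norm on $B \cap W$ into contributions from individual pairs of tangent tubes $(T_1, T_2) \in \bbT_B^\flat \times \bbT_B^\flat$ using Proposition \ref{prop3.1}(iv), and exploiting that tangent tubes with distinct base-point tangent planes of $Z(P)$ are genuinely transverse in a Kakeya-like sense. Bounding the resulting tube-intersection counts inside $B \cap W$ through the direction bound from Lemma \ref{lem4.3}(b) should recover the sharp $R^{O(\delta)}$ loss, mirroring the corresponding tangent-tube argument of Guth \cite{Guth}.
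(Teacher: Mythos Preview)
Your outline has the right ingredients list but two concrete gaps prevent it from closing.

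First, the target bound is not homogeneous in $f$: the left side scales like $\lambda^{13/4}$ and the right side like $\lambda^{3}$ under $f\mapsto\lambda f$. Hence no combination of your (a), (b), (c)---all of which are homogeneous---can ever reach it; you must invoke the normalization \eqref{ex2.1}. The paper does use Lemma~\ref{lem4.3}(b), but not to bound $\|\fE_r f_{\tau,B}^\flat\|_\infty$ as you suggest. Rather, since $f_{\tau,B}^\flat$ is supported in at most $R^{1/2+O(\delta)}$ discs $\theta$, and on each such disc $\int_\theta|f_{\tau,B}^\flat|^2\lesssim\int_{10\theta}|f_\tau|^2\lesssim R^{-1}$ by \eqref{ex2.1}, one obtains $\|f_{\tau,B}^\flat\|_2\lesssim R^{-1/4+O(\delta)}$. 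Combining this with the trivial bound $\|f_{\tau,B}^\flat\|_2\lesssim\|f_\tau\|_2$ gives $\|f_{\tau,B}^\flat\|_2\lesssim\|f_\tau\|_2^{3/p}R^{-\frac14(1-3/p)+O(\delta)}$ for $p\ge 3$, which is exactly what repairs the homogeneity mismatch at $p=13/4$.

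Second, even granting (a), your numerology does not close: interpolating a global bilinear $L^2$ bound, the slab volume, and your $L^\infty$ bound leaves a loss of order $R^{15/32}$, as you noticed. The paper does \emph{not} use a global bilinear $L^2$ estimate together with $|B\cap W|$. Instead it proves, on each $R^{1/2}$-cube $Q\subset B\cap W$, an $L^4$ bilinear estimate with a genuine $R^{-1/2}$ gain (Lemma~\ref{lem4.8}). The mechanism is Plancherel followed by a count of resonant quadruples $(T_1,\overline{T}_1,T_2,\overline{T}_2)$: the tangent condition forces all directions $v(T)$ with $T\in\bbT_{B,Q}^\flat$ to lie within $R^{2\delta-1/2}$ of a single plane, so $\nabla\phi_r(\overline{\xi})$ lies near the line through $\nabla\phi_r(\xi)$ and $\nabla\phi_r(\zeta)$; separately, the resonance equation $\phi_r(\xi)+\phi_r(\zeta)=\phi_r(\overline{\xi})+\phi_r(\xi+\zeta-\overline{\xi})$ confines $\overline{\xi}$ to a level curve. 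These two curves meet transversally precisely because of Lemma~\ref{lem4.4*}(b), pinning $\overline{\xi}$ to an $O(R^{2\delta-1/2})$-neighborhood of $\xi$. Summing over $Q$ gives the $L^4$ bound on $B\cap W$; interpolating with the trivial $\|\fE_r g\|_{L^2(B_R)}\lesssim R^{1/2}\|g\|_2$ yields the $L^p$ bound \eqref{ex4.22}, and inserting the $\|f_{\tau,B}^\flat\|_2$ bound from the previous paragraph at $p=13/4$ finishes. Your ``Kakeya-like transversality of tangent tubes'' is pointing in this direction, but the decisive point is not a tube-intersection count: it is the two-curve transversality above, which turns the $L^4$ expansion into an almost-diagonal sum.
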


We will need a preliminary lemma.  Fix $B \in \fB$ and let $\fQ$ be a collection of cubes $Q$ of side length $R^{1/2}$ that cover $B \cap W$ with bounded overlap.  For each $Q \in \fQ$, let
\begin{align*}
\bbT_{B,Q}^\flat := \{T \in \bbT_B^\flat : T \cap Q \neq \emptyset\}.
\end{align*}
Henceforth, we will write ``negligible" in place of any quantity of size $O(R^{-990}\sum_{\tau \in \fS_r}\|f_\tau\|_2)$.  In particular, if $(t,x) \in Q$, then
\begin{align}\label{ex4.15}
\fE_r f_{\tau,B}^\flat(t,x) = \sum_{T \in \bbT_{B,Q}^\flat}\fE_r f_{\tau,T}(t,x) + \negl.
\end{align}
It will suffice to bound $\|\Bil(\fE_rf_B^\flat)\|_{L^{13/4}(Q)}$ for each $Q \in \fQ$.  Informally, the tubes in $\bbT_{B,Q}^\flat$ are tangent to $W$ at $Q$ and are thus coplanar.  Dually, the wave packets $\{\fE_rf_{\tau,T}\}_{T \in \bbT_{B,Q}^\flat}$ have Fourier support near a curve formed by the intersection of $\Sigma_r$ and a plane.  Thus, estimating $\|\Bil(\fE_rf_B^\flat)\|_{L^{13/4}(Q)}$ is essentially a two-dimensional bilinear restriction problem, making the $L^4$ argument a natural approach (as done in \cite{Guth}, of course).
\begin{lemma}\label{lem4.8}
For all $Q \in \fQ$ and separated $\tau_1, \tau_2 \in \fS_r$, we have
\begin{align*}
\int_Q |\fE_r f_{\tau_1,B}^\flat|^2|\fE_r f_{\tau_2,B}^\flat|^2 \lesssim R^{O(\delta)-1/2}\bigg(\sum_{T_1 \in \bbT_{B,Q}^\flat}\|f_{\tau_1,T_1}\|_2^2\bigg)\bigg(\sum_{T_2 \in \bbT_{B,Q}^\flat}\|f_{\tau_2,T_2}\|_2^2\bigg) + \negl.
\end{align*}
\end{lemma}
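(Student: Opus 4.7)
The plan is to execute the Córdoba–Fefferman bilinear $L^4$ argument, adapted so that the tangency hypothesis on $\bbT_B^\flat$ supplies an extra $R^{-1/2}$. Set $F_i := \fE_r f_{\tau_i,B}^\flat$. As a first step, I would use \eqref{ex4.15} to replace $F_i$ on $Q$ by $\sum_{T \in \bbT_{B,Q}^\flat} \fE_r f_{\tau_i,T}$ up to negligible error; so for the purpose of estimating $\int_Q |F_1|^2|F_2|^2$, only wave packets whose tubes intersect $Q$ contribute. By part (b) of Lemma \ref{lem4.3}, the set of discs $\theta \in \Theta$ for which $\bbT(\theta) \cap \bbT_{B,Q}^\flat \neq \emptyset$ has cardinality at most $R^{O(\delta)+1/2}$, and the centers $c_\theta$ of those discs are confined to an $R^{-1/2+O(\delta)}$-neighborhood of a one-dimensional curve in $U$ (the preimage under the Gauss map of the tangent plane to $Z(P)$ at $Q$). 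Accordingly, the relevant part of $f_{\tau_i,B}^\flat$ is concentrated in a thin strip $N_i \subseteq 2m\tau_i$ of area $\lesssim R^{-1/2+O(\delta)}$.

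Next, I would introduce a Schwartz bump $\psi_Q$ with $\psi_Q \geq \chi_Q$ whose Fourier transform is (essentially) supported in an $R^{-1/2}$-ball, and bound
\[
\int_Q |F_1|^2|F_2|^2 \;\leq\; \|\psi_Q^{1/2}\, F_1\overline{F_2}\|_{L^2(\R^3)}^{\,2}.
\]
Expanding $F_1\overline{F_2}$ as a double integral over $(\xi_1,\xi_2) \in 2m\tau_1 \times 2m\tau_2$ and applying Plancherel in $(t,x)$, the right-hand side becomes
\[
\iiiint f_{\tau_1,B}^\flat(\xi_1)\,\overline{f_{\tau_2,B}^\flat(\xi_2)}\,\overline{f_{\tau_1,B}^\flat(\xi_1')}\,f_{\tau_2,B}^\flat(\xi_2')\; \widehat{|\psi_Q|^2}\!\bigl(\Psi(\xi_1,\xi_2) - \Psi(\xi_1',\xi_2')\bigr)\,d\xi\,d\xi',
\]
where $\Psi(\xi_1,\xi_2) := (\phi_r(\xi_1),\xi_1) - (\phi_r(\xi_2),\xi_2)$. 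The kernel $\widehat{|\psi_Q|^2}$ is essentially a bump of height $|Q|$ on an $R^{-1/2}$-ball. Using the implicit-function theorem, the constraint $\Psi(\xi_1,\xi_2) = \Psi(\xi_1',\xi_2')$ (modulo $R^{-1/2}$) forces $\xi_1 - \xi_1' = \xi_2 - \xi_2' =: \eta$ to lie on a line perpendicular to $\nabla\phi_r(\xi_1) - \nabla\phi_r(\xi_2)$. By Lemma \ref{lem4.4*}(b) together with the separation condition $\ldist(r\xi_1, r\xi_2) \gtrsim C_0 r m K^{-1}$, one has $|\nabla\phi_r(\xi_1) - \nabla\phi_r(\xi_2)| \gtrsim_{K,m} 1$ uniformly on $2m\tau_1 \times 2m\tau_2$, which provides the transverse lower bound on the relevant Jacobian and reduces the four-dimensional $\xi$-integral to a bounded integral over the diagonal $\{\xi = \xi'\}$ weighted by $|f_{\tau_1,B}^\flat|^2|f_{\tau_2,B}^\flat|^2$.

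The crucial $R^{-1/2+O(\delta)}$ factor then comes from the tangency: because each $f_{\tau_i,B}^\flat$ is supported in the strip $N_i$ of area $R^{-1/2+O(\delta)}$, after Cauchy–Schwarz in $\xi_i$ one gains exactly one such factor. Finally, property (iv) of Proposition \ref{prop3.1} controls the inner products between distinct wave packets by negligible error, and property (v) (or, equivalently, \eqref{ex4.15} and Lemma \ref{lem3.2}) converts $\|f_{\tau_i,B}^\flat\|_2^{\,2}$ into $\sum_{T_i \in \bbT_{B,Q}^\flat} \|f_{\tau_i,T_i}\|_2^{\,2}$.

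The main obstacle I anticipate is Step 3–4: simultaneously tracking the transversality from Lemma \ref{lem4.4*}(b) and the one-dimensional frequency localization from Lemma \ref{lem4.3}(b) while keeping the Plancherel bookkeeping honest. In particular, the signed, indefinite form appearing in Lemma \ref{lem4.4*}(b) must genuinely be bounded below on separated pairs (rather than merely controlled in absolute value), and the $R^{-1/2+O(\delta)}$ gain must be extracted from the area of $N_i$ rather than from the angle estimate alone. Quantifying the error terms (tube-cross terms controlled via the rapid decay of $\widehat{\psi_Q}$ and property (iv) of Proposition \ref{prop3.1}) and confirming that they are genuinely negligible is the remaining bookkeeping.
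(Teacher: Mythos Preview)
Your overall strategy (Plancherel on $\psi_Q F_1\overline{F_2}$, then exploit the resonance constraint together with the tangency strip) is sound and is morally the same $L^4$ argument the paper runs. But there is a genuine gap in the step you call ``reduces the four-dimensional $\xi$-integral to a bounded integral over the diagonal.'' The bound you extract from Lemma~\ref{lem4.4*}(b), namely $|\nabla\phi_r(\xi_1)-\nabla\phi_r(\xi_2)|\gtrsim 1$, only guarantees that the constraint $\eta\cdot(\nabla\phi_r(\xi_1)-\nabla\phi_r(\xi_2))=O(R^{-1/2})$ defines a slab of the correct width; it does \emph{not} force $\eta\approx 0$. You are still left with a one-parameter family of $\eta$ running in the direction perpendicular to $v:=\nabla\phi_r(\xi_1)-\nabla\phi_r(\xi_2)$, i.e.\ along the tangent to the curve $Z$ in the paper's notation. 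To pin $\eta$ down to an $O(R^{-1/2+O(\delta)})$ ball you must intersect this slab with the tangency strip $N_1-N_1$, whose long direction is the tangent to $\tilde\ell$, namely $(\nabla^2\phi_r(\xi_1))^{-1}v$. The transversality of these two directions is exactly the lower bound on $|\langle(\nabla^2\phi_r(\xi_1))^{-1}v,\,v\rangle|$, which is the \emph{full} content of Lemma~\ref{lem4.4*}(b), not merely $|v|\gtrsim 1$. Because the Hessian here is indefinite, $|v|\gtrsim 1$ does not imply this quadratic-form bound (indeed $(\nabla^2\phi_r)^{-1}v$ can be orthogonal to $v$), so the gap is real. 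You flag precisely this issue in your closing paragraph as the ``main obstacle,'' but it is not addressed in the argument itself; consequently the sentence ``after Cauchy--Schwarz in $\xi_i$ one gains exactly one such factor'' has no mechanism behind it.

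For comparison, the paper organizes the same ingredients differently: it decomposes into wave packets first, then for fixed $(T_1,T_2)$ counts the contributing $(\overline T_1,\overline T_2)$ by showing $\overline\xi$ lies within $O(R^{2\delta-1/2})$ of both $Z$ and $\tilde\ell$, and invokes the full Lemma~\ref{lem4.4*}(b) to conclude that $Z$ and $\tilde\ell$ meet transversally at $\xi$, giving only $R^{O(\delta)}$ admissible quadruples. The factor $R^{-1/2}$ then comes \emph{separately} from the convolution estimate Lemma~\ref{lem4.12}, which uses only the tube-angle bound $\angle(v(T_1),v(T_2))\gtrsim 1$. So in the paper the tangency is spent on the counting and the $R^{-1/2}$ comes from tube transversality, whereas in your scheme you would spend tube transversality on the slab width and tangency on the $R^{-1/2}$; either bookkeeping works, but both require the quadratic-form lower bound at the crux.
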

\begin{proof}
Let $\psi_Q$ be a smooth function satisfying $\chi_Q \leq \psi_Q \leq \chi_{2Q}$ and
\begin{align*}
|\hat{\psi}_Q(\tau,\xi)| \lesssim R^{3/2}(1+|(\tau,\xi)|R^{1/2})^{-10^6/\delta}.
\end{align*}
By \eqref{ex4.15} and Plancherel's theorem, we have
\begin{align}\label{ex4.16}
\notag\int_Q|\fE_r f_{\tau_1,B}^\flat|^2|\fE_rf_{\tau_2,B}^\flat|^2 &= \sum_{T_1,\overline{T}_1,T_2,\overline{T}_2 \in \bbT_{B,Q}^\flat} \int_Q \fE_rf_{\tau_1,T_1}\overline{\fE_rf_{\tau_1,\overline{T}_1}}\fE_r f_{\tau_2,T_2}\overline{\fE_rf_{\tau_2,\overline{T}_2}} + \negl\\
&\leq \sum_{T_1,\overline{T}_1,T_2,\overline{T}_2 \in \bbT_{B,Q}^\flat} \int_{\R^3}(\hat{\psi}_Q \ast d\sigma_{\tau_1,T_1} \ast d\sigma_{\tau_2,T_2})(\overline{d\sigma_{\tau_1,\overline{T}_1}\ast d\sigma_{\tau_2,\overline{T}_2}}) + \negl,
\end{align}
where $d\sigma_{\tau,T}$ is the measure on $\Sigma_r$ defined by
\begin{align}\label{measure}
\int_{\Sigma_r} g d\sigma_{\tau,T} := \int_U g(\phi_r(\xi),\xi)f_{\tau,T}(\xi)d\xi.
\end{align}

Fix $T_1,\overline{T}_1,T_2,\overline{T}_2 \in \bbT_{B,Q}^\flat$ and let $\xi,\overline{\xi},\zeta,\overline{\zeta}$ denote the centers of $\theta(T_1),\theta(\overline{T}_1),\theta(T_2),\theta(\overline{T}_2)$, respectively.  The rapid decay of $\hat{\psi}_Q$ and the fact that $\supp \chi_Uf_{\tau,T} \subseteq \frac{3}{2}m\tau$ for every $\tau \in \fS_r$ and $T \in \bbT$ imply that the contribution of $T_1,\overline{T}_1,T_2,\overline{T}_2$ to \eqref{ex4.16} is negligible unless
\begin{align*}
\xi+\zeta &= \overline{\xi} + \overline{\zeta} + O(R^{\delta-1/2}),\\
\phi_r(\xi) + \phi_r(\zeta) &= \phi_r(\overline{\xi}) + \phi_r(\overline{\zeta}) + O(R^{\delta-1/2}),
\end{align*}
and $\xi,\overline{\xi} \in 2m\tau_1$ and $\zeta,\overline{\zeta} \in 2m\tau_2$.  We need to estimate the number of non-negligible terms in \eqref{ex4.16} involving given tubes $T_1,T_2$.

Toward that end, we adapt some techniques of Cho--Lee \cite{Cho--Lee} and Lee \cite{Lee}.  Assuming $T_1,\overline{T}_1,T_2,\overline{T}_2$ contribute non-negligibly, then
\begin{align}\label{ex4.17}
\phi_r(\xi) + \phi_r(\zeta) = \phi_r(\overline{\xi}) + \phi_r(\xi+\zeta-\overline{\xi}) + O(R^{\delta-1/2}).
\end{align}
We define a function $\Psi : U \rightarrow \R$ by
\begin{align*}
\Psi(\eta) := \phi_r(\eta) + \phi_r(\xi+\zeta-\eta) - \phi_r(\xi)-\phi_r(\zeta)
\end{align*}
and denote by $Z := \Psi^{-1}(0)$ its zero set.  We claim that $|\nabla\Psi| \gtrsim 1$ on $2m\tau_1$.  Indeed, if $\eta \in 2m\tau_1$, then by the Cauchy--Schwarz inequality, boundedness of $\|(\nabla^2 \phi)^{-1}\|$ on $U$, part (b) of Lemma \ref{lem4.4*}, and finally the separation of $\tau_1$ and $\tau_2$, we have
\begin{align*}
|\nabla\phi_r(\eta) - \nabla\phi_r(\zeta)| &= r^{-1}|\nabla\phi(r\eta)-\nabla\phi(r\zeta)|\\
&\gtrsim r^{-1}|\langle(\nabla^2\phi(r\eta))^{-1}(\nabla\phi(r\eta)-\nabla\phi(r\zeta)),\nabla\phi(r\eta)-\nabla\phi(r\zeta)\rangle|^{1/2}\\
&\sim r^{-1}\ldist(r\eta,r\zeta)\\
&\geq C_0mK^{-1},
\end{align*}
whence
\begin{align*}
|\Psi(\eta)| = |\nabla\phi_r(\eta) - \nabla\phi_r(\xi+\zeta-\eta)| \geq |\nabla\phi_r(\zeta)-\nabla\phi_r(\zeta)| - \|\phi\|_{C^1(U)}\diam(2m\tau_1) \gtrsim 1
\end{align*}
if $C_0$ is sufficiently large.  By the claim, $Z$ is a smooth curve near $\xi$, and \eqref{ex4.17} and a Taylor approximation argument imply that
\begin{align}\label{ex4.18}
\dist(\overline{\xi},Z) \lesssim R^{\delta-1/2}
\end{align}
for $R$ sufficiently large.  As mentioned above, tubes in $\bbT_{B,Q}^\flat$ are nearly coplanar.  Inspecting the definition, it is straightforward to check that $\angle(v(T),T_z Z(P)) \leq R^{2\delta-1/2}$ for all $T \in \bbT_{B,Q}^\flat$ and some (nonsingular) $z \in 2R^\delta Q \cap Z(P)$.  Thus, dually, there exists a plane $\Pi$ through the origin such that $\dist((-1,\nabla\phi_r(\eta)),\Pi) \lesssim R^{2\delta-1/2}$ for each $\eta \in \{\xi,\overline{\xi},\zeta\}$.  Consequently, there exists a line whose $O(R^{2\delta-1/2})$-neighborhood contains $\nabla\phi_r(\xi)$, $\nabla\phi_r(\overline{\xi})$, and $\nabla\phi_r(\zeta)$.  Since $|\nabla\phi_r(\xi)-\nabla\phi_r(\zeta)| \gtrsim 1$ due to the separation of $\tau_1$ and $\tau_2$, it follows that $\nabla\phi_r(\overline{\xi})$ lies in an $O(R^{2\delta-1/2})$-neighborhood of the line $\ell$ containing $\nabla\phi_r(\xi)$ and $\nabla\phi_r(\zeta)$.  We consider now the smooth curve $\tilde{\ell} := (\nabla\phi_r)^{-1}(\ell \cap 3U)$, noting that $\nabla\phi$ (and thus $\nabla\phi_r$) is invertible near the origin since $\det\nabla^2\phi(0) \neq 0$.  It contains $\xi$ by construction, and the boundedness of $\|(\nabla^2\phi)^{-1}\|$ implies that
\begin{align}\label{ex4.19}
\dist(\overline{\xi},\tilde{\ell}) \lesssim R^{2\delta-1/2}.
\end{align}
Crucially, $\tilde{\ell}$ and $Z$ intersect transversely at $\xi$.  Indeed, parametrizing $\tilde{\ell}$ by 
\begin{align*}
\tilde{\ell}(t) := (\nabla\phi_r)^{-1}((1-t)\nabla\phi_r(\xi) + t\nabla\phi_r(\zeta)),
\end{align*}
the tangent line to $\tilde{\ell}$ at $\xi$ is parallel to
\begin{align*}
\frac{d}{dt}\tilde{\ell}(t)\bigg\vert_{t=0} = (\nabla^2\phi_r(\xi))^{-1}(\nabla\phi_r(\zeta)-\nabla\phi_r(\xi)),
\end{align*}
and the normal line to $Z$ at $\xi$ is parallel to $\nabla\Psi(\xi) = \nabla\phi_r(\xi) - \nabla\phi_r(\zeta)$.  Thus, the bound
\begin{align*}
|\langle(\nabla^2\phi_r(\xi))^{-1}(\nabla\phi_r(\xi)-\nabla\phi_r(\zeta)),\nabla\phi_r(\xi)-\nabla\phi_r(\zeta)\rangle| \gtrsim 1,
\end{align*}
which follows from part (b) of Lemma \ref{lem4.4*} and the separation of $\tau_1$ and $\tau_2$, implies the claimed transverse intersection.  Consequently, by \eqref{ex4.18} and \eqref{ex4.19}, we have $|\xi - \overline{\xi}| \lesssim R^{2\delta-1/2}$.  A similar argument shows that $|\zeta - \overline{\zeta}| \lesssim R^{2\delta-1/2}$.  Since $\#(\bbT_{B,Q}^\flat \cap \bbT(\theta)) \lesssim 1$ for every $\theta \in \Theta$, it follows that for each $T_1,T_2 \in \bbT_{B,Q}^\flat$, there are $O(R^{8\delta})$ pairs $\overline{T}_1,\overline{T}_2 \in \bbT_{B,Q}^\flat$ such that $T_1,\overline{T_1},T_2,\overline{T}_2$ contribute non-negligibly to \eqref{ex4.16}.

Hence, by the Cauchy--Schwarz inequality (a few times) and Young's inequality, \eqref{ex4.16} is at most
\begin{align}\label{ex4.20}
R^{O(\delta)}\sum_{T_1,T_2 \in \bbT_{B,Q}^\flat} \int_{\R^3}|d\sigma_{\tau_1,T_1}\ast d\sigma_{\tau_2,T_2}|^2 + \negl.
\end{align}
To estimate the convolution, we use Plancherel's theorem and the familiar wave packet approximation
\begin{align}\label{ex4.21}
|\fE_r g_T| \approx R^{-1/2}\|g_T\|_2\chi_T;
\end{align}
we will give a rigorous argument in Lemma \ref{lem4.12}, appearing at the end of the article.  If $T_1,T_2 \in \bbT$ are such that $3\theta(T_i) \cap \tau_i \neq \emptyset$, then the separation of $\tau_1$ and $\tau_2$ implies that the directions $v(T_1)$ and $v(T_2)$ are transverse and consequently that $|T_1 \cap T_2| \lesssim R^{3\delta + 3/2}$.  Hence, by Plancherel's theorem and \eqref{ex4.21}, we (essentially) have 
\begin{align*}
\int_{\R^3}|d\sigma_{\tau_1,T_1} \ast d\sigma_{\tau_2,T_2}|^2 = \int_{\R^3}|\fE_r f_{\tau_1,T_1}\fE_rf_{\tau_2,T_2}|^2 \lesssim R^{3\delta-1/2}\|f_{\tau_1,T_1}\|_2^2\|f_{\tau_2,T_2}\|_2^2.
\end{align*}
Plugging this estimate into \eqref{ex4.20}, we obtain the lemma.
\end{proof}

Given Lemma \ref{lem4.8}, the rest of the proof of Proposition \ref{prop4.7} very closely resembles the corresponding argument in \cite{Guth}.  For the convenience of the reader, we repeat the details here.  We set
\begin{align*}
S_{\tau,B}^\flat := \bigg(\sum_{T \in \bbT_B^\flat} (R^{-1/2}\|f_{\tau,T}\|_2\chi_{2T})^2\bigg)^{1/2}
\end{align*}
(cf.~\eqref{ex4.21}).  Let $\tau_1, \tau_2 \in \fS_r$ be separated squares.  Lemma \ref{lem4.8} implies that
\begin{align*}
\int_Q |\fE_r f_{\tau_1,B}^\flat|^2|\fE_r f_{\tau_2,B}^\flat|^2 \lesssim R^{O(\delta)}\int_Q (S_{\tau_1,B}^\flat)^2(S_{\tau_2,B}^\flat)^2 + \negl.
\end{align*}
Summing over $Q \in \fQ$ and exploiting the separation of $\tau_1$ and $\tau_2$ (as above) leads to the bound
\begin{align*}
\int_{B \cap W} |\fE_r f_{\tau_1,B}^\flat|^2|\fE_r f_{\tau_2,B}^\flat|^2 \lesssim R^{O(\delta)-1/2}\bigg(\sum_{T_1 \in \bbT_B^\flat}\|f_{\tau_1,T_1}\|_2^2\bigg)\bigg(\sum_{T_2 \in \bbT_B^\flat}\|f_{\tau_2,T_2}\|_2^2\bigg) + \negl.
\end{align*}
By properties (i) and (iv) of Proposition \ref{prop3.1}, the functions $f_{\tau,T}$ are nearly orthogonal  and we have
\begin{align*}
\sum_{T \in \bbT_B^\flat}\|f_{\tau,T}\|_2^2 \lesssim \|f_{\tau,B}^\flat\|_2^2+\negl
\end{align*}
for every $\tau$.  Thus, altogether, 
\begin{align*}
\int_{B \cap W} |\fE_r f_{\tau_1,B}^\flat|^2|\fE_r f_{\tau_2,B}^\flat|^2 \lesssim R^{O(\delta)-1/2}\|f_{\tau_1,B}^\flat\|_2^2\|f_{\tau_2,B}^\flat\|_2^2 + \negl,
\end{align*}
and consequently by H\"older's inequality,
\begin{align*}
\|\Bil(\fE_r f_B^\flat)\|_{L^4(B \cap W)} \lesssim R^{O(\delta)-1/8}\bigg(\sum_{\tau \in \fS_r}\|f_{\tau,B}^\flat\|_2^2\bigg)^{1/2} + \negl.
\end{align*}
The well-known estimate
\begin{align*}
\|\fE_rg\|_{L^2(B_R)} \lesssim R^{1/2}\|g\|_2
\end{align*}
(which is a consequence of Plancherel's theorem for the spatial Fourier transform), together with H\"older's inequality, implies that
\begin{align*}
\|\Bil(\fE_rf_B^\flat)\|_{L^2(B \cap W)} \lesssim R^{1/2}\bigg(\sum_{\tau \in \fS_r}\|f_{\tau,B}^\flat\|_2^2\bigg)^{1/2}.
\end{align*}
Hence, by interpolation,
\begin{align}\label{ex4.22}
\int_{B \cap W}\Bil(\fE_r f_B^\flat)^p \lesssim R^{O(\delta) + \frac{5}{2}-\frac{3p}{4}}\bigg(\sum_{\tau \in \fS_r}\|f_{\tau,B}^\flat\|_2^2\bigg)^{p/2} + \negl
\end{align}
for $p \in [2,4]$.  Now, on one hand, $\|f_{\tau,B}^\flat\|_2 \lesssim \|f_\tau\|_2$ by Lemma \ref{lem3.2}.  On the other hand, Lemma \ref{lem4.3} gives a different bound:  There are at most $R^{O(\delta)+1/2}$ discs $\theta \in \Theta$ such that $\bbT_B^\flat \cap \bbT(\theta) \neq \emptyset$.  By property (i) of Proposition \ref{prop3.1}, each $f_{\tau,B}^\flat$ is therefore supported in $R^{O(\delta)+1/2}$ discs $\theta$, on each of which we have the bound
\begin{align*}
\int_\theta |f_{\tau,B}^\flat|^2 \lesssim \int_{10\theta}|f_\tau|^2 \lesssim R^{-1},
\end{align*}
by Lemma \ref{lem3.2} and \eqref{ex2.1}.  Thus, $\|f_{\tau,B}^\flat\|_2 \lesssim R^{O(\delta)-1/4}$.  Combining these two estimates gives $\|f_{\tau,B}^\flat\|_2 \lesssim \|f_\tau\|_2^{{3/p}}R^{O(\delta)-\frac{1}{4}(1-\frac{3}{p})}$ for $p \geq 3$.  Plugging this bound into \eqref{ex4.22} yields
\begin{align*}
\int_{B \cap W}\Bil(\fE_rf_B^\flat)^p \lesssim R^{O(\delta) + \frac{13}{4}-p}\bigg(\sum_{\tau \in \fS_r}\|f_\tau\|_2^2\bigg)^{3/2},
\end{align*}
and then taking $p = 13/4$ completes the proof of Proposition \ref{prop4.7}.

To conclude the article, we rigorously prove the convolution estimate used in the proof of Lemma \ref{lem4.8}.  This standard argument is sketched in \cite{Guth}; we fill in the details here.
\begin{lemma}\label{lem4.12}
If $\tau_1,\tau_2 \in \fS_r$ are separated squares and $T_1,T_2 \in \bbT$ are such that $3\theta(T_i) \cap \tau_i \neq \emptyset$, then
\begin{align*}
\int_{\R^3}|d\sigma_{\tau_1,T_1} \ast d\sigma_{\tau_2,T_2}|^2 \lesssim R^{-1/2}\|f_{\tau_1,T_1}\|_2\|f_{\tau_2,T_2}\|_2,
\end{align*}
where $d\sigma_{\tau_i,T_i}$ is given by \eqref{measure}.
\end{lemma}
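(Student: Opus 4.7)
The plan is to compute $d\sigma_{\tau_1,T_1} \ast d\sigma_{\tau_2,T_2}$ as an absolutely continuous density on $\R^3$ and bound its $L^2$ norm directly, using the transversality furnished by the separation of $\tau_1$ and $\tau_2$. First, I would unpack the convolution. Starting from the definition \eqref{measure}, testing against a function $g$ gives
\[
\int g\, d(d\sigma_{\tau_1,T_1} \ast d\sigma_{\tau_2,T_2}) = \iint g(\phi_r(\xi)+\phi_r(\zeta),\, \xi+\zeta)\, f_{\tau_1,T_1}(\xi)\, f_{\tau_2,T_2}(\zeta)\, d\xi\, d\zeta.
\]
Substituting $\zeta = \eta - \xi$ and invoking the coarea formula to resolve the $\eta_0$ delta, I would obtain the pointwise expression
\[
(d\sigma_{\tau_1,T_1} \ast d\sigma_{\tau_2,T_2})(\eta_0,\eta) = \int_{\Gamma(\eta_0,\eta)} \frac{f_{\tau_1,T_1}(\xi)\, f_{\tau_2,T_2}(\eta-\xi)}{|\nabla\phi_r(\xi) - \nabla\phi_r(\eta-\xi)|}\, d\mathcal{H}^1(\xi),
\]
where $\Gamma(\eta_0,\eta) := \{\xi \in \R^2 : \phi_r(\xi) + \phi_r(\eta-\xi) = \eta_0\}$.

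Next, I would extract the key transversality from the separation condition. Because $3\theta(T_i) \cap \tau_i \neq \emptyset$ and $f_{\tau_i,T_i}$ is essentially supported in $3\theta(T_i)$, a verbatim repetition of the lower bound already proved inside Lemma \ref{lem4.8} (via part (b) of Lemma \ref{lem4.4*}, the separation of $\tau_1,\tau_2$, and a sufficiently large $C_0$) yields $|\nabla\phi_r(\xi) - \nabla\phi_r(\eta-\xi)| \gtrsim 1$ uniformly for $\xi \in \supp f_{\tau_1,T_1}$ and $\eta-\xi \in \supp f_{\tau_2,T_2}$. The matching upper bound $|\nabla\phi_r(\xi) - \nabla\phi_r(\eta-\xi)| \lesssim 1$ is immediate from smoothness of $\phi_r$ on $U$. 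In particular, $\Gamma(\eta_0,\eta)$ is a smooth curve in the relevant region, and the coarea formula is available to be inverted.

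Then I would apply the Cauchy--Schwarz inequality on the curve $\Gamma(\eta_0,\eta)$. Since $\supp f_{\tau_1,T_1} \subseteq 3\theta(T_1)$ is a disc of radius $O(R^{-1/2})$, the portion of $\Gamma(\eta_0,\eta)$ where the integrand does not vanish has $\mathcal{H}^1$-measure $\lesssim R^{-1/2}$. Combined with the Jacobian bounds from the previous step, this gives
\[
|(d\sigma_{\tau_1,T_1} \ast d\sigma_{\tau_2,T_2})(\eta_0,\eta)|^2 \lesssim R^{-1/2}\int_{\Gamma(\eta_0,\eta)} |f_{\tau_1,T_1}(\xi)|^2\, |f_{\tau_2,T_2}(\eta-\xi)|^2\, d\mathcal{H}^1(\xi).
\]

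Finally, I would integrate in $(\eta_0,\eta)$. Reversing the coarea formula (absorbing the resulting weight with the upper Jacobian bound) and then making the change of variables $\zeta = \eta - \xi$ yields
\[
\int_{\R^3} |d\sigma_{\tau_1,T_1} \ast d\sigma_{\tau_2,T_2}|^2 \lesssim R^{-1/2} \iint |f_{\tau_1,T_1}(\xi)|^2\, |f_{\tau_2,T_2}(\zeta)|^2\, d\xi\, d\zeta = R^{-1/2}\|f_{\tau_1,T_1}\|_2^2\|f_{\tau_2,T_2}\|_2^2,
\]
matching the bound used in \eqref{ex4.20} (the stated lemma with unsquared $L^2$ norms appears to be a typographical error, as the squared version is what is invoked in the proof of Lemma \ref{lem4.8}). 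The main obstacle is the transversality step: one must ensure $|\nabla\phi_r(\xi) - \nabla\phi_r(\eta-\xi)| \gtrsim 1$ with a constant independent of $r$, and this is the sole place the hyperbolic geometry (encoded in part (b) of Lemma \ref{lem4.4*}) and the precise separation constant $C_0$ enter. Everything else is standard bookkeeping with the coarea formula.
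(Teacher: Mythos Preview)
Your argument is correct and yields the squared bound $\int |d\sigma_{\tau_1,T_1} \ast d\sigma_{\tau_2,T_2}|^2 \lesssim R^{-1/2}\|f_{\tau_1,T_1}\|_2^2\|f_{\tau_2,T_2}\|_2^2$; you are right that the unsquared norms in the statement are a typo, as the squared version is both what the paper's own proof establishes and what is invoked after \eqref{ex4.20}.

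The route, however, differs from the paper's. The paper argues by duality: it bounds the pairing $\langle d\sigma_{\tau_1,T_1} \ast d\sigma_{\tau_2,T_2}, \psi\rangle$ by foliating $3\theta_1$ into line segments $\overline\gamma_s$ whose lifts $\gamma_s$ to $\Sigma_r$ make a definite angle with the tangent plane to $\Sigma_r$ above $c_{\theta_2}$, writes $d\sigma_{\tau_1,T_1} = \int d\nu_s\,ds$, and then shows that the map $G(\zeta,t) = (\phi_r(\zeta),\zeta) + \gamma_s(t)$ from $3\theta_2\times I$ to $\R^3$ is a diffeomorphism with $|\det\nabla G| \gtrsim 1$. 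Your coarea approach bypasses both the duality and the explicit foliation, expressing the convolution density directly as a curve integral and applying Cauchy--Schwarz on the fiber. The underlying geometry is identical---your Jacobian lower bound $|\nabla\phi_r(\xi)-\nabla\phi_r(\eta-\xi)|\gtrsim 1$ is equivalent to the paper's angle bound $\angle(n_1,n_2)\gtrsim 1$, and both stem from Lemma~\ref{lem4.4*}(b) together with the separation hypothesis. Your version is arguably more streamlined; the paper's version makes the three-dimensional change of variables more explicit. One small point worth spelling out in your write-up: the level curve $\Gamma(\eta_0,\eta)$ meets the disc $3\theta_1$ in a \emph{single} arc of length $O(R^{-1/2})$, which follows from the quantitative implicit function theorem (uniform $C^2$ bounds on $\phi_r$ and $|\nabla\Psi|\sim 1$) once $R$ is large; this is what justifies the Cauchy--Schwarz factor.
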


\begin{proof}
Let $\theta_i := \theta(T_i)$ and $c_i := c_{\theta_i}$.  Since $3\theta_i \cap \tau_i \neq \emptyset$, we have $c_i \in 2m\tau_i$, and consequently, $|\nabla\phi_r(c_1) - \nabla\phi_r(c_2)| \gtrsim 1$ by the separation of $\tau_1$ and $\tau_2$.  Indeed, by the Cauchy--Schwarz inequality, boundedness of $\|(\nabla^2\phi)^{-1}\|$ on $U$, and part (b) of Lemma \ref{lem4.4*},
\begin{align*}
|\nabla\phi_r(c_1) - \nabla\phi_r(c_2)| &= r^{-1}|\nabla\phi(rc_1)-\nabla\phi(rc_2)|\\
 &\gtrsim r^{-1}|\langle(\nabla^2\phi(rc_1))^{-1}(\nabla\phi(rc_1)-\nabla\phi(rc_2)),\nabla\phi(rc_1)-\nabla\phi(rc_2)\rangle|^{1/2}\\
 &\sim r^{-1}\ldist(rc_1,rc_2)\\
 &\gtrsim 1.
\end{align*}
It follows (from the law of sines, say) that the unit normal vectors $n_1 := v_{\theta_1}$ and $n_2 := v_{\theta_2}$ satisfy $\angle(n_1,n_2) \gtrsim 1$.  Using this angle bound, we will foliate $3\theta_1$ by lines whose lifts to $\Sigma_r$ are transverse to the tangent plane $T_{(\phi_r(c_2),c_2)}\Sigma_r$ above $c_2$.  Define the direction set
\begin{align*}
V := \{\omega \in \mathbb{S}^2 : \omega \cdot n_1 = 0~\text{and}~|\omega \cdot n_2| \geq c\},
\end{align*}
where $c > 0$.  If $c$ is sufficiently small relative to $\angle(n_1,n_2)$, then $V$ is nonempty.  Choose $\omega \in V$, let $\overline{\omega} := (\omega_2,\omega_3)$, and let $S$ be the rotation of $\R^2$ satisfying $S(0,1) = \overline{\omega}/|\overline{\omega}|$ (note that $\overline{\omega} \neq 0$).  Define the lines $\overline{\gamma}_s$ by
\begin{align*}
\overline{\gamma}_s(t) := S(s,t) + c_1,
\end{align*}
and note that $\supp d\sigma_{\tau_1,T_1} \subseteq 3\theta_1 \subseteq \{\overline{\gamma}_s(t) : (s,t) \in I^2\}$, where $I := [-3R^{-1/2},3R^{-1/2}]$.  The lift of $\overline{\gamma}_s$ to $\Sigma_r$ is given by
\begin{align*}
\gamma_s(t) := (\phi_r(\overline{\gamma}_s(t)), \overline{\gamma}_s(t))
\end{align*}
for $s,t$ small.  For almost every $s$, the function $t \mapsto f_{\tau_1,T_1}(\overline{\gamma}_s(t))$ is measurable and
\begin{align*}
\int_{\gamma_s}g d\nu_s := \int_I g(\gamma_s(t))f_{\tau_1,T_1}(\overline{\gamma}_s(t))dt
\end{align*}
defines a measure $d\nu_s$ on $\gamma_s$.  Using \eqref{measure}, an easy calculation shows that $d\sigma_{\tau_1,T_1} = d\nu_s \chi_Ids$.

Now, to prove the required convolution estimate, it suffices to show that
\begin{align*}
|\langle d\sigma_{\tau_1,T_1} \ast d\sigma_{\tau_2,T_2}, \psi \rangle| \lesssim R^{-1/4}\|f_{\tau_1,T_1}\|_2\|f_{\tau_2,T_2}\|_2\|\psi\|_2
\end{align*}
for all $\psi \in C_c^\infty(\R^3)$; the brackets denote the pairing between distributions and test functions.  We compute that
\begin{align*}
|\langle d_{\tau_1,T_1} \ast d\sigma_{\tau_2,T_2}, \psi\rangle| &= \bigg\vert\int_{\Sigma_r}\int_{\Sigma_r}\psi(\sigma+\tau, \zeta+\xi) d\sigma_{\tau_2,T_2}(\sigma,\zeta) d\sigma_{\tau_1,T_1}(\tau,\xi)\bigg\vert\\
&= \bigg\vert \int_I\int_{\gamma_s}\int_{\Sigma_r}\psi(\sigma+\tau,\zeta+\xi)d\sigma_{\tau_2,T_2}(\sigma,\zeta)d\nu_s(\tau,\xi) ds\bigg\vert\\
&\lesssim R^{-1/4}\bigg(\int_I\bigg\vert\int_{\gamma_s}\int_{\Sigma_r}\psi(\sigma+\tau,\zeta+\xi)d\sigma_{\tau_2,T_2}(\sigma,\zeta)d\nu_s(\tau,\xi)\bigg\vert^2 ds\bigg)^{1/2}.
\end{align*}
Using the definitions of $d\sigma_{\tau_2,T_2}$ and $d\nu_s$ and the Cauchy--Schwarz inequality, the quantity between absolute value signs is at most
\begin{align*}
\|f_{\tau_2,T_2}\|_2\bigg(\int_I\int_{3\theta_2}|\psi((\phi_r(\zeta),\zeta)+\gamma_s(t))|^2d\zeta dt\bigg)^{1/2}\bigg(\int_I |f_{\tau_1,T_1}(\overline{\gamma}_s(t))|^2dt\bigg)^{1/2}.
\end{align*}
Thus, if we can show that
\begin{align*}
\int_I\int_{3\theta_2}|\psi((\phi_r(\zeta),\zeta)+\gamma_s(t))|^2d\zeta dt \lesssim \|\psi\|_2^2,
\end{align*}
then a simple change of variable, using the definition of $\overline{\gamma}_s$, gives the required  estimate.

Toward that end, let $G(\zeta,t) := (\phi_r(\zeta),\zeta) +  \gamma_s(t)$.  We claim that $G$ is invertible on $3\theta_2 \times I$, provided $R$ is sufficiently large.  The definition of $S$ implies that $\overline{\gamma}_s'(t) = \overline{\omega}/|\overline{\omega}|$ for every $s,t$.  Thus, the Jacobian of $G$ at $(c_2,0)$ is given by
\begin{align*}
\nabla G(c_2,0) = \left(\begin{array}{ccc}
\partial_1\phi_r(c_2) & \partial_2\phi_r(c_2) & \nabla\phi_r(\gamma_s(0))\cdot \overline{\omega}/|\overline{\omega}|\\ 1 & 0 &\omega_2/|\overline{\omega}| \\ 0 & 1 & \omega_3/|\overline{\omega}|\end{array}\right).
\end{align*}
The first two columns of this matrix are orthogonal to $n_2$.  If we replace $\gamma_s(0)$ by $c_1$, then the third column becomes $\omega/|\overline{\omega}|$, since $\omega \cdot n_1 = 0$.  The angle between $\omega$ and the orthogonal complement of $n_2$ is bounded below, since $|\omega\cdot n_2| \geq c$.   Combining these observations, we see that
\begin{align*}
|\det \nabla G(c_2,0)| = \frac{1}{|\overline{\omega}|}\left\vert\det\left(\begin{array}{ccc}
\partial_1\phi_r(c_2) & \partial_2\phi_r(c_2) & \omega_1\\ 1 & 0 &\omega_2 \\ 0 & 1 & \omega_3\end{array}\right)\right\vert + O(R^{-1/2}) \gtrsim 1.
\end{align*}
Thus, the inverse function theorem implies that $G$ is invertible on $3\theta_2 \times I$, if $R$ is sufficiently large. (The meaning of ``sufficiently large" does not depend on $r$ or $s$, since the bounds $\|\nabla G(c_2,0)\| \sim 1$ and $\|(\nabla G(c_2,0))^{-1}\| \sim 1$ hold uniformly in these parameters.)  Additionally, the bound $|\det\nabla G(\zeta,t)| \gtrsim 1$ holds on $3\theta_2 \times I$, so we obtain
\begin{align*}
\int_I\int_{3\theta_2}|\psi((\phi_r(\zeta),\zeta)+\gamma_s(t))|^2d\zeta dt = \iint_{G(3\theta_2\times I)}|\psi(\eta)|^2|\det\nabla G^{-1}(\eta)|d\eta \lesssim \|\psi\|_2^2,
\end{align*}
completing the proof.
\end{proof}

\end{document}